\pgfplotsset{compat=1.15}
\definecolor{darkbrown}{rgb}{0.57, 0.40, 0.13}
\newcommand{\myspace}[1]{\mathbb{#1}}
\newcommand{\myvec}[1]{\mathfrak{#1}}          
\newcommand{\mymatrix}[1]{\boldsymbol{#1}}     
\newtheorem{theorem}{Theorem}
\newtheorem{corollary}[theorem]   {Corollary}
\newtheorem{lemma}[theorem]       {Lemma}
\theoremstyle{definition}
\newtheorem*{remark*}	{Remark}
\begin{document}

\title[Mixed Finite Elements of Higher-Order in Elastoplasticity]{Mixed Finite Elements of Higher-Order in Elastoplasticity}

\author[P.~Bammer, L.~Banz \and A.~Schr\"{o}der]{Patrick Bammer, Lothar Banz \and Andreas Schr\"{o}der}

\address{Fachbereich Mathematik, Paris Lodron Universit\"{a}t Salzburg,\\ Hellbrunner~Str.~14, 5020 Salzburg, Austria}

\begin{abstract}
In this paper a higher-order mixed finite element method for elastoplasticity with linear kinematic hardening is analyzed. Thereby, the non-differentiability of the involved plasticity functional is resolved by a Lagrange multiplier leading to a three field formulation. The finite element discretization is conforming in the displacement field and the plastic strain but potentially non-conforming in the Lagrange multiplier as its Frobenius norm is only constrained in a certain set of Gauss quadrature points. A discrete inf-sup condition with constant $1$ and the well posedness of the discrete mixed problem are shown. Moreover, convergence and guaranteed convergence rates are proved with respect to the mesh size and the polynomial degree, which are optimal for the lowest order case. Numerical experiments underline the theoretical results.
\end{abstract}

\keywords{elastoplasticity, variational inequality of the second kind, mixed formulation, a priori error analysis, higher-order finite elements.}

\thanks{%
A.~Schr\"{o}der acknowledges the support by the Bundesministerium f\"{u}r Bildung, Wissenschaft und Forschung (BMBWF) under the Sparkling Science project SPA 01-080 'MAJA -- Mathematische Algorithmen für Jedermann Analysiert'.
}

\subjclass[2010]{65N30, 65N50}

\maketitle


\section{Introduction}

Elastoplasticity with hardening appears in many problems of mechanical engineering, for instance in the modeling of the deformation of metal or concrete, see e.g.~\cite{chen1988plasticity}. The holonomic constitutive law represents a well established model for elastoplasticity with linear kinematic hardening and allows for the incremental computation of the deformation of an elastoplastic body, see e.g.~\cite{han1991finite, Han2013}. A (pseudo-)time step of that model can be formulated as a variational inequality of the second kind, in which a non-differentiable plasticity functional $\psi(\cdot)$ appears. The non-smoothness of $\psi(\cdot)$ causes many difficulties not only in the numerical analysis but also in the numerics. The latter include slow convergence of quadrature formulas for the evaluation of $\psi(\cdot)$ and also the need for non-standard and difficult to implement iterative solvers such as the Bundle-Newton method \cite{Luksan1998}. However, the use of such general iterative methods often causes an inefficient computation of a solution of the specific problem. One remedy is to regularize the plasticity functional as proposed in \cite{reddy1987variational} and to apply the standard Newton solver. Often, the required regularization parameters depend on the discretization. Choosing them too large may imply negative effects regarding the absolute value of the discretization error, whereas choosing them too small may significantly enlarge the condition number of the Newton iteration matrix (i.e.~the Jacobian matrix) as well as the number of iteration steps. 

To circumvent these difficulties resulting from the non-smoothness of $\psi(\cdot)$, one may reformulate the variational inequality as a mixed formulation in which the non-differentiability of the plasticity functional is resolved by a Lagrange multiplier, see e.g.~\cite{han1991finite, han1995finite,schroder2011error}, similar to frictional contact problems \cite{schroder2011mixed, zbMATH05872978}. The additional Lagrange multiplier can be discretized by a conforming approach with lowest order finite elements, see e.g.~\cite{schroder2011error}, or by a non-conforming higher-order method \cite{wiedemann2013Adaptive}. The algebraic systems resulting from the discretizations can be solved, for instance, by a simple to implement and super-linearly converging semi-smooth Newton method \cite{Bammer2022Icosahom}. We refer to \cite{brokate2005quasi, christensen2002nonsmooth, carstensen2006reliable, Han2013} for further details on solution schemes in elastoplasticity.

In this paper we provide an a priori error analysis (convergence and guaranteed convergence rates) for higher-order ($h$- and $p$-version) finite element discretizations of a mixed formulation for a model problem of elastoplasticity with linear kinematic hardening. The proposed discretization is defined on quasi-uniform meshes and is conforming with respect to the displacement field and the plastic strain. As the constraints for the Frobenius norm of the discrete Lagrange multiplier are only enforced in some Gauss quadrature points the discretization is non-conforming with respect to the Lagrange multiplier (except for the lowest order case). The non-conformity is necessary to receive an implementable discretization scheme but leads to a reduction of the guaranteed convergence rates by a factor of $2$ compared to the maximum rates enabled by the finite element spaces and compared to the experimental order of convergence. This reduction, however, is common for higher-order mixed methods for variational inequalities, see e.g.~\cite{banz2022priori,banz2019hybridization,ovcharova2017coupling}. 

We note that the solution of the model problem of elastoplacticity with hardening has typically a low regularity with a singularity at the free boundary which separates the regions of purely elastic deformations from the regions of plastic deformations. In order to achieve high convergence rates, a discretization with higher-order finite elements requires a combination of mesh refinements ($h$-refinements) in those regions, where the solution is singular, and an increase of the local polynomial degree ($p$-refinement), where the solution is locally smooth. Such $h$- and $hp$-refinements can be obtained, for instance, by using a posteriori error control and adaptivity. We refer to~\cite{Banz2021abstract, burg2015posteriori} for a posteriori error control and adaptivity for variational inequalities related to contact problems and to \cite{zbMATH06537874} in the context of elastoplasticity. \\
 
The paper is structured as follows: In Section~\ref{BBS_sec:notation} the strong formulation of the model problem of elastoplasticity with linear kinematic hardening as well as its weak formulation as a variational inequality of the second kind are presented. The equivalent mixed formulation and some of its main properties can be found in Section~\ref{BBS_subsec:MVF}. Section~\ref{BBS_sec:discretization} is devoted to the higher-order finite element discretization of the mixed formulation, which is easy to realize as the same mesh for all three variables and even the same basis functions for the discrete plastic strain and the discrete Lagrange multiplier are used. Moreover, a discrete inf-sup condition with constant $1$ and the well posedness of the discrete mixed problem are shown. The main results, namely the convergence of the method (Theorem~\ref{BBS_thm:convergence_mixed_Lambda_hp}) and the guaranteed convergence rates in $h$ and $p$ (Theorem~\ref{BBS_thm:convergence_rates_mixedForm}) can be found in Section~\ref{BBS_sec:a_priori_analysis}. Furthermore, for the lowest order case the optimal convergence rates  are proved (Theorem~\ref{thm:conver_p1}). Finally, the numerical results of Section~\ref{BBS_sec:numerical_results} underline the theoretical findings and, additionally, the potential of $h$- and $hp$-adaptive refinements.

\medskip


\section{The Model Problem and Notation}\label{BBS_sec:notation}

Let $\Omega \subset \myspace{R}^d$ with $d \in \lbrace 2,3 \rbrace$ be a bounded, polygonal domain with Lipschitz-boundary $\Gamma := \partial \Omega$ and outer unit normal $\myvec{n}$ representing the reference configuration of an elastoplastic body. The body is clamped at a Dirchlet boundary part $\Gamma_D \subseteq \Gamma$ of positive surface measure and is subjected to some volume force $\myvec{f}$ and some surface force $\myvec{g}$ on the body and the Neumann boundary part $\Gamma_N := \Gamma \setminus \overline{\Gamma_D}$, respectively. The model problem of elastoplasticity with linear kinematic hardening, see e.g.~\cite{Han2013}, is to find a displacement field $\myvec{u} \in H^1(\Omega,\myspace{R}^d)$ and a plastic strain $\mymatrix{p} \in L^2(\Omega, \myspace{S}_{d,0})$, where
\begin{align*}
 \myspace{S}_{d,0}  := \bigg\lbrace \mymatrix{\tau} \in \myspace{R}^{d \times d}  \; ; \; \mymatrix{\tau} = \mymatrix{\tau}^{\top}, \;\operatorname{tr}(\mymatrix{\tau}) := \sum_{i=1}^d \tau_{ii} = 0 \bigg\rbrace,
\end{align*}
such that there holds
\pagebreak
\begin{subequations}\label{BBS_eq:model_problem}
\begin{alignat}{2}
 -\operatorname{div} \mymatrix{\sigma}(\myvec{u},\mymatrix{p}) &= \myvec{f}   & \quad & \text{in } \Omega, \label{BBS_eq:model_PDE} \\
 \myvec{u} &= \myvec{o}                                            & \quad & \text{on } \Gamma_D, \\
 \mymatrix{\sigma}(\myvec{u},\mymatrix{p}) \, \myvec{n} &= \myvec{g}  & \quad & \text{on } \Gamma_N, \label{BBS_eq:model_boundaryCond} \\
 \mymatrix{\sigma}(\myvec{u},\mymatrix{p})-\myspace{H} \mymatrix{p} &\in \partial j(\mymatrix{p}) & \quad &  \text{in } \Omega. \label{BBS_eq:differntial_inclusion}
\end{alignat}
\end{subequations}
Here, $\mymatrix{\sigma}(\myvec{u},\mymatrix{p}) := \myspace{C}(\mymatrix{\varepsilon}(\myvec{u})-\mymatrix{p})$ is the stress tensor, $\mymatrix{\varepsilon}(\myvec{u}) := \frac{1}{2} \, \big(\nabla\myvec{u} + (\nabla\myvec{u})^{\top} \big)$ is the linearized strain tensor, and $\myspace{C}$ and $\myspace{H}$ denote the fourth-order elasticity and hardening tensor, respectively. We assume that the entries of $\myspace{C}$ and $\myspace{H}$ are in $L^{\infty}(\Omega)$ and that $\myspace{C}$ and $\myspace{H}$ are symmetric and uniformly elliptic, i.e.~$\myspace{C}_{ijkl}=\myspace{C}_{jilk} = \myspace{C}_{kli j}$, $\myspace{H}_{ijkl}=\myspace{H}_{jilk} = \myspace{H}_{klij}$ for all $1\leq i,j,k,l\leq d$ and there exist positive constants $c_e,c_h>0$ such that
\begin{align*}
 (\myspace{C}\mymatrix{\tau}):\mymatrix{\tau} \geq c_e \, \vert\mymatrix{\tau}\vert_F^2 ,
   \quad
 (\myspace{H}\mymatrix{\tau}):\mymatrix{\tau} \geq c_h \, \vert\mymatrix{\tau}\vert_F^2 \qquad 
 \forall \,\mymatrix{\tau}\in L^2(\Omega,\myspace{S}_d).
\end{align*}
As usual, $\vert\mymatrix{\tau}\vert_F^2 = \mymatrix{\tau} : \mymatrix{\tau}$ stands for the Frobenius norm squared and $:$ for the Frobenius inner product. Furthermore, $\partial j(\cdot)$ represents the subdifferential of the non-differentiable part of the plastic dissipation functional $j(\cdot)$, which is given by $j(\mymatrix{q}) := \sigma_y \, \vert\mymatrix{q}\vert_F$ with a constant yield stress in uniaxial tension $\sigma_y>0$. 

\begin{remark*}
Replacing the plastic strain $\mymatrix{p}$ in $\partial j(\mymatrix{p})$ in the line \eqref{BBS_eq:differntial_inclusion} by its time derivative $\dot{\mymatrix{p}} := \frac{\partial \mymatrix{p}}{\partial t}$ the formulation \eqref{BBS_eq:model_problem} describes one time step for a quasi-static elastoplasticity problem with hardening and initial conditions $\mymatrix{p}_0 := \mymatrix{0}$, see \cite{carstensen2006reliable, han1991finite, Han2013}. In fact, by updating the right hand side data, the formulation of any time step takes the form \eqref{BBS_eq:model_problem}.
\end{remark*}

Here and in the following we use standard letters such as $v$ for scalar functions, fraktur font $\myvec{v}$ for vector valued functions and bold font $\mymatrix{v}$ for matrix valued functions. Moreover, capital letters indicate vector spaces and subsets of them. We define
\begin{align*}
 V := \Big\lbrace \myvec{v} \in H^1(\Omega, \myspace{R}^d) \; ; \; \myvec{v}_{\, | \, \Gamma_D} = \myvec{o} \Big\rbrace,
 \qquad \qquad
 Q := L^2(\Omega,\myspace{S}_{d,0})
\end{align*}
and note that Korn's inequality allows equipping $V$ with the norm $\Vert \myvec{v}\Vert_{1,\Omega} := \big(\Vert \myvec{v} \Vert_{0,\Omega}^2 + |\myvec{v}|_{1,\Omega}^2\big)^{1/2}$ where $|\myvec{v}|_{1,\Omega}^2 := \big(\mymatrix{\varepsilon}(\myvec{v}),\mymatrix{\varepsilon}(\myvec{v})\big)_{0,\Omega}$. Thereby, we denote the $L^2(\Omega)$-inner product by $(\cdot,\cdot)_{0,\Omega}$ and its induced $L^2(\Omega)$-norm by $\Vert \cdot \Vert_{0,\Omega}$ (for scalar, vector as well as matrix valued functions). Likewise, we denote the duality pairing between the involved Hilbert spaces by $\langle\cdot,\cdot\rangle$, where an addition index can be used to emphasize the associated domain. Moreover, $\widetilde{H}^{1/2}(\Gamma_N,\myspace{R}^d)$ is the trace space of $V$ restricted to the boundary part $\Gamma_N$ equipped with the standard trace norm $\Vert\cdot\Vert_{\frac{1}{2},\Gamma_N}$, and $H^{-1/2}(\Gamma_N,\myspace{R}^d)$ is its dual space with the dual norm $\Vert\cdot\Vert_{-\frac{1}{2},\Gamma_N}$. Recall that by the trace theorem there holds
\begin{align*}
 \Vert \myvec{v}\Vert_{\frac{1}{2}, \Gamma_N}
 \leq c_{tr} \, \Vert \myvec{v}\Vert_{1,\Omega}
 \qquad \forall \, \myvec{v}\in V
\end{align*}
for some constant $c_{tr}>0$.
The deviatoric part of $\mymatrix{\tau}\in\myspace{R}^{d\times d}$ is given by $\operatorname{dev}(\mymatrix{\tau}) := \mymatrix{\tau} - \frac{1}{d}\operatorname{tr}(\mymatrix{\tau}) \, \mymatrix{I}$ where $\mymatrix{I} \in \myspace{R}^{d\times d}$ is the identity matrix.
From $\operatorname{tr}(\mymatrix{q}) = 0$ for any $\mymatrix{q}\in Q$ we obtain
\begin{align}\label{BBS_eq:identity_deviatoricPart}
 \big( \operatorname{dev}(\mymatrix{\mu}), \mymatrix{q} \big)_{0,\Omega}
 = ( \mymatrix{\mu}, \mymatrix{q})_{0,\Omega} - \frac{1}{d} \, \big( \operatorname{tr}(\mymatrix{\mu}) \, \mymatrix{I}, \mymatrix{q} \big)_{0,\Omega}
 = ( \mymatrix{\mu}, \mymatrix{q})_{0,\Omega} - \frac{1}{d} \, \big( \operatorname{tr}(\mymatrix{\mu}), \operatorname{tr}(\mymatrix{q}) \big)_{0,\Omega}
 = ( \mymatrix{\mu}, \mymatrix{q})_{0,\Omega}
\end{align}
for all $\mymatrix{q}\in Q$ and $\mymatrix{\mu}\in L^2(\Omega,\myspace{R}^{d\times d})$.

It is well known, see e.g.~\cite{carstensen1999numerical, han1991finite, Han2013}, that for $\myvec{f}\in V^*$ and $\myvec{g}\in H^{-1/2}(\Gamma_N,\myspace{R}^d)$ the problem \eqref{BBS_eq:model_problem} has a weak formulation in the form of a variational inequality of the second kind. 
For this purpose, let the bilinear form $a(\cdot,\cdot)$, the plasticity functional $\psi(\cdot)$ and the linear form $\ell(\cdot)$ be given by
\begin{align*}
 a\big( (\myvec{u},\mymatrix{p}),(\myvec{v},\mymatrix{q}) \big) 
 &:= \big( \mymatrix{\sigma}(\myvec{u},\mymatrix{p}), \mymatrix{\varepsilon}(\myvec{v})-\mymatrix{q} \big)_{0,\Omega} + (\myspace{H} \mymatrix{p}, \mymatrix{q})_{0,\Omega},\\
 \psi(\mymatrix{q}) 
 &:=(\sigma_y,\vert \mymatrix{q}\vert_F)_{0,\Omega},\\
 \ell(\myvec{v}) 
 &:= \langle \myvec{f},\myvec{v} \rangle_{\Omega} + \langle \myvec{g},\myvec{v} \rangle_{\Gamma_N},
\end{align*}
respectively. Then, the variational inequality problem is to find a pair $(\myvec{u},\mymatrix{p})\in V\times Q$ such that
\begin{align}\label{BBS_eq:variq_second_kind}
 a\big( (\myvec{u},\mymatrix{p}), (\myvec{v}-\myvec{u},\mymatrix{q}-\mymatrix{p}) \big) + \psi(\mymatrix{q}) - \psi(\mymatrix{p}) 
 \geq \ell(\myvec{v}-\myvec{u}) \qquad 
 \forall\, (\myvec{v},\mymatrix{q})\in V\times Q.
\end{align}
Equipping the Hilbert space $V\times Q$ with the norm $\Vert (\myvec{v},\mymatrix{q}) \Vert^2 := \Vert \myvec{v}\Vert_{1,\Omega}^2 + \Vert \mymatrix{q}\Vert_{0,\Omega}^2$ for some $(\myvec{v},\mymatrix{q})\in V\times Q$ it is easy to see, cf.~\cite{Han2013}, that the bilinear form $a(\cdot,\cdot)$ is symmetric, continuous and $(V\times Q)$-elliptic, i.e.~there exist positive constants $c_a$, $\alpha>0$ such that
\begin{align*}
 a\big( (\myvec{u},\mymatrix{p}),(\myvec{v},\mymatrix{q}) \big) 
 \leq c_a \, \Vert (\myvec{u},\mymatrix{p})\Vert \, \Vert (\myvec{v},\mymatrix{q}) \Vert,
  \quad
 \alpha \, \Vert (\myvec{v},\mymatrix{q})\Vert^2 
 \leq a\big( (\myvec{v},\mymatrix{q}),(\myvec{v},\mymatrix{q}) \big) \qquad 
 \forall \, (\myvec{u},\mymatrix{p}), (\myvec{v},\mymatrix{q})\in V\times Q.
\end{align*}
Obviously, the plasticity functional $\psi(\cdot)$ is convex and Lipschitz continuous as
\begin{align*}
 | \psi(\mymatrix{p}) - \psi(\mymatrix{q}) | 
 \leq \Vert\sigma_y\Vert_{0,\Omega} \, \big\Vert \, |\mymatrix{p}|_F- |\mymatrix{q}|_F \big\Vert_{0,\Omega}
 \leq \Vert\sigma_y \Vert_{0,\Omega} \, \big\Vert \, |\mymatrix{p}- \mymatrix{q}|_F \big\Vert_{0,\Omega} 
 = \Vert\sigma_y \Vert_{0,\Omega} \, \Vert \mymatrix{p}-\mymatrix{q}\Vert_{0,\Omega} \qquad 
 \forall\, \mymatrix{p},\mymatrix{q}\in Q.
\end{align*}
By using the Cauchy-Schwarz inequality it is easy to verify that
\begin{align*}
 \psi(\mymatrix{q}) \geq \psi(\mymatrix{p}) + ( \widetilde{\mymatrix{q}}, \mymatrix{q} - \mymatrix{p} )_{0,\Omega} \qquad 
 \forall \, \mymatrix{p},\mymatrix{q}\in Q
\end{align*}
with the function $\widetilde{\mymatrix{q}} \in Q$ defined as
\begin{align*}
 \widetilde{\mymatrix{q}} := \begin{cases} 
 \sigma_y \, \vert\mymatrix{p}\vert_F^{-1} \, \mymatrix{p}, & \text{wherever } \mymatrix{p} \neq \mymatrix{0}, \\ 
 \mymatrix{0}, & \text{elsewhere} . \end{cases}
\end{align*}
Hence, $\psi(\cdot)$ is subdifferentiable. It is well known, see e.g.~\cite{Han2013}, that the variational inequality \eqref{BBS_eq:variq_second_kind} is equivalent to the strictly convex but non-smooth minimization problem: Find a pair $(\myvec{u},\mymatrix{p})\in V\times Q$ such that
\begin{align}\label{BBS_eq:minimization_problem}
 \mathcal{E}(\myvec{u},\mymatrix{p}) 
 \leq \mathcal{E}(\myvec{v},\mymatrix{q}) \qquad 
 \forall \, (\myvec{v},\mymatrix{q})\in V\times Q
\end{align}
with the energy functional $\mathcal{E} : V\times Q \rightarrow \myspace{R}$ given by
\begin{align*}
 \mathcal{E}(\myvec{v},\mymatrix{q}) 
 := \frac{1}{2} \, a\big( (\myvec{v},\mymatrix{q}),(\myvec{v},\mymatrix{q}) \big) + \psi(\mymatrix{q}) - \ell(\myvec{v}).
\end{align*}

\medskip


\section{A Mixed Variational Formulation}\label{BBS_subsec:MVF}

As $\mathcal{E}(\cdot)$ is coercive, convex and weakly lower semicontinuous the minimization problem \eqref{BBS_eq:minimization_problem} and, thus, the variational inequality \eqref{BBS_eq:variq_second_kind} have a unique solution, see~\cite{Han2013}. In order to deal with the non-differentiability of $\psi(\cdot)$ we introduce a Lagrange multiplier resolving the Frobenius norm in the definition of the plasticity functional $\psi(\cdot)$. To this end, we consider the following non-empty, closed and convex set
\begin{align*}
 \Lambda 
 := \big\lbrace \mymatrix{\mu}\in Q \; ; \; \vert \mymatrix{\mu}\vert_F \leq \sigma_y \text{ a.e.~in } \Omega \big\rbrace.
\end{align*}
For any $\mymatrix{\mu}\in \Lambda$ the Cauchy-Schwarz inequality yields
\begin{align}\label{BBS_eq:mu_in_LambdaW}
 (\mymatrix{\mu},\mymatrix{q})_{0,\Omega} 
 \leq \int_{\Omega} \vert\mymatrix{\mu} \vert_F \, \vert \mymatrix{q} \vert_F \mathrm{\, d} \myvec{x}  
 \leq \int_{\Omega} \sigma_y \, \vert \mymatrix{q}\vert_F \mathrm{\, d} \myvec{x}
 = (\sigma_y, \vert\mymatrix{q}\vert_F)_{0,\Omega} \qquad 
 \forall \, \mymatrix{q}\in Q.
\end{align}
Conversely, for any $\mymatrix{\mu}\in Q$ satisfying \eqref{BBS_eq:mu_in_LambdaW} let $\overline{\mymatrix{q}} :=  \vert\mymatrix{\mu}\vert_F^{-1} \mymatrix{\mu} $ on $N_{\mymatrix{\mu}} := \big\lbrace \myvec{x}\in\Omega \; ; \; \vert \mymatrix{\mu}(\myvec{x})\vert_F > \sigma_y \big\rbrace$ and $\overline{\mymatrix{q}} := \mymatrix{0}$ elsewhere. Then, $\overline{\mymatrix{q}}\in Q$ and the inequality \eqref{BBS_eq:mu_in_LambdaW} immediately leads to
\begin{align*}
 0 
 \leq \int_{\Omega} \sigma_y \, \vert \overline{\mymatrix{q}}\vert_F - \mymatrix{\mu} : \overline{\mymatrix{q}} \mathrm{\, d} \myvec{x}
 = \int_{N_{\mymatrix{\mu}}} \underbrace{\sigma_y - \vert \mymatrix{\mu}\vert_F}_{<0} \mathrm{\, d} \myvec{x},
\end{align*}
which implies that $N_{\mymatrix{\mu}}$ has to be a set of measure zero. Thus, $\Lambda$ can alternatively be written as
\begin{align}\label{BBS_eq:characterization_Lambda}
 \Lambda 
 = \big\lbrace \mymatrix{\mu}\in Q \; ; \; (\mymatrix{\mu},\mymatrix{q})_{0,\Omega} \leq \psi(\mymatrix{q}) \text{ for all } \mymatrix{q}\in Q \big\rbrace
\end{align}
and we easily find the relation
\begin{align} \label{eq:sup_def_psi}
 \psi(\mymatrix{q}) 
 = \sup_{\mymatrix{\mu}\in\Lambda} (\mymatrix{\mu},\mymatrix{q})_{0,\Omega}.
\end{align}
Thereby, a mixed variational formulation to \eqref{BBS_eq:model_problem} is to find a triple $(\myvec{u},\mymatrix{p},\mymatrix{\lambda})\in V\times Q\times \Lambda$ such that
\begin{subequations}\label{BBS_eq:mixed_variationalF}
\begin{alignat}{2}
 a\big( (\myvec{u},\mymatrix{p}),(\myvec{v},\mymatrix{q}) \big) + (\mymatrix{\lambda}, \mymatrix{q})_{0,\Omega} &= \ell(\myvec{v}) & \qquad & \forall \, (\myvec{v},\mymatrix{q}) \in V \times Q, \label{BBS_eq:mixed_variationalF_01}\\
 (\mymatrix{\mu}-\mymatrix{\lambda}, \mymatrix{p})_{0,\Omega} &\leq 0 & \quad & \forall \, \mymatrix{\mu} \in \Lambda . \label{BBS_eq:mixed_variationalF_02}
\end{alignat}
\end{subequations}
The existence of a unique solution to \eqref{BBS_eq:mixed_variationalF} is guaranteed by the next result, which shows the equivalence of the mixed variational formulation \eqref{BBS_eq:mixed_variationalF} to the variational inequality \eqref{BBS_eq:variq_second_kind}.

\begin{theorem}\label{BBS_thm:equivalence_continuous_level}
If $(\myvec{u},\mymatrix{p})\in V\times Q$ solves \eqref{BBS_eq:variq_second_kind}, then $(\myvec{u},\mymatrix{p},\mymatrix{\lambda})$ with $\mymatrix{\lambda} := \operatorname{dev}(\mymatrix{\sigma}(\myvec{u},\mymatrix{p}) - \myspace{H}\mymatrix{p})$ is a solution of \eqref{BBS_eq:mixed_variationalF}. Conversely, if $(\myvec{u},\mymatrix{p},\mymatrix{\lambda})\in V\times Q\times \Lambda$ solves \eqref{BBS_eq:mixed_variationalF}, then $(\myvec{u},\mymatrix{p})$ is a solution of \eqref{BBS_eq:variq_second_kind} and there holds $\mymatrix{\lambda} = \operatorname{dev}(\mymatrix{\sigma}(\myvec{u},\mymatrix{p}) - \myspace{H}\mymatrix{p})$ a.e.~in $\Omega$.
Moreover, in both cases the following complementarity condition is satisfied
\begin{align} \label{eq:complementarityCondition}
 (\mymatrix{\lambda},\mymatrix{p})_{0,\Omega}
 = \psi(\mymatrix{p}).
\end{align}
\end{theorem}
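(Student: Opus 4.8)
The plan is to prove both implications by exploiting the two characterizations of $\Lambda$ in \eqref{BBS_eq:characterization_Lambda} and the dual representation \eqref{eq:sup_def_psi} of $\psi(\cdot)$, together with the deviatoric identity \eqref{BBS_eq:identity_deviatoricPart}. The common mechanism is that the subgradient inequality hidden in \eqref{BBS_eq:variq_second_kind} translates, via \eqref{BBS_eq:identity_deviatoricPart}, into membership of $\mymatrix{\lambda}$ in $\Lambda$ and the complementarity \eqref{eq:complementarityCondition}, and conversely.

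For the forward direction, starting from a solution $(\myvec{u},\mymatrix{p})$ of \eqref{BBS_eq:variq_second_kind}, I would first isolate the displacement equation by testing with $(\myvec{u}\pm\myvec{w},\mymatrix{p})$; since the $\psi$-terms cancel this yields $a\big((\myvec{u},\mymatrix{p}),(\myvec{w},\mymatrix{0})\big)=\ell(\myvec{w})$ for all $\myvec{w}\in V$. Next, testing with $(\myvec{u},\mymatrix{q})$ and rewriting $a\big((\myvec{u},\mymatrix{p}),(\myvec{o},\mymatrix{q}-\mymatrix{p})\big)$ as $-(\mymatrix{\sigma}(\myvec{u},\mymatrix{p})-\myspace{H}\mymatrix{p},\mymatrix{q}-\mymatrix{p})_{0,\Omega}$ and applying \eqref{BBS_eq:identity_deviatoricPart} converts the inequality into the subgradient-type relation $(\mymatrix{\lambda},\mymatrix{q}-\mymatrix{p})_{0,\Omega}\leq\psi(\mymatrix{q})-\psi(\mymatrix{p})$ for all $\mymatrix{q}\in Q$. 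From here everything follows: inserting $\mymatrix{q}=t\mymatrix{r}$ and letting $t\to\infty$, while using the positive one-homogeneity of $\psi(\cdot)$, gives $(\mymatrix{\lambda},\mymatrix{r})_{0,\Omega}\leq\psi(\mymatrix{r})$ for all $\mymatrix{r}\in Q$, hence $\mymatrix{\lambda}\in\Lambda$ by \eqref{BBS_eq:characterization_Lambda}; the two choices $\mymatrix{q}=\mymatrix{0}$ and $\mymatrix{q}=2\mymatrix{p}$ pin down the complementarity \eqref{eq:complementarityCondition}; and combining the displacement equation with \eqref{BBS_eq:identity_deviatoricPart} recovers \eqref{BBS_eq:mixed_variationalF_01}. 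Finally, \eqref{BBS_eq:mixed_variationalF_02} is immediate from $(\mymatrix{\mu},\mymatrix{p})_{0,\Omega}\leq\psi(\mymatrix{p})=(\mymatrix{\lambda},\mymatrix{p})_{0,\Omega}$ for $\mymatrix{\mu}\in\Lambda$.

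For the converse, starting from a solution $(\myvec{u},\mymatrix{p},\mymatrix{\lambda})$ of the mixed problem, I would first extract the complementarity by rewriting \eqref{BBS_eq:mixed_variationalF_02} as $(\mymatrix{\mu},\mymatrix{p})_{0,\Omega}\leq(\mymatrix{\lambda},\mymatrix{p})_{0,\Omega}$, taking the supremum over $\mymatrix{\mu}\in\Lambda$ via \eqref{eq:sup_def_psi} to obtain $\psi(\mymatrix{p})\leq(\mymatrix{\lambda},\mymatrix{p})_{0,\Omega}$, and using $\mymatrix{\lambda}\in\Lambda$ with \eqref{BBS_eq:characterization_Lambda} for the reverse inequality. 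Inserting the test pair $(\myvec{v}-\myvec{u},\mymatrix{q}-\mymatrix{p})$ into \eqref{BBS_eq:mixed_variationalF_01}, substituting the complementarity, and bounding $(\mymatrix{\lambda},\mymatrix{q})_{0,\Omega}\leq\psi(\mymatrix{q})$ produces exactly \eqref{BBS_eq:variq_second_kind}. The identification $\mymatrix{\lambda}=\operatorname{dev}(\mymatrix{\sigma}(\myvec{u},\mymatrix{p})-\myspace{H}\mymatrix{p})$ then follows by testing \eqref{BBS_eq:mixed_variationalF_01} with $(\myvec{o},\mymatrix{q})$ and invoking \eqref{BBS_eq:identity_deviatoricPart} to see that $\mymatrix{\lambda}$ and the deviatoric stress agree against every $\mymatrix{q}\in Q$; since their difference lies in $Q$, they coincide a.e.\ in $\Omega$.

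The one step requiring genuine care, rather than routine algebra, is establishing $\mymatrix{\lambda}\in\Lambda$ in the forward direction: the subgradient inequality alone only yields a local bound, and it is the positive homogeneity of $\psi(\cdot)$ combined with the scaling limit $t\to\infty$ that upgrades it to the global pointwise constraint encoded in \eqref{BBS_eq:characterization_Lambda}. Everything else reduces to elementary manipulations of the bilinear form $a(\cdot,\cdot)$ and repeated use of the deviatoric identity \eqref{BBS_eq:identity_deviatoricPart}.
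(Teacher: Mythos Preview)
Your proposal is correct and follows essentially the same route as the paper. The one point worth noting concerns the step you single out as ``requiring genuine care'': to show $\mymatrix{\lambda}\in\Lambda$ in the forward direction you pass to a scaling limit $\mymatrix{q}=t\mymatrix{r}$, $t\to\infty$, using the positive homogeneity of $\psi$, whereas the paper first derives the complementarity \eqref{eq:complementarityCondition} from the choices $\mymatrix{q}=\mymatrix{0}$ and $\mymatrix{q}=2\mymatrix{p}$ and then simply adds it to the subgradient inequality $(\mymatrix{\lambda},\mymatrix{q}-\mymatrix{p})_{0,\Omega}\leq\psi(\mymatrix{q})-\psi(\mymatrix{p})$ to obtain $(\mymatrix{\lambda},\mymatrix{q})_{0,\Omega}\leq\psi(\mymatrix{q})$ at once---no limiting argument is needed, so this step is in fact as routine as the others.
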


\begin{proof}
Let $(\myvec{u},\mymatrix{p})\in V\times Q$ be the solution of \eqref{BBS_eq:variq_second_kind} and $\mymatrix{\lambda} := \operatorname{dev}(\mymatrix{\sigma}(\myvec{u},\mymatrix{p}) - \myspace{H}\mymatrix{p})$. Obviously, there holds $\mymatrix{\lambda}\in Q$ by the symmetry assumptions on $\myspace{C}$ and $\myspace{H}$, and the definition of the deviatoric part of a matrix. Choosing $\mymatrix{p} \in Q$ and $\myvec{u}\pm\myvec{v}$ for some $\myvec{v} \in V$ as test functions in \eqref{BBS_eq:variq_second_kind} yields 
\begin{align}\label{BBS_eq:lame_equation}
 \big( \mymatrix{\sigma}(\myvec{u},\mymatrix{p}), \mymatrix{\varepsilon}(\myvec{v}) \big)_{0,\Omega}
 = \ell(\myvec{v}) \qquad 
 \forall\, \myvec{v}\in V.
\end{align}
Thus, the definition of the bilinear form $a(\cdot,\cdot)$ and \eqref{BBS_eq:identity_deviatoricPart} show that
\begin{align}\label{BBS_eq:proof_thm1_a}
 \ell(\myvec{v}) 
 &= a\big( (\myvec{u},\mymatrix{p}), (\myvec{v},\mymatrix{q}) \big) + \big( \mymatrix{\sigma}(\myvec{u},\mymatrix{p}) - \myspace{H}\mymatrix{p}, \mymatrix{q} \big)_{0,\Omega} 
 = a\big( (\myvec{u},\mymatrix{p}), (\myvec{v},\mymatrix{q}) \big) + (\mymatrix{\lambda},\mymatrix{q})_{0,\Omega} \qquad 
 \forall \, (\myvec{v},\mymatrix{q})\in V\times Q,
\end{align}
which is \eqref{BBS_eq:mixed_variationalF_01}. In particular, choosing $(\myvec{0},\mymatrix{q}-\mymatrix{p})\in V\times Q$ as test function in \eqref{BBS_eq:proof_thm1_a} gives
\begin{align} \label{BBS_eq:proof_thm_1}
 (\mymatrix{\lambda},\mymatrix{q}-\mymatrix{p})_{0,\Omega}
 = - a\big( (\myvec{u},\mymatrix{p}), (\myvec{0},\mymatrix{q}-\mymatrix{p}) \big) \leq \psi(\mymatrix{q}) - \psi(\mymatrix{p}) \qquad 
 \forall \, \mymatrix{q}\in Q,
\end{align}
where the inequality follows from \eqref{BBS_eq:variq_second_kind}. Now, choosing $\mymatrix{q}=\mymatrix{0}$ and $\mymatrix{q}=2\mymatrix{p}$ in \eqref{BBS_eq:proof_thm_1} proves the complementarity condition \eqref{eq:complementarityCondition}. In particular, \eqref{eq:complementarityCondition} and \eqref{BBS_eq:proof_thm_1} imply $(\mymatrix{\lambda},\mymatrix{q})_{0,\Omega} \leq \psi(\mymatrix{q})$ for any $\mymatrix{q}\in Q$ and, thus, $\mymatrix{\lambda}\in\Lambda$. The equivalent characterization \eqref{BBS_eq:characterization_Lambda} of $\Lambda$, $\mymatrix{\lambda}\in\Lambda$ and \eqref{eq:complementarityCondition} imply
\begin{align*}
 (\mymatrix{\mu}-\mymatrix{\lambda},\mymatrix{p})_{0,\Omega}
 = (\mymatrix{\mu},\mymatrix{p})_{0,\Omega} - \psi(\mymatrix{p}) 
 \leq 0 \qquad 
 \forall \, \mymatrix{\mu}\in\Lambda,
\end{align*}
which is \eqref{BBS_eq:mixed_variationalF_02}. Hence, the triple $(\myvec{u},\mymatrix{p},\mymatrix{\lambda})$ solves the mixed variational problem \eqref{BBS_eq:mixed_variationalF}.\\

Conversely, let $(\myvec{u},\mymatrix{p},\mymatrix{\lambda})\in V\times Q \times \Lambda$ solve \eqref{BBS_eq:mixed_variationalF}. Choosing $\mymatrix{q}=\mymatrix{0}$ in \eqref{BBS_eq:mixed_variationalF_01} yields \eqref{BBS_eq:lame_equation}. Therefore, from \eqref{BBS_eq:mixed_variationalF_01} and by using \eqref{BBS_eq:identity_deviatoricPart} we obtain
\begin{align*}
 0 
 &= \big( \mymatrix{\sigma}(\myvec{u},\mymatrix{p}), \mymatrix{\varepsilon}(\myvec{v}) \big)_{0,\Omega} - a\big( (\myvec{u},\mymatrix{p}),(\myvec{v},\mymatrix{q}) \big) - (\mymatrix{\lambda}, \mymatrix{q})_{0,\Omega} \\
 &= \big( \mymatrix{\sigma}(\myvec{u},\mymatrix{p}) - \myspace{H}\mymatrix{p}, \mymatrix{q} \big)_{0,\Omega} - (\mymatrix{\lambda},\mymatrix{q})_{0,\Omega} \\
 &= \big( \operatorname{dev}(\mymatrix{\sigma}(\myvec{u},\mymatrix{p}) - \myspace{H}\mymatrix{p}) - \mymatrix{\lambda}, \mymatrix{q} \big)_{0,\Omega}
\end{align*}
for any $\mymatrix{q}\in Q$ from which we deduce $\mymatrix{\lambda} = \operatorname{dev}(\mymatrix{\sigma}(\myvec{u},\mymatrix{p}) - \myspace{H}\mymatrix{p})\in Q$. From \eqref{BBS_eq:mixed_variationalF_02} and \eqref{BBS_eq:characterization_Lambda} we obtain
\begin{align*}
 (\mymatrix{\mu},\mymatrix{p})_{0,\Omega}
 \leq (\mymatrix{\lambda},\mymatrix{p})_{0,\Omega}
 \leq \psi(\mymatrix{p}) \qquad 
 \forall \, \mymatrix{\mu}\in\Lambda,
\end{align*}
which together with \eqref{eq:sup_def_psi} implies that
\begin{align*}
 \psi(\mymatrix{p}) 
 = \sup_{\mymatrix{\mu}\in\Lambda} (\mymatrix{\mu},\mymatrix{p})_{0,\Omega} 
 = (\mymatrix{\lambda},\mymatrix{p})_{0,\Omega},
\end{align*}
i.e.~there holds \eqref{eq:complementarityCondition}.
Using $\myvec{v}-\myvec{u}$ and $\mymatrix{q}-\mymatrix{p}$ with arbitrary $(\myvec{v},\mymatrix{q})\in V\times Q$ as test functions in \eqref{BBS_eq:mixed_variationalF_01} we find that
\begin{align*}
 \ell(\myvec{v}-\myvec{u})
 &= a\big( (\myvec{u},\mymatrix{p}),(\myvec{v}-\myvec{u},\mymatrix{q}-\mymatrix{p}) \big) + (\mymatrix{\lambda}, \mymatrix{q}-\mymatrix{p})_{0,\Omega} \\
 &= a\big( (\myvec{u},\mymatrix{p}),(\myvec{v}-\myvec{u},\mymatrix{q}-\mymatrix{p}) \big) + (\mymatrix{\lambda},\mymatrix{q})_{0,\Omega} - \psi(\mymatrix{p}) \\
 &\leq a\big( (\myvec{u},\mymatrix{p}),(\myvec{v}-\myvec{u},\mymatrix{q}-\mymatrix{p}) \big) + \psi(\mymatrix{q}) - \psi(\mymatrix{p})
\end{align*}
as $\mymatrix{\lambda}\in\Lambda$. Hence, $(\myvec{u},\mymatrix{p})$ solves the variational inequality \eqref{BBS_eq:variq_second_kind}.
\end{proof}

\begin{corollary}\label{BBS_lem:properties_lagrangeMult}
It holds $\mymatrix{p} : \mymatrix{\lambda} = \sigma_y \, \vert \mymatrix{p}\vert_F$ a.e.~in $\Omega$.
\end{corollary}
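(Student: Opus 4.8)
The plan is to upgrade the integrated complementarity condition from Theorem~\ref{BBS_thm:equivalence_continuous_level} to a pointwise identity by means of a standard sign argument. First I would recall that the theorem provides \eqref{eq:complementarityCondition}, namely $(\mymatrix{\lambda},\mymatrix{p})_{0,\Omega} = \psi(\mymatrix{p})$, and that by the definition of the plasticity functional $\psi(\mymatrix{p}) = (\sigma_y,\vert\mymatrix{p}\vert_F)_{0,\Omega} = \int_{\Omega} \sigma_y\,\vert\mymatrix{p}\vert_F \mathrm{\,d}\myvec{x}$. Combining these two facts yields the global equality
\begin{align*}
 \int_{\Omega} \mymatrix{p} : \mymatrix{\lambda} \mathrm{\,d}\myvec{x}
 = \int_{\Omega} \sigma_y\,\vert\mymatrix{p}\vert_F \mathrm{\,d}\myvec{x}.
\end{align*}

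Next I would establish the corresponding pointwise inequality in the opposite direction. Since $\mymatrix{\lambda}\in\Lambda$, the very definition of $\Lambda$ gives $\vert\mymatrix{\lambda}\vert_F\leq\sigma_y$ almost everywhere in $\Omega$. Applying the Cauchy--Schwarz inequality for the Frobenius inner product pointwise then produces
\begin{align*}
 \mymatrix{p} : \mymatrix{\lambda}
 \leq \vert\mymatrix{p}\vert_F\,\vert\mymatrix{\lambda}\vert_F
 \leq \sigma_y\,\vert\mymatrix{p}\vert_F \qquad \text{a.e.\ in } \Omega,
\end{align*}
so that the integrand $\sigma_y\,\vert\mymatrix{p}\vert_F - \mymatrix{p}:\mymatrix{\lambda}$ is nonnegative almost everywhere in $\Omega$.

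Finally I would combine the two observations: the nonnegative function $\sigma_y\,\vert\mymatrix{p}\vert_F - \mymatrix{p}:\mymatrix{\lambda}$ has vanishing integral over $\Omega$ by the global equality above, hence it must vanish almost everywhere. This is exactly the asserted identity $\mymatrix{p}:\mymatrix{\lambda} = \sigma_y\,\vert\mymatrix{p}\vert_F$.

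There is essentially no serious obstacle here; the argument is elementary once the right ingredients are assembled. The only point requiring a little care is the passage from the integral identity to the pointwise one, which rests on the basic fact that a nonnegative integrable function with zero integral vanishes almost everywhere. The membership $\mymatrix{\lambda}\in\Lambda$ together with the pointwise Cauchy--Schwarz bound is precisely what supplies the correct sign needed to invoke this fact.
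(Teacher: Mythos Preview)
Your proposal is correct and follows essentially the same approach as the paper: both use the pointwise Cauchy--Schwarz inequality together with $\mymatrix{\lambda}\in\Lambda$ to obtain $\sigma_y\,\vert\mymatrix{p}\vert_F - \mymatrix{p}:\mymatrix{\lambda}\geq 0$ a.e., and then invoke the complementarity condition \eqref{eq:complementarityCondition} to conclude that this nonnegative function has zero integral and hence vanishes almost everywhere.
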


\begin{proof}
The Cauchy-Schwarz inequality together with $\mymatrix{\lambda}\in\Lambda$ immediately gives
\begin{align*}
 0 
 = \sigma_y \, \vert\mymatrix{p}\vert_F - \sigma_y \, \vert\mymatrix{p}\vert_F 
 \leq \sigma_y \, \vert\mymatrix{p}\vert_F - \vert\mymatrix{\lambda}\vert_F \, \vert\mymatrix{p}\vert_F 
 \leq  \sigma_y \, \vert\mymatrix{p}\vert_F - \mymatrix{\lambda} : \mymatrix{p}
\end{align*}
a.e.~in $\Omega$. Now, the assertion follows with the complementarity condition \eqref{eq:complementarityCondition}.
\end{proof}

\begin{lemma}
The solution of \eqref{BBS_eq:mixed_variationalF} depends Lipschitz-continuously on the data $\myvec{f}$, $\myvec{g}$ and $\sigma_y$. More precisely,
\begin{align*}
 \Vert (\myvec{u}_2-\myvec{u}_1, \mymatrix{p}_2-\mymatrix{p}_1) \Vert + \Vert \mymatrix{\lambda}_2-\mymatrix{\lambda}_1 \Vert_{0,\Omega} 
 \leq \frac{1+c_a}{\alpha} \, \Big( \Vert \sigma_{y,2} -\sigma_{y,1} \Vert_{0,\Omega} + \Vert \myvec{f}_2-\myvec{f}_1\Vert_{V^*} + c_{tr} \, \Vert \myvec{g}_2-\myvec{g}_1\Vert_{-\frac{1}{2},\Gamma_N} \Big),
\end{align*}
where $(\myvec{u}_i,\mymatrix{p}_i,\mymatrix{\lambda}_i)$ is the solution to the data $(\myvec{f}_i,\myvec{g}_i,\sigma_{y,i})$ for $i=1,2$.
\end{lemma}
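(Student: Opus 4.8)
The plan is to combine the $(V\times Q)$-ellipticity and continuity of $a(\cdot,\cdot)$ with the complementarity condition \eqref{eq:complementarityCondition} from Theorem~\ref{BBS_thm:equivalence_continuous_level}. Abbreviating $\myvec{u}:=\myvec{u}_2-\myvec{u}_1$, $\mymatrix{p}:=\mymatrix{p}_2-\mymatrix{p}_1$, $\mymatrix{\lambda}:=\mymatrix{\lambda}_2-\mymatrix{\lambda}_1$ and denoting by $\ell_i$, $\psi_i$, $\Lambda_i$ the linear form, plasticity functional and admissible set associated with the data $(\myvec{f}_i,\myvec{g}_i,\sigma_{y,i})$, I would first subtract the two instances of \eqref{BBS_eq:mixed_variationalF_01} to get
\begin{align*}
 a\big((\myvec{u},\mymatrix{p}),(\myvec{v},\mymatrix{q})\big)+(\mymatrix{\lambda},\mymatrix{q})_{0,\Omega}=(\ell_2-\ell_1)(\myvec{v})\qquad\forall\,(\myvec{v},\mymatrix{q})\in V\times Q.
\end{align*}
Testing with $(\myvec{v},\mymatrix{q})=(\myvec{u},\mymatrix{p})$ and using ellipticity yields $\alpha\,\Vert(\myvec{u},\mymatrix{p})\Vert^2\leq(\ell_2-\ell_1)(\myvec{u})-(\mymatrix{\lambda},\mymatrix{p})_{0,\Omega}$, so everything reduces to estimating the two terms on the right.

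The genuinely delicate step is the coupling term $-(\mymatrix{\lambda},\mymatrix{p})_{0,\Omega}=-(\mymatrix{\lambda}_2-\mymatrix{\lambda}_1,\mymatrix{p}_2-\mymatrix{p}_1)_{0,\Omega}$. Since the yield stresses differ, the constraint sets $\Lambda_1$ and $\Lambda_2$ do not coincide, so the standard device of inserting $\mymatrix{\lambda}_2$ as a test function into problem~$1$ and $\mymatrix{\lambda}_1$ into problem~$2$ in \eqref{BBS_eq:mixed_variationalF_02} fails: one has $\mymatrix{\lambda}_2\in\Lambda_2$ but in general $\mymatrix{\lambda}_2\notin\Lambda_1$. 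Instead I would expand the inner product into the four contributions $(\mymatrix{\lambda}_i,\mymatrix{p}_j)_{0,\Omega}$ and treat them asymmetrically: the complementarity condition \eqref{eq:complementarityCondition} handles the diagonal terms exactly, $(\mymatrix{\lambda}_i,\mymatrix{p}_i)_{0,\Omega}=\psi_i(\mymatrix{p}_i)=(\sigma_{y,i},\vert\mymatrix{p}_i\vert_F)_{0,\Omega}$, while the pointwise bound $\vert\mymatrix{\lambda}_i\vert_F\leq\sigma_{y,i}$ (that is, $\mymatrix{\lambda}_i\in\Lambda_i$) together with the Cauchy--Schwarz inequality controls the off-diagonal terms, $(\mymatrix{\lambda}_i,\mymatrix{p}_j)_{0,\Omega}\leq(\sigma_{y,i},\vert\mymatrix{p}_j\vert_F)_{0,\Omega}$. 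Collecting the four estimates, the $\psi$-terms telescope into
\begin{align*}
 -(\mymatrix{\lambda},\mymatrix{p})_{0,\Omega}\leq\big(\sigma_{y,1}-\sigma_{y,2},\,\vert\mymatrix{p}_2\vert_F-\vert\mymatrix{p}_1\vert_F\big)_{0,\Omega}\leq\Vert\sigma_{y,2}-\sigma_{y,1}\Vert_{0,\Omega}\,\Vert\mymatrix{p}_2-\mymatrix{p}_1\Vert_{0,\Omega},
\end{align*}
where the last inequality combines the pointwise reverse triangle inequality $\big\vert\,\vert\mymatrix{p}_2\vert_F-\vert\mymatrix{p}_1\vert_F\,\big\vert\leq\vert\mymatrix{p}_2-\mymatrix{p}_1\vert_F$ with the Cauchy--Schwarz inequality in $L^2(\Omega)$.

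It remains to assemble the estimate. The load term is bounded by the definition of $\ell$, the duality pairings and the trace theorem,
\begin{align*}
 (\ell_2-\ell_1)(\myvec{u})\leq\big(\Vert\myvec{f}_2-\myvec{f}_1\Vert_{V^*}+c_{tr}\,\Vert\myvec{g}_2-\myvec{g}_1\Vert_{-\frac{1}{2},\Gamma_N}\big)\,\Vert\myvec{u}\Vert_{1,\Omega}.
\end{align*}
Bounding $\Vert\myvec{u}\Vert_{1,\Omega}$ and $\Vert\mymatrix{p}_2-\mymatrix{p}_1\Vert_{0,\Omega}$ by $\Vert(\myvec{u},\mymatrix{p})\Vert$ and cancelling one factor of $\Vert(\myvec{u},\mymatrix{p})\Vert$ gives $\Vert(\myvec{u},\mymatrix{p})\Vert\leq D/\alpha$ with $D:=\Vert\sigma_{y,2}-\sigma_{y,1}\Vert_{0,\Omega}+\Vert\myvec{f}_2-\myvec{f}_1\Vert_{V^*}+c_{tr}\,\Vert\myvec{g}_2-\myvec{g}_1\Vert_{-\frac{1}{2},\Gamma_N}$. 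To bound the multiplier I would return to the subtracted identity, set $\myvec{v}=\myvec{o}$ so that $(\mymatrix{\lambda},\mymatrix{q})_{0,\Omega}=-a\big((\myvec{u},\mymatrix{p}),(\myvec{o},\mymatrix{q})\big)$, apply continuity of $a(\cdot,\cdot)$ and take the supremum over $\mymatrix{q}\in Q$ with $\Vert\mymatrix{q}\Vert_{0,\Omega}=1$, obtaining $\Vert\mymatrix{\lambda}\Vert_{0,\Omega}\leq c_a\,\Vert(\myvec{u},\mymatrix{p})\Vert\leq c_a\,D/\alpha$. Adding the two bounds produces the claimed estimate with constant $(1+c_a)/\alpha$. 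The one place requiring care is the coupling-term estimate, which is precisely why I would exploit the complementarity condition instead of cross-testing the variational inequality.
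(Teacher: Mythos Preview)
Your proof is correct and follows essentially the same approach as the paper's: subtract the two instances of \eqref{BBS_eq:mixed_variationalF_01}, test with the difference $(\myvec{u}_2-\myvec{u}_1,\mymatrix{p}_2-\mymatrix{p}_1)$, handle the coupling term by expanding into the four contributions and using the complementarity condition \eqref{eq:complementarityCondition} on the diagonal together with $\mymatrix{\lambda}_i\in\Lambda_i$ on the off-diagonal, then bound $\mymatrix{\lambda}_2-\mymatrix{\lambda}_1$ via continuity of $a(\cdot,\cdot)$ after setting $\myvec{v}=\myvec{o}$. The only cosmetic difference is that the paper inserts $\mymatrix{q}=\mymatrix{\lambda}_2-\mymatrix{\lambda}_1$ directly rather than taking a supremum, which amounts to the same estimate.
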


\begin{proof}
Let $\ell_i(\cdot) := \langle \myvec{f}_i,\cdot \rangle_{\Omega} + \langle \myvec{g}_i,\cdot \rangle_{\Gamma_N}$ and $\psi_i(\cdot) := (\sigma_{y,i}, \vert\cdot\vert_F)_{0,\Omega}$. Furthermore, define
\begin{align*}
 \Lambda_i 
 := \big\lbrace \mymatrix{\mu}\in Q \; ; \; (\mymatrix{\mu},\mymatrix{q})_{0,\Omega} \leq \psi_i(\mymatrix{q}) \text{ for all } \mymatrix{q}\in Q \big\rbrace
\end{align*}
and let $(\myvec{u}_i,\mymatrix{p}_i,\mymatrix{\lambda}_i)\in V\times Q\times \Lambda_i$ be the unique solution to
\begin{subequations}
\begin{alignat}{2}
 a\big( (\myvec{u}_i,\mymatrix{p}_i),(\myvec{v},\mymatrix{q}) \big) + (\mymatrix{\lambda}_i, \mymatrix{q})_{0,\Omega} &= \ell_i(\myvec{v}) & \qquad & \forall \, (\myvec{v},\mymatrix{q}) \in V \times Q, \label{BBS_eq:proofLEM4_vareq_01} \\
 (\mymatrix{\mu}-\mymatrix{\lambda}_i,\mymatrix{p}_i)_{0,\Omega} &\leq 0 & \quad & \forall \, \mymatrix{\mu} \in \Lambda_i.
\end{alignat}
\end{subequations}
Subtracting the equations \eqref{BBS_eq:proofLEM4_vareq_01} for $i=1,2$ from each other yields
\begin{align}\label{BBS_eq:proofLEM4_vareq_02}
 a\big( (\myvec{u}_2-\myvec{u}_1,\mymatrix{p}_2-\mymatrix{p}_1),(\myvec{v},\mymatrix{q}) \big) + (\mymatrix{\lambda}_2 - \mymatrix{\lambda}_1, \mymatrix{q}) _{0,\Omega}
 &= \ell_2(\myvec{v}) - \ell_1(\myvec{v}) \qquad  
 \forall \, (\myvec{v},\mymatrix{q}) \in V \times Q
\end{align}
and, in particular,
\begin{align}\label{BBS_eq:proofLEM4_vareq_03}
 a\big( (\myvec{u}_2-\myvec{u}_1,\mymatrix{p}_2-\mymatrix{p}_1), (\myvec{u}_2-\myvec{u}_1,\mymatrix{p}_2-\mymatrix{p}_1) \big)
 = \ell_2(\myvec{u}_2-\myvec{u}_1) - \ell_1(\myvec{u}_2-\myvec{u}_1) - (\mymatrix{\lambda}_2 - \mymatrix{\lambda}_1, \mymatrix{p}_2-\mymatrix{p}_1)_{0,\Omega}.
\end{align}
Exploiting the complementarity condition \eqref{eq:complementarityCondition} and using that $\mymatrix{\lambda}_i\in\Lambda_i$ we find that
\begin{align*}
 - (\mymatrix{\lambda}_2 - \mymatrix{\lambda}_1, \mymatrix{p}_2-\mymatrix{p}_1)_{0,\Omega}
  &= (\mymatrix{\lambda}_2, \mymatrix{p}_1)_{0,\Omega} - (\mymatrix{\lambda}_2, \mymatrix{p}_2)_{0,\Omega} + (\mymatrix{\lambda}_1, \mymatrix{p}_2)_{0,\Omega} - (\mymatrix{\lambda}_1, \mymatrix{p}_1)_{0,\Omega} \\
  &\leq \psi_2(\mymatrix{p}_1) - \psi_2(\mymatrix{p}_2) + \psi_1(\mymatrix{p}_2) - \psi_1(\mymatrix{p}_1) \\
  &= \big( \sigma_{y,2}-\sigma_{y,1}, \vert \mymatrix{p}_1\vert_F - \vert \mymatrix{p}_2\vert_F \big)_{0,\Omega} \\
  &\leq \Vert \sigma_{y,2}-\sigma_{y,1}\Vert_{0,\Omega} \, \Vert \mymatrix{p}_1 - \mymatrix{p}_2\Vert_{0,\Omega},
\end{align*}
where the last inequality follows from the Cauchy-Schwarz inequality and the reverse triangle inequality, which gives
\begin{align*} 
 \big\Vert \vert \mymatrix{p}_1\vert_F - \vert\mymatrix{p}_2\vert_F \big\Vert_{0,\Omega} 
 = \left( \int_{\Omega} \big( \vert \mymatrix{p}_1\vert_F - \vert\mymatrix{p}_2\vert_F \big)^2 \mathrm{\, d}\myvec{x} \right)^{1/2}
 \leq \left( \int_{\Omega} \vert \mymatrix{p}_1 - \mymatrix{p}_2\vert_F^2 \mathrm{\, d}\myvec{x} \right)^{1/2}
 = \Vert \mymatrix{p}_1 - \mymatrix{p}_2\Vert_{0,\Omega}.
\end{align*}
Furthermore, by using the trace theorem we obtain
\begin{align*}
 \ell_2(\myvec{u}_2-\myvec{u}_1) - \ell_1(\myvec{u}_2-\myvec{u}_1)
 &= \langle \myvec{f}_2-\myvec{f}_1, \myvec{u}_2-\myvec{u}_1 \rangle_{\Omega} + \langle \myvec{g}_2-\myvec{g}_1, \myvec{u}_2-\myvec{u}_1 \rangle_{\Gamma_N} \\
 &\leq \Vert \myvec{f}_2-\myvec{f}_1 \Vert_{V^*} \, \Vert \myvec{u}_2-\myvec{u}_1 \Vert_{1,\Omega} + \Vert \myvec{g}_2-\myvec{g}_1 \Vert_{-\frac{1}{2},\Gamma_N} \, \Vert \myvec{u}_2-\myvec{u}_1\Vert_{\frac{1}{2},\Gamma_N} \\
 &\leq \left( \Vert \myvec{f}_2-\myvec{f}_1 \Vert_{V^*} + c_{tr} \, \Vert \myvec{g}_2-\myvec{g}_1 \Vert_{-\frac{1}{2},\Gamma_N} \right) \Vert \myvec{u}_2-\myvec{u}_1\Vert_{1,\Omega}.
\end{align*}
Hence, \eqref{BBS_eq:proofLEM4_vareq_03} yields
\begin{align*}
 \alpha \, \Vert (\myvec{u}_2-\myvec{u}_1, \mymatrix{p}_2-\mymatrix{p}_1) \Vert^2
 &\leq a\big( (\myvec{u}_2-\myvec{u}_1, \mymatrix{p}_2-\mymatrix{p}_1), (\myvec{u}_2-\myvec{u}_1, \mymatrix{p}_2-\mymatrix{p}_1) \big) \\
 &\leq \Vert \sigma_{y,2}-\sigma_{y,1}\Vert_{0,\Omega} \, \Vert\mymatrix{p}_2 - \mymatrix{p}_1\Vert_{0,\Omega}  + \Big( \Vert\myvec{f}_2-\myvec{f}_1\Vert_{V^*} + c_{tr} \, \Vert \myvec{g}_2 - \myvec{g}_1\Vert_{-\frac{1}{2},\Gamma_N} \Big) \Vert \myvec{u}_2-\myvec{u}_1 \Vert_{1,\Omega},
\end{align*}
from which we deduce
\begin{align}\label{BBS_eq:proofLEM4_vareq_04}
 \Vert (\myvec{u}_2-\myvec{u}_1, \mymatrix{p}_2-\mymatrix{p}_1) \Vert
 \leq \frac{1}{\alpha} \, \Big( \Vert \sigma_{y,2}-\sigma_{y,1} \Vert_{0,\Omega} + \Vert \myvec{f}_2-\myvec{f}_1 \Vert_{V^*} + c_{tr} \, \Vert \myvec{g}_2-\myvec{g}_1\Vert_{-\frac{1}{2},\Gamma_N} \Big).
\end{align}
Finally, the choice $\myvec{v}=\myvec{o}$ and $\mymatrix{q}=\mymatrix{\lambda}_2-\mymatrix{\lambda}_1$ as test functions in \eqref{BBS_eq:proofLEM4_vareq_02} gives
\begin{align*}
 \Vert \mymatrix{\lambda}_2-\mymatrix{\lambda}_1\Vert_{0,\Omega}^2
 = - a\big( (\myvec{u}_2-\myvec{u}_1, \mymatrix{p}_2 -\mymatrix{p}_1), (\myvec{0}, \mymatrix{\lambda}_2-\mymatrix{\lambda}_1) \big) 
 \leq c_a \, \Vert (\myvec{u}_2-\myvec{u}_1,\mymatrix{p}_2-\mymatrix{p}_1) \Vert \, \Vert \mymatrix{\lambda}_2- \mymatrix{\lambda}_1 \Vert_{0,\Omega}.
\end{align*}
This inequality together with \eqref{BBS_eq:proofLEM4_vareq_04} finally imply the assertion.
\end{proof}

\medskip


\section{An $hp$-Finite Element Discretization of the Mixed Formulation} 
\label{BBS_sec:discretization}

Let $\mathcal{T}_h$ be a locally quasi-uniform finite element mesh of $\Omega$ consisting of quadrilaterals or hexahedrons. Moreover, choose $\widehat{T} := [-1,1]^d$ as reference element and let $\myvec{F}_T:\widehat{T}\rightarrow T$ be the bi/tri-linear bijective mapping for $T\in\mathcal{T}_h$. We set $h := (h_T)_{T\in\mathcal{T}_h}$ and $p := (p_T)_{T\in\mathcal{T}_h}$ where $h_T$ and $p_T$ denote the local element size and the local polynomial degree, respectively. The polynomial degrees of neighboring elements are assumed to be comparable. We refer to \cite{Melenk2005} for details on quasi-uniformity and comparable polynomial degrees. As there is no risk of confusion we also use the standard notation $h:=\max_{T\in\mathcal{T}_h} h_T$ and $p:=\min_{T\in\mathcal{T}_h} p_T$. 

The a priori error analysis of the following sections frequently exploits the exactness of the Gauss quadrature for polynomials. For this purpose, we make the general assumption
\begin{align}\label{BBS_eq:condition_detF}
 \det \nabla\myvec{F}_T\in\myspace{P}_1 (\widehat{T})
\end{align} 
for those elements $T\in\mathcal{T}_h$ for which there holds $p_T \geq 2$ . Note that $\det \nabla\myvec{F}_T$ has no change of sign on $\widehat{T}$. While \eqref{BBS_eq:condition_detF} is no restriction for the two dimensional case it does slightly limit the shape of mesh elements in the case of $d=3$. For the discretization of the displacement field and the plastic strain we use the $hp$-finite element spaces
\begin{align*}
 V_{hp} 
 &:= \Big\lbrace \myvec{v}_{hp}\in V \; ; \; \myvec{v}_{hp \, | \, T}\circ \myvec{F}_T\in \big(\myspace{P}_{p_T}(\widehat{T})\big)^d \text{ for all } T\in\mathcal{T}_h\Big\rbrace, \\
 Q_{hp} 
 &:= \Big\lbrace \mymatrix{q}_{hp}\in Q \; ; \; \mymatrix{q}_{hp \, | \, T}\circ \myvec{F}_T\in \big(\myspace{P}_{p_T-1}(\widehat{T})\big)^{d\times d} \text{ for all } T\in\mathcal{T}_h\Big\rbrace.
\end{align*}
In addition we need a non-empty, convex and closed set $\Lambda_{hp}$ of admissible discrete Lagrange multipliers. To this end, let $\hat{\myvec{x}}_{k,T}\in \widehat{T}$ for $1\leq k \leq n_T$ be the tensor product Gauss quadrature points on $\widehat{T}$, where $n_T := p_T^d$ for $T\in\mathcal{T}_h$, and define
\begin{align*}
 \Lambda_{hp} 
 := \Big\lbrace \mymatrix{\mu}_{hp}\in Q_{hp} \; ; \; \big\vert \mymatrix{\mu}_{hp}(\myvec{F}_T(\hat{\myvec{x}}_{k,T})) \big\vert_F \leq \sigma_y \text{ for all } 1\leq k \leq n_T \text{ and } T \in \mathcal{T}_h \Big\rbrace.
\end{align*}
Obviously, $\Lambda_{hp}$ is a non-empty, convex and closed subset of $Q_{hp}$. As the constraint on the Frobenius norm of $\mymatrix{\mu}_{hp}$ is only enforced in the Gauss quadrature points we have that $\Lambda_{hp}\subseteq \Lambda$ if and only if $p_T=1$ for all $T\in\mathcal{T}_h$. We emphasize that $Q_{hp}$ and $\Lambda_{hp}$ are defined on the same mesh and have the same polynomial degree distribution. Moreover, we can use the same Gauss-Legendre-Lagrange based basis functions for both sets, which simplifies the implementation and may reduce the computational effort.
The discrete mixed formulation is to find a triple $(\myvec{u}_{hp},\mymatrix{p}_{hp},\mymatrix{\lambda}_{hp})\in V_{hp}\times Q_{hp}\times\Lambda_{hp}$ such that
\begin{subequations}\label{BBS_eq:discrete_mixed_variationalF}
\begin{alignat}{2}
 a\big( (\myvec{u}_{hp},\mymatrix{p}_{hp}),(\myvec{v}_{hp},\mymatrix{q}_{hp}) \big) + (\mymatrix{\lambda}_{hp}, \mymatrix{q}_{hp})_{0,\Omega} &= \ell(\myvec{v}_{hp}) & \qquad & \forall \, (\myvec{v}_{hp},\mymatrix{q}_{hp}) \in V_{hp}\times Q_{hp}, \label{BBS_eq:discrete_mixed_variationalF_01}\\
 (\mymatrix{\mu}_{hp}-\mymatrix{\lambda}_{hp}, \mymatrix{p}_{hp})_{0,\Omega}
 &\leq 0 & \quad & \forall \, \mymatrix{\mu}_{hp} \in \Lambda_{hp}. \label{BBS_eq:discrete_mixed_variationalF_02}
\end{alignat}
\end{subequations}

The following lemma states that the discrete inf-sup condition is uniformly fulfilled. In fact, it holds with an equality sign and an inf-sup constant of $1$.
\begin{lemma}\label{BBS_lem:discrete_inf_sup}
There holds
\begin{align}\label{BBS_eq:babusca_brezzi_cond}
 \sup_{\substack{(\myvec{v}_{hp},\mymatrix{q}_{hp})\in V_{hp}\times Q_{hp} \\ \Vert (\myvec{v}_{hp},\mymatrix{q}_{hp})\Vert \neq 0}} \frac{(\mymatrix{\mu}_{hp},\mymatrix{q}_{hp})_{0,\Omega}}{\Vert(\myvec{v}_{hp},\mymatrix{q}_{hp})\Vert}
  = \Vert \mymatrix{\mu}_{hp}\Vert_{0,\Omega} \qquad 
  \forall \, \mymatrix{\mu}_{hp}\in Q_{hp}.
\end{align}
\end{lemma}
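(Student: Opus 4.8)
The plan is to exploit the very special structure of the inf-sup quotient. First I observe that the numerator $(\mymatrix{\mu}_{hp},\mymatrix{q}_{hp})_{0,\Omega}$ does not depend on $\myvec{v}_{hp}$ at all, whereas the denominator $\Vert(\myvec{v}_{hp},\mymatrix{q}_{hp})\Vert = \big(\Vert \myvec{v}_{hp}\Vert_{1,\Omega}^2 + \Vert \mymatrix{q}_{hp}\Vert_{0,\Omega}^2\big)^{1/2}$ is monotone in $\Vert \myvec{v}_{hp}\Vert_{1,\Omega}$. Hence any nonzero $\myvec{v}_{hp}$ only enlarges the denominator without affecting the numerator, so the supremum is attained (or approached) with $\myvec{v}_{hp} = \myvec{o}$. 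This reduces the claim to showing
\begin{align*}
 \sup_{\substack{\mymatrix{q}_{hp}\in Q_{hp} \\ \Vert \mymatrix{q}_{hp}\Vert_{0,\Omega}\neq 0}} \frac{(\mymatrix{\mu}_{hp},\mymatrix{q}_{hp})_{0,\Omega}}{\Vert \mymatrix{q}_{hp}\Vert_{0,\Omega}} = \Vert \mymatrix{\mu}_{hp}\Vert_{0,\Omega}.
\end{align*}

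Next I would establish the upper bound $\leq$ directly from the Cauchy-Schwarz inequality: for every admissible $\mymatrix{q}_{hp}$ one has $(\mymatrix{\mu}_{hp},\mymatrix{q}_{hp})_{0,\Omega} \leq \Vert \mymatrix{\mu}_{hp}\Vert_{0,\Omega}\,\Vert \mymatrix{q}_{hp}\Vert_{0,\Omega}$, so the quotient never exceeds $\Vert \mymatrix{\mu}_{hp}\Vert_{0,\Omega}$. The matching lower bound is where the crucial (and essentially only) structural point enters: since $\mymatrix{\mu}_{hp}\in Q_{hp}$ by hypothesis, it is itself an admissible test function. Assuming $\mymatrix{\mu}_{hp}\neq\mymatrix{0}$ (the case $\mymatrix{\mu}_{hp}=\mymatrix{0}$ being trivial, as both sides vanish), I would simply insert $\myvec{v}_{hp}=\myvec{o}$ and $\mymatrix{q}_{hp}=\mymatrix{\mu}_{hp}$, giving the quotient $\Vert \mymatrix{\mu}_{hp}\Vert_{0,\Omega}^2/\Vert \mymatrix{\mu}_{hp}\Vert_{0,\Omega} = \Vert \mymatrix{\mu}_{hp}\Vert_{0,\Omega}$. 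Thus the supremum is attained, and the equality follows.

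There is no genuine obstacle here; the whole argument hinges on the single observation that the test space $Q_{hp}$ for the trial multiplier and the space containing $\mymatrix{\mu}_{hp}$ coincide, which is precisely the design choice (same mesh, same polynomial degree, same basis) emphasized just before the lemma. It is exactly this conformity of $\mymatrix{\mu}_{hp}$ in $Q_{hp}$ that upgrades the usual inf-sup \emph{inequality} to an \emph{equality} with constant $1$, since the Cauchy-Schwarz bound is saturated by the diagonal choice $\mymatrix{q}_{hp}=\mymatrix{\mu}_{hp}$. The only care needed is to treat the degenerate case $\mymatrix{\mu}_{hp}=\mymatrix{0}$ separately so that the quotient is well defined.
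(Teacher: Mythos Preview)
Your proof is correct and follows essentially the same approach as the paper: the upper bound is Cauchy--Schwarz, and the lower bound is obtained by inserting the admissible test pair $(\myvec{o},\mymatrix{\mu}_{hp})\in V_{hp}\times Q_{hp}$. The paper applies Cauchy--Schwarz directly to the full quotient (using $\Vert\mymatrix{q}_{hp}\Vert_{0,\Omega}\leq\Vert(\myvec{v}_{hp},\mymatrix{q}_{hp})\Vert$) rather than first reducing to a supremum over $\mymatrix{q}_{hp}$ alone, but this is only a cosmetic difference.
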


\begin{proof}
The Cauchy-Schwarz inequality immediately yields for arbitrary $\mymatrix{\mu}_{hp}\in Q_{hp}$
\begin{align*}
 \sup_{\substack{(\myvec{v}_{hp},\mymatrix{q}_{hp})\in V_{hp}\times Q_{hp} \\ \Vert (\myvec{v}_{hp},\mymatrix{q}_{hp})\Vert \neq 0}} \frac{(\mymatrix{\mu}_{hp},\mymatrix{q}_{hp})_{0,\Omega}}{\Vert(\myvec{v}_{hp},\mymatrix{q}_{hp})\Vert}
 \leq \Vert \mymatrix{\mu}_{hp}\Vert_{0,\Omega}.
\end{align*}
Conversely, as $(\myvec{0},\mymatrix{\mu}_{hp})\in V_{hp}\times Q_{hp}$ it holds
\begin{align*}
 \sup_{\substack{(\myvec{v}_{hp},\mymatrix{q}_{hp})\in V_{hp}\times Q_{hp} \\ \Vert (\myvec{v}_{hp},\mymatrix{q}_{hp})\Vert \neq 0}} \frac{(\mymatrix{\mu}_{hp},\mymatrix{q}_{hp})_{0,\Omega}}{\Vert(\myvec{v}_{hp},\mymatrix{q}_{hp})\Vert}
 \geq \Vert \mymatrix{\mu}_{hp}\Vert_{0,\Omega},
\end{align*}
which shows \eqref{BBS_eq:babusca_brezzi_cond}.
\end{proof}

The following theorem shows the existence of a unique solution as well as the stability of the discretization.

\begin{theorem}\label{BBS_thm:discrete_mixedF_solvable}
There exists a unique solution to \eqref{BBS_eq:discrete_mixed_variationalF}. Moreover, it holds
\begin{subequations}
\begin{align}
 \Vert (\myvec{u}_{hp},\mymatrix{p}_{hp}) \Vert &\leq \frac{1}{\alpha} \, \Big( \Vert \myvec{f}\Vert_{V^*} + c_{tr} \, \Vert \myvec{g}\Vert_{-\frac{1}{2},\Gamma_N} \Big), \label{BBS_eq:stabilityIQ_up} \\
 \Vert \mymatrix{\lambda}_{hp}\Vert_{0,\Omega} &\leq \frac{c_a}{\alpha} \, \Big( \Vert \myvec{f}\Vert_{V^*} + c_{tr} \, \Vert \myvec{g}\Vert_{-\frac{1}{2},\Gamma_N} \Big). \label{BBS_eq:stabilityIQ_lambda}
\end{align}
\end{subequations}
\end{theorem}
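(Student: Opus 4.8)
The plan is to treat \eqref{BBS_eq:discrete_mixed_variationalF} as a finite-dimensional saddle point problem and to establish uniqueness, existence and the two stability bounds in turn, relying on the ellipticity and continuity of $a(\cdot,\cdot)$, the inf-sup identity of Lemma~\ref{BBS_lem:discrete_inf_sup}, and the fact that $\Lambda_{hp}$ is a non-empty closed convex set containing $\mymatrix{0}$.

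For uniqueness I would take two solutions $(\myvec{u}_{hp}^i,\mymatrix{p}_{hp}^i,\mymatrix{\lambda}_{hp}^i)$, $i=1,2$, subtract the two equations \eqref{BBS_eq:discrete_mixed_variationalF_01}, and test the inequalities \eqref{BBS_eq:discrete_mixed_variationalF_02} cross-wise, i.e.~insert $\mymatrix{\mu}_{hp}=\mymatrix{\lambda}_{hp}^2$ into the inequality for the first solution and $\mymatrix{\mu}_{hp}=\mymatrix{\lambda}_{hp}^1$ into that for the second. Adding the two resulting inequalities yields $(\mymatrix{\lambda}_{hp}^2-\mymatrix{\lambda}_{hp}^1,\mymatrix{p}_{hp}^2-\mymatrix{p}_{hp}^1)_{0,\Omega}\geq 0$. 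Testing the subtracted equation \eqref{BBS_eq:discrete_mixed_variationalF_01} with $(\myvec{u}_{hp}^2-\myvec{u}_{hp}^1,\mymatrix{p}_{hp}^2-\mymatrix{p}_{hp}^1)$ then shows that $a(\cdot,\cdot)$ of this difference equals $-(\mymatrix{\lambda}_{hp}^2-\mymatrix{\lambda}_{hp}^1,\mymatrix{p}_{hp}^2-\mymatrix{p}_{hp}^1)_{0,\Omega}\leq 0$, so ellipticity forces $\myvec{u}_{hp}^2=\myvec{u}_{hp}^1$ and $\mymatrix{p}_{hp}^2=\mymatrix{p}_{hp}^1$. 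The subtracted equation \eqref{BBS_eq:discrete_mixed_variationalF_01} then reduces to $(\mymatrix{\lambda}_{hp}^2-\mymatrix{\lambda}_{hp}^1,\mymatrix{q}_{hp})_{0,\Omega}=0$ for all $\mymatrix{q}_{hp}\in Q_{hp}$, and the choice $\mymatrix{q}_{hp}=\mymatrix{\lambda}_{hp}^2-\mymatrix{\lambda}_{hp}^1$ (which realizes the inf-sup identity of Lemma~\ref{BBS_lem:discrete_inf_sup}) gives $\mymatrix{\lambda}_{hp}^2=\mymatrix{\lambda}_{hp}^1$.

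For existence I would eliminate $(\myvec{u}_{hp},\mymatrix{p}_{hp})$ in favour of $\mymatrix{\lambda}_{hp}$. For fixed $\mymatrix{\lambda}_{hp}\in Q_{hp}$, the Lax-Milgram lemma applied to \eqref{BBS_eq:discrete_mixed_variationalF_01} produces a unique affine solution map $\mymatrix{\lambda}_{hp}\mapsto(\myvec{u}_{hp}(\mymatrix{\lambda}_{hp}),\mymatrix{p}_{hp}(\mymatrix{\lambda}_{hp}))$; substituting it into \eqref{BBS_eq:discrete_mixed_variationalF_02} turns the problem into a variational inequality on $\Lambda_{hp}$ governed by the operator $\mymatrix{\lambda}_{hp}\mapsto-\mymatrix{p}_{hp}(\mymatrix{\lambda}_{hp})$. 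The computation from the uniqueness step shows this operator is strongly monotone: ellipticity bounds the pairing from below by $\alpha\,\Vert(\myvec{u}_{hp}^2-\myvec{u}_{hp}^1,\mymatrix{p}_{hp}^2-\mymatrix{p}_{hp}^1)\Vert^2$, while Lemma~\ref{BBS_lem:discrete_inf_sup} together with the continuity bound $c_a$ yields $\Vert\mymatrix{\lambda}_{hp}^2-\mymatrix{\lambda}_{hp}^1\Vert_{0,\Omega}\leq c_a\,\Vert(\myvec{u}_{hp}^2-\myvec{u}_{hp}^1,\mymatrix{p}_{hp}^2-\mymatrix{p}_{hp}^1)\Vert$, so that the coercivity constant of the reduced operator is $\alpha/c_a^2$. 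Since the operator is also Lipschitz and $\Lambda_{hp}$ is non-empty closed convex, the classical Lions-Stampacchia theorem supplies a solution $\mymatrix{\lambda}_{hp}$, hence a full solution triple.

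For the stability bounds I would test \eqref{BBS_eq:discrete_mixed_variationalF_01} with $(\myvec{u}_{hp},\mymatrix{p}_{hp})$ itself; because $\mymatrix{0}\in\Lambda_{hp}$, the choice $\mymatrix{\mu}_{hp}=\mymatrix{0}$ in \eqref{BBS_eq:discrete_mixed_variationalF_02} gives $(\mymatrix{\lambda}_{hp},\mymatrix{p}_{hp})_{0,\Omega}\geq 0$, so the coupling term may be discarded to obtain $\alpha\,\Vert(\myvec{u}_{hp},\mymatrix{p}_{hp})\Vert^2\leq\ell(\myvec{u}_{hp})$, and bounding $\ell(\myvec{u}_{hp})$ by duality and the trace theorem produces \eqref{BBS_eq:stabilityIQ_up}. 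For \eqref{BBS_eq:stabilityIQ_lambda} I would test \eqref{BBS_eq:discrete_mixed_variationalF_01} with $(\myvec{0},\mymatrix{\lambda}_{hp})$, so that $\ell(\myvec{0})=0$ and $\Vert\mymatrix{\lambda}_{hp}\Vert_{0,\Omega}^2=-a((\myvec{u}_{hp},\mymatrix{p}_{hp}),(\myvec{0},\mymatrix{\lambda}_{hp}))\leq c_a\,\Vert(\myvec{u}_{hp},\mymatrix{p}_{hp})\Vert\,\Vert\mymatrix{\lambda}_{hp}\Vert_{0,\Omega}$; dividing and inserting \eqref{BBS_eq:stabilityIQ_up} gives the claim. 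The main obstacle I anticipate is the existence step, where the strong monotonicity of the reduced operator must correctly combine the ellipticity lower bound with the inf-sup identity that transfers control from the plastic-strain difference back to the multiplier difference; uniqueness and the stability estimates are comparatively routine once the sign of the coupling term is fixed via $\mymatrix{0}\in\Lambda_{hp}$ and the cross-wise testing in \eqref{BBS_eq:discrete_mixed_variationalF_02}.
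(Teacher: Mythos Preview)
Your uniqueness argument and the two stability estimates coincide with the paper's proof almost verbatim: the paper also cross-tests \eqref{BBS_eq:discrete_mixed_variationalF_02}, subtracts \eqref{BBS_eq:discrete_mixed_variationalF_01}, uses ellipticity for the primal variables, and then inserts $(\myvec{0},\mymatrix{\lambda}_{hp}-\widetilde{\mymatrix{\lambda}}_{hp})$ for the dual; for stability it likewise uses $\mymatrix{\mu}_{hp}=\mymatrix{0}$ to discard the coupling term and obtains \eqref{BBS_eq:stabilityIQ_lambda} from \eqref{BBS_eq:discrete_mixed_variationalF_01} with $(\myvec{0},\mymatrix{\lambda}_{hp})$ (phrased through the inf-sup identity, but that is exactly your test).

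The genuine difference is in the existence step. The paper does not eliminate the primal variables; instead it rewrites \eqref{BBS_eq:discrete_mixed_variationalF} as a saddle point problem for the Lagrangian
\[
\mathcal{L}(\myvec{v}_{hp},\mymatrix{q}_{hp},\mymatrix{\mu}_{hp})
 = \tfrac{1}{2}\, a\big( (\myvec{v}_{hp},\mymatrix{q}_{hp}), (\myvec{v}_{hp},\mymatrix{q}_{hp}) \big) - \ell(\myvec{v}_{hp}) + (\mymatrix{\mu}_{hp},\mymatrix{q}_{hp})_{0,\Omega},
\]
verifies concavity/upper semicontinuity in $\mymatrix{\mu}_{hp}$ and convexity/lower semicontinuity/coercivity in $(\myvec{v}_{hp},\mymatrix{q}_{hp})$, and invokes the abstract saddle point existence result of Ekeland--T\'emam. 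Your route---solve \eqref{BBS_eq:discrete_mixed_variationalF_01} for $(\myvec{u}_{hp},\mymatrix{p}_{hp})$ via Lax--Milgram, substitute into \eqref{BBS_eq:discrete_mixed_variationalF_02}, and apply Lions--Stampacchia to the reduced operator $\mymatrix{\lambda}_{hp}\mapsto -\mymatrix{p}_{hp}(\mymatrix{\lambda}_{hp})$---is also correct: the strong monotonicity with constant $\alpha/c_a^2$ follows exactly as you outline, and Lipschitz continuity is immediate from the Lax--Milgram stability bound. The paper's approach is shorter because it outsources all structure to one citation and needs neither the inf-sup identity nor the monotonicity computation for existence; your approach is more self-contained and makes explicit use of Lemma~\ref{BBS_lem:discrete_inf_sup}, which is otherwise only needed for uniqueness of $\mymatrix{\lambda}_{hp}$ and for \eqref{BBS_eq:stabilityIQ_lambda}. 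One small point of phrasing: when you say ``the computation from the uniqueness step'' establishes strong monotonicity, note that the monotonicity argument uses only the subtracted equation \eqref{BBS_eq:discrete_mixed_variationalF_01}, not the cross-tested inequalities \eqref{BBS_eq:discrete_mixed_variationalF_02}; this matters because at that stage $\mymatrix{\lambda}_{hp}^1,\mymatrix{\lambda}_{hp}^2$ are arbitrary elements of $Q_{hp}$, not yet solutions of any variational inequality.
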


\begin{proof}
The mixed formulation \eqref{BBS_eq:discrete_mixed_variationalF} is equivalent to the saddle point problem: Find a
triple $(\myvec{u}_{hp},\mymatrix{p}_{hp},\mymatrix{\lambda}_{hp})\in V_{hp}\times Q_{hp}\times\Lambda_{hp}$ such that
\begin{align*}
 \mathcal{L}(\myvec{u}_{hp},\mymatrix{p}_{hp},\mymatrix{\mu}_{hp})
 \leq \mathcal{L}(\myvec{u}_{hp},\mymatrix{p}_{hp},\mymatrix{\lambda}_{hp})
 \leq \mathcal{L}(\myvec{v}_{hp},\mymatrix{q}_{hp},\mymatrix{\lambda}_{hp}) \qquad 
 \forall \, (\myvec{v}_{hp},\mymatrix{q}_{hp},\mymatrix{\mu}_{hp})\in V_{hp}\times Q_{hp}\times \Lambda_{hp}
\end{align*}
with the Lagrangian $\mathcal{L}:V_{hp}\times Q_{hp}\times \Lambda_{hp}\rightarrow \myspace{R}$, which is given by
\begin{align*}
 \mathcal{L}(\myvec{v}_{hp},\mymatrix{q}_{hp},\mymatrix{\mu}_{hp}) 
 := \frac{1}{2} \, a\big( (\myvec{v}_{hp},\mymatrix{q}_{hp}), (\myvec{v}_{hp},\mymatrix{q}_{hp}) \big) - \ell(\myvec{v}_{hp}) + (\mymatrix{\mu}_{hp},\mymatrix{q}_{hp})_{0,\Omega},
\end{align*}
see e.g.~\cite{Ekeland_1976}. Since $a(\cdot,\cdot)$ is elliptic and continuous, we easily find that $\mathcal{L}(\myvec{v}_{hp},\mymatrix{q}_{hp},\mymatrix{\mu}_{hp}) $ is concave and upper semi-continuous in $\mymatrix{\mu}_{hp}$, and convex, lower semi-continuous and coercive in $(\myvec{v}_{hp}, \mymatrix{q}_{hp})$. As $\Lambda_{hp}$ is a non-empty, convex and closed subset of $Q$, \cite[Prop.~2.3, Ch.VI]{Ekeland_1976} guarantees the existence of the discrete solution. To show the uniqueness let $(\myvec{u}_{hp},\mymatrix{p}_{hp},\mymatrix{\lambda}_{hp})$ and $(\widetilde{\myvec{u}}_{hp},\widetilde{\mymatrix{p}}_{hp},\widetilde{\mymatrix{\lambda}}_{hp})$ be two solutions of \eqref{BBS_eq:discrete_mixed_variationalF}. Subtracting the two equations obtained from \eqref{BBS_eq:discrete_mixed_variationalF_01} with the test function $(\myvec{u}_{hp}-\widetilde{\myvec{u}}_{hp},\mymatrix{p}_{hp}-\widetilde{\mymatrix{p}}_{hp})$ from each other leads to
\begin{align*}
 a\big( (\myvec{u}_{hp}-\widetilde{\myvec{u}}_{hp},\mymatrix{p}_{hp}-\widetilde{\mymatrix{p}}_{hp}),(\myvec{u}_{hp}-\widetilde{\myvec{u}}_{hp},\mymatrix{p}_{hp}-\widetilde{\mymatrix{p}}_{hp}) \big) 
 = - (\mymatrix{\lambda}_{hp}-\widetilde{\mymatrix{\lambda}}_{hp},\mymatrix{p}_{hp}-\widetilde{\mymatrix{p}}_{hp})_{0,\Omega} 
 \leq 0,
\end{align*}
where the inequality follows by adding the two inequalities \eqref{BBS_eq:discrete_mixed_variationalF_02} with the test function $\widetilde{\mymatrix{\lambda}}_{hp}$, $\mymatrix{\lambda}_{hp}$, respectively. The ellipticity of $a(\cdot,\cdot)$ now implies uniqueness of the primal variable $(\myvec{u}_{hp},\mymatrix{p}_{hp})$. With that at hand, subtracting again the two equations \eqref{BBS_eq:discrete_mixed_variationalF_01} from each other with the test function $(\myvec{0},\mymatrix{\lambda}_{hp}-\widetilde{\mymatrix{\lambda}}_{hp})$ yields
\begin{align*}
 (\mymatrix{\lambda}_{hp}-\widetilde{\mymatrix{\lambda}}_{hp} ,\mymatrix{\lambda}_{hp}-\widetilde{\mymatrix{\lambda}}_{hp} )_{0,\Omega}
 = 0.
\end{align*}
This implies the remaining uniqueness of the dual variable $\mymatrix{\lambda}_{hp}$ (which can be also deduced from the discrete inf-sup condition \eqref{BBS_eq:babusca_brezzi_cond}). \\

To show the stability estimates we choose the test function $(\myvec{u}_{hp},\mymatrix{p}_{hp},\mymatrix{0})$ in \eqref{BBS_eq:discrete_mixed_variationalF} and obtain
\begin{align*}
 \alpha \, \Vert (\myvec{u}_{hp},\mymatrix{p}_{hp}) \Vert^2 
 &\leq a\big( (\myvec{u}_{hp},\mymatrix{p}_{hp}),(\myvec{u}_{hp},\mymatrix{p}_{hp}) \big) \\
 &= \ell(\myvec{u}_{hp}) - (\mymatrix{\lambda}_{hp}, \mymatrix{p}_{hp})_{0,\Omega} \\
 &\leq \ell(\myvec{u}_{hp}) \\
 &\leq \Big( \Vert \myvec{f}\Vert_{V^*} + c_{tr} \, \Vert \myvec{g}\Vert_{-\frac{1}{2},\Gamma_N} \Big) \Vert \myvec{u}_{hp}\Vert_{1,\Omega}.
\end{align*}
Hence, we deduce
\begin{align*}
 \Vert (\myvec{u}_{hp},\mymatrix{p}_{hp}) \Vert 
 \leq \frac{1}{\alpha} \, \Big( \Vert \myvec{f}\Vert_{V^*} + c_{tr} \, \Vert \myvec{g}\Vert_{-\frac{1}{2},\Gamma_N} \Big),
\end{align*}
which is \eqref{BBS_eq:stabilityIQ_up}. Next, the discrete inf-sup condition \eqref{BBS_eq:babusca_brezzi_cond} together with \eqref{BBS_eq:discrete_mixed_variationalF_01} gives
\begin{align*}
 \Vert \mymatrix{\lambda}_{hp}\Vert_{0,\Omega}
 &= \sup_{\mymatrix{q}_{hp}\in Q_{hp}\setminus\lbrace \mymatrix{0}\rbrace} \frac{(\mymatrix{\lambda}_{hp},\mymatrix{q}_{hp})_{0,\Omega}}{\Vert \mymatrix{q}_{hp}\Vert_{0,\Omega}} 
 = \sup_{\mymatrix{q}_{hp}\in Q_{hp}\setminus\lbrace \mymatrix{0}\rbrace} \frac{ - a\big( (\myvec{u}_{hp},\mymatrix{p}_{hp}),(\myvec{o},\mymatrix{q}_{hp}) \big)}{\Vert \mymatrix{q}_{hp}\Vert_{0,\Omega}} 
 \leq c_a \, \Vert (\myvec{u}_{hp}, \mymatrix{p}_{hp})\Vert.
\end{align*}
Combining the last two estimates finally proves \eqref{BBS_eq:stabilityIQ_lambda}.
\end{proof}

The discrete mixed problem \eqref{BBS_eq:discrete_mixed_variationalF} may be solved with an Uzawa algorithm as \eqref{BBS_eq:discrete_mixed_variationalF_02} implies
\begin{align*}
 \mymatrix{\lambda}_{hp} 
 = \Pi_{\Lambda_{hp}} \left(\mymatrix{\lambda}_{hp} + \rho \, \mymatrix{p}_{hp} \right) \qquad 
 \forall \, \rho >0,
\end{align*}
where $\Pi_{\Lambda_{hp}}$ represents the $L^2$-projection on $\Lambda_{hp}$. Alternatively, one may use the semi-smooth Newton method, introduced in \cite{Bammer2022Icosahom}, which has super-linear convergence properties.

\medskip


\section{A Priori Error Analysis}\label{BBS_sec:a_priori_analysis}

The following a priori error estimate (Theorem~\ref{BBS_prop:apriori_estimate_hild}) is the foundation of the convergence analysis presented in this section. It mainly follows the ideas in \cite{hild2002quadratic}, where unilateral contact problems and a discretization with quadratic finite elements are considered.

\begin{theorem}\label{BBS_prop:apriori_estimate_hild}
There exist two positive constants $c_1$ and $c_2$ such that 
\begin{align*}
   \Vert (\myvec{u}-\myvec{u}_{hp}, \mymatrix{p}-\mymatrix{p}_{hp}) \Vert^2  + \Vert\mymatrix{\lambda}-\mymatrix{\lambda}_{hp}\Vert_{0,\Omega}^2
 & \leq  c_1 \, \Big( \Vert (\myvec{u}-\myvec{v}_{hp}, \mymatrix{p}-\mymatrix{q}_{hp}) \Vert^2 +  \Vert \mymatrix{\lambda}-\mymatrix{\mu}_{hp}\Vert_{0,\Omega}^2 \Big)  \\
 & \qquad + c_2 \, (\mymatrix{\lambda}_{hp}-\mymatrix{\mu} + \mymatrix{\lambda}-\mymatrix{\mu}_{hp}, \mymatrix{p})_{0,\Omega} 
\end{align*}
for all $(\myvec{v}_{hp},\mymatrix{q}_{hp},\mymatrix{\mu}_{hp})\in V_{hp}\times Q_{hp}\times \Lambda_{hp}$ and all $\mymatrix{\mu}\in\Lambda$.
\end{theorem}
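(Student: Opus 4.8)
The plan is to derive a Céa-type bound by combining the coercivity of $a(\cdot,\cdot)$ with the two mixed formulations, carefully tracking the consistency errors that arise from the non-conformity $\Lambda_{hp}\not\subseteq\Lambda$. First I would set $\myvec{e}_u := \myvec{u}-\myvec{u}_{hp}$, $\mymatrix{e}_p := \mymatrix{p}-\mymatrix{p}_{hp}$ and $\mymatrix{e}_\lambda := \mymatrix{\lambda}-\mymatrix{\lambda}_{hp}$, and start from ellipticity, writing
\begin{align*}
 \alpha \, \Vert(\myvec{e}_u,\mymatrix{e}_p)\Vert^2
 \leq a\big((\myvec{e}_u,\mymatrix{e}_p),(\myvec{e}_u,\mymatrix{e}_p)\big)
 = a\big((\myvec{e}_u,\mymatrix{e}_p),(\myvec{u}-\myvec{v}_{hp},\mymatrix{p}-\mymatrix{q}_{hp})\big) + a\big((\myvec{e}_u,\mymatrix{e}_p),(\myvec{v}_{hp}-\myvec{u}_{hp},\mymatrix{q}_{hp}-\mymatrix{p}_{hp})\big)
\end{align*}
for arbitrary $(\myvec{v}_{hp},\mymatrix{q}_{hp})\in V_{hp}\times Q_{hp}$. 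The first term is handled by continuity ($c_a$) and Young's inequality, absorbing $\tfrac{\alpha}{2}\Vert(\myvec{e}_u,\mymatrix{e}_p)\Vert^2$ into the left-hand side. For the second term I would subtract the discrete equation \eqref{BBS_eq:discrete_mixed_variationalF_01} from the continuous equation \eqref{BBS_eq:mixed_variationalF_01}, both tested against the discrete function $(\myvec{v}_{hp}-\myvec{u}_{hp},\mymatrix{q}_{hp}-\mymatrix{p}_{hp})\in V_{hp}\times Q_{hp}$, to obtain the Galerkin-type orthogonality
\begin{align*}
 a\big((\myvec{e}_u,\mymatrix{e}_p),(\myvec{v}_{hp}-\myvec{u}_{hp},\mymatrix{q}_{hp}-\mymatrix{p}_{hp})\big)
 = -\,(\mymatrix{e}_\lambda,\mymatrix{q}_{hp}-\mymatrix{p}_{hp})_{0,\Omega}.
\end{align*}

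Next I would insert $\mymatrix{\mu}_{hp}$ and the continuous test function $\mymatrix{q}_{hp}-\mymatrix{p}$ in order to split the coupling term into discretization and consistency contributions. The key is to write
\begin{align*}
 -(\mymatrix{e}_\lambda,\mymatrix{q}_{hp}-\mymatrix{p}_{hp})_{0,\Omega}
 = -(\mymatrix{e}_\lambda,\mymatrix{q}_{hp}-\mymatrix{p})_{0,\Omega} - (\mymatrix{e}_\lambda,\mymatrix{p}-\mymatrix{p}_{hp})_{0,\Omega},
\end{align*}
where the first summand is bounded by continuity arguments (after adding and subtracting $\mymatrix{\mu}_{hp}$ inside $\mymatrix{e}_\lambda$ and using $\mymatrix{\lambda}-\mymatrix{\mu}_{hp}$ together with $\mymatrix{\mu}_{hp}-\mymatrix{\lambda}_{hp}$), while the crucial second summand $-(\mymatrix{e}_\lambda,\mymatrix{e}_p)_{0,\Omega}$ must be estimated using the two variational inequalities \eqref{BBS_eq:mixed_variationalF_02} and \eqref{BBS_eq:discrete_mixed_variationalF_02}. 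Here I would test \eqref{BBS_eq:mixed_variationalF_02} with $\mymatrix{\mu}\in\Lambda$ and \eqref{BBS_eq:discrete_mixed_variationalF_02} with $\mymatrix{\mu}_{hp}\in\Lambda_{hp}$, obtaining $(\mymatrix{\mu}-\mymatrix{\lambda},\mymatrix{p})_{0,\Omega}\leq 0$ and $(\mymatrix{\mu}_{hp}-\mymatrix{\lambda}_{hp},\mymatrix{p}_{hp})_{0,\Omega}\leq 0$. Adding and rearranging these, together with the algebraic identity
\begin{align*}
 -(\mymatrix{e}_\lambda,\mymatrix{e}_p)_{0,\Omega}
 = (\mymatrix{\lambda}-\mymatrix{\mu}_{hp},\mymatrix{p})_{0,\Omega} + (\mymatrix{\lambda}_{hp}-\mymatrix{\mu},\mymatrix{p})_{0,\Omega}
 + (\mymatrix{\mu}-\mymatrix{\lambda},\mymatrix{p})_{0,\Omega} + (\mymatrix{\mu}_{hp}-\mymatrix{\lambda}_{hp},\mymatrix{p}_{hp})_{0,\Omega} - (\mymatrix{e}_\lambda,\mymatrix{q}_{hp}-\mymatrix{p})_{0,\Omega},
\end{align*}
produces exactly the consistency term $c_2\,(\mymatrix{\lambda}_{hp}-\mymatrix{\mu}+\mymatrix{\lambda}-\mymatrix{\mu}_{hp},\mymatrix{p})_{0,\Omega}$ appearing in the statement, after the two nonpositive inequality contributions are dropped.

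Once $\Vert(\myvec{e}_u,\mymatrix{e}_p)\Vert$ is controlled, I would bound $\Vert\mymatrix{e}_\lambda\Vert_{0,\Omega}$ via the triangle inequality $\Vert\mymatrix{e}_\lambda\Vert_{0,\Omega}\leq\Vert\mymatrix{\lambda}-\mymatrix{\mu}_{hp}\Vert_{0,\Omega}+\Vert\mymatrix{\mu}_{hp}-\mymatrix{\lambda}_{hp}\Vert_{0,\Omega}$, estimating the discrete part $\Vert\mymatrix{\mu}_{hp}-\mymatrix{\lambda}_{hp}\Vert_{0,\Omega}$ through the inf-sup condition \eqref{BBS_eq:babusca_brezzi_cond} of Lemma~\ref{BBS_lem:discrete_inf_sup}: since $\mymatrix{\mu}_{hp}-\mymatrix{\lambda}_{hp}\in Q_{hp}$, its norm equals the supremum of $(\mymatrix{\mu}_{hp}-\mymatrix{\lambda}_{hp},\mymatrix{q}_{hp})_{0,\Omega}/\Vert\mymatrix{q}_{hp}\Vert_{0,\Omega}$, and the numerator can be rewritten via the difference of the two first equations as $a\big((\myvec{e}_u,\mymatrix{e}_p),(\myvec{o},\mymatrix{q}_{hp})\big)+(\mymatrix{\mu}_{hp}-\mymatrix{\lambda},\mymatrix{q}_{hp})_{0,\Omega}$, bounded by continuity and Cauchy–Schwarz in terms of $\Vert(\myvec{e}_u,\mymatrix{e}_p)\Vert$ and $\Vert\mymatrix{\lambda}-\mymatrix{\mu}_{hp}\Vert_{0,\Omega}$. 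Feeding the already-established bound on the primal error back in and applying Young's inequality repeatedly collects everything into the asserted form with suitable constants $c_1,c_2$. The main obstacle I anticipate is the bookkeeping in the consistency term: because $\mymatrix{\mu}_{hp}\notin\Lambda$ in general, one cannot simply invoke $(\mymatrix{\mu}_{hp}-\mymatrix{\lambda},\mymatrix{p})_{0,\Omega}\leq 0$, so the two sign-indefinite cross terms involving $\mymatrix{p}$ must be retained explicitly and packaged into $c_2\,(\mymatrix{\lambda}_{hp}-\mymatrix{\mu}+\mymatrix{\lambda}-\mymatrix{\mu}_{hp},\mymatrix{p})_{0,\Omega}$ rather than absorbed, which is precisely why this quantity survives in the final estimate and later forces the factor-of-$2$ reduction in the guaranteed rates.
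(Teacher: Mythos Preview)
Your plan is essentially the paper's proof: Galerkin orthogonality from subtracting \eqref{BBS_eq:mixed_variationalF_01} and \eqref{BBS_eq:discrete_mixed_variationalF_01}, the inf--sup identity of Lemma~\ref{BBS_lem:discrete_inf_sup} together with the triangle inequality to control $\Vert\mymatrix{\lambda}-\mymatrix{\lambda}_{hp}\Vert_{0,\Omega}$ in terms of the primal error and $\Vert\mymatrix{\lambda}-\mymatrix{\mu}_{hp}\Vert_{0,\Omega}$, and the two variational inequalities \eqref{BBS_eq:mixed_variationalF_02}, \eqref{BBS_eq:discrete_mixed_variationalF_02} to handle $-(\mymatrix{e}_\lambda,\mymatrix{e}_p)_{0,\Omega}$. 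The order in which you do the primal and dual estimates is immaterial, since the circular dependence is broken by Young's inequality either way.

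However, your displayed ``algebraic identity'' is incorrect as written. The first four terms on the right sum to $(\mymatrix{\lambda}_{hp}-\mymatrix{\mu}_{hp},\mymatrix{e}_p)_{0,\Omega}$, not to $-(\mymatrix{e}_\lambda,\mymatrix{e}_p)_{0,\Omega}$, and the appended term $-(\mymatrix{e}_\lambda,\mymatrix{q}_{hp}-\mymatrix{p})_{0,\Omega}$ does not repair this. The correct exact decomposition is
\begin{align*}
 -(\mymatrix{e}_\lambda,\mymatrix{e}_p)_{0,\Omega}
 = (\mymatrix{\lambda}-\mymatrix{\mu}_{hp},\mymatrix{p}_{hp})_{0,\Omega}
 + (\mymatrix{\lambda}_{hp}-\mymatrix{\mu},\mymatrix{p})_{0,\Omega}
 + (\mymatrix{\mu}-\mymatrix{\lambda},\mymatrix{p})_{0,\Omega}
 + (\mymatrix{\mu}_{hp}-\mymatrix{\lambda}_{hp},\mymatrix{p}_{hp})_{0,\Omega},
\end{align*}
i.e.\ the first summand carries $\mymatrix{p}_{hp}$, not $\mymatrix{p}$. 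After dropping the two nonpositive VI contributions and splitting $(\mymatrix{\lambda}-\mymatrix{\mu}_{hp},\mymatrix{p}_{hp})_{0,\Omega}=(\mymatrix{\lambda}-\mymatrix{\mu}_{hp},\mymatrix{p})_{0,\Omega}+(\mymatrix{\lambda}-\mymatrix{\mu}_{hp},\mymatrix{p}_{hp}-\mymatrix{p})_{0,\Omega}$, the residual cross term is $(\mymatrix{\lambda}-\mymatrix{\mu}_{hp},\mymatrix{p}_{hp}-\mymatrix{p})_{0,\Omega}$, which is absorbed by Young's inequality into $\Vert\mymatrix{\lambda}-\mymatrix{\mu}_{hp}\Vert_{0,\Omega}^2$ and $\varepsilon\,\Vert\mymatrix{p}-\mymatrix{p}_{hp}\Vert_{0,\Omega}^2$. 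With this correction your argument goes through and coincides with the paper's.
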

 
\begin{proof}
Subtracting \eqref{BBS_eq:discrete_mixed_variationalF_01} from \eqref{BBS_eq:mixed_variationalF_01} gives
\begin{align}\label{BBS_eq:proof_aPriori_hild_01}
 a\big( (\myvec{u}-\myvec{u}_{hp},\mymatrix{p}-\mymatrix{p}_{hp}), (\myvec{v}_{hp},\mymatrix{q}_{hp}) \big) + (\mymatrix{\lambda}-\mymatrix{\lambda}_{hp}, \mymatrix{q}_{hp})_{0,\Omega} 
 = 0.
\end{align}
Thus, $\mymatrix{\mu}_{hp} - \mymatrix{\lambda}_{hp}\in Q_{hp}$, the Cauchy-Schwarz inequality and the continuity of $a(\cdot,\cdot)$ yield
\begin{align*}
 \Vert \mymatrix{\mu}_{hp}-\mymatrix{\lambda}_{hp}\Vert_{0,\Omega} 
 &= \sup_{\mymatrix{q}_{hp}\in Q_{hp}\setminus\lbrace \mymatrix{0} \rbrace} \frac{(\mymatrix{\mu}_{hp}-\mymatrix{\lambda}_{hp}, \mymatrix{q}_{hp})_{0,\Omega}}{\Vert \mymatrix{q}_{hp}\Vert_{0,\Omega}} \\
 &=\sup_{\mymatrix{q}_{hp}\in Q_{hp}\setminus\lbrace \mymatrix{0} \rbrace} \frac{(\mymatrix{\mu}_{hp}-\mymatrix{\lambda}, \mymatrix{q}_{hp})_{0,\Omega} + (\mymatrix{\lambda}-\mymatrix{\lambda}_{hp}, \mymatrix{q}_{hp})_{0,\Omega}}{\Vert \mymatrix{q}_{hp}\Vert_{0,\Omega}} \\
 &=\sup_{\mymatrix{q}_{hp}\in Q_{hp}\setminus\lbrace \mymatrix{0} \rbrace} \frac{(\mymatrix{\mu}_{hp}-\mymatrix{\lambda}, \mymatrix{q}_{hp})_{0,\Omega} - a\big( (\myvec{u}-\myvec{u}_{hp},\mymatrix{p}-\mymatrix{p}_{hp}), (\myvec{o},\mymatrix{q}_{hp}) \big)}{\Vert \mymatrix{q}_{hp}\Vert_{0,\Omega}} \\
 &\leq \Vert \mymatrix{\lambda}-\mymatrix{\mu}_{hp}\Vert_{0,\Omega} + c_a \, \Vert (\myvec{u}-\myvec{u}_{hp}, \mymatrix{p}-\mymatrix{p}_{hp}) \Vert.
\end{align*}
Now, the triangle inequality implies
\begin{align}\label{eq:error_lambda_est_part1}
 \Vert \mymatrix{\lambda}-\mymatrix{\lambda}_{hp}\Vert_{0,\Omega}
 \leq \Vert \mymatrix{\lambda}-\mymatrix{\mu}_{hp}\Vert_{0,\Omega} + \Vert \mymatrix{\mu}_{hp}-\mymatrix{\lambda}_{hp}\Vert_{0,\Omega}
 \leq 2 \, \Vert \mymatrix{\lambda}-\mymatrix{\mu}_{hp}\Vert_{0,\Omega} + c_a \, \Vert (\myvec{u}-\myvec{u}_{hp}, \mymatrix{p}-\mymatrix{p}_{hp}) \Vert,
\end{align}
from which we deduce
\begin{align}\label{BBS_eq:proof_aPriori_hild_03}
 \Vert (\myvec{u}-\myvec{u}_{hp}, \mymatrix{p}-\mymatrix{p}_{hp}) \Vert^2 + \Vert \mymatrix{\lambda}-\mymatrix{\lambda}_{hp}\Vert_{0,\Omega}^2 
 \leq  (1 + 2 c_a^2) \,\Vert (\myvec{u}-\myvec{u}_{hp}, \mymatrix{p}-\mymatrix{p}_{hp}) \Vert^2 + 8 \, \Vert \mymatrix{\lambda}-\mymatrix{\mu}_{hp}\Vert_{0,\Omega}^2.
\end{align} 
From \eqref{BBS_eq:mixed_variationalF_02}, \eqref{BBS_eq:discrete_mixed_variationalF_02} and Young's inequality we obtain for any $\varepsilon>0$, $\mymatrix{\mu} \in \Lambda$ and $\mymatrix{\mu}_{hp} \in \Lambda_{hp}$ that
\begin{align*}
 (\mymatrix{\lambda}_{hp}-\mymatrix{\lambda}, \mymatrix{p}-\mymatrix{p}_{hp})_{0,\Omega}
 &\leq (\mymatrix{\lambda}_{hp}-\mymatrix{\lambda}, \mymatrix{p}-\mymatrix{p}_{hp})_{0,\Omega} + (\mymatrix{\lambda}-\mymatrix{\mu},\mymatrix{p})_{0,\Omega} + (\mymatrix{\lambda}_{hp}-\mymatrix{\mu}_{hp}, \mymatrix{p}_{hp})_{0,\Omega} \\
 &= (\mymatrix{\lambda}_{hp}-\mymatrix{\mu}, \mymatrix{p})_{0,\Omega} + (\mymatrix{\lambda}-\mymatrix{\mu}_{hp}, \mymatrix{p}_{hp})_{0,\Omega} \\
 &= (\mymatrix{\lambda}_{hp}-\mymatrix{\mu}, \mymatrix{p})_{0,\Omega} + (\mymatrix{\lambda}-\mymatrix{\mu}_{hp}, \mymatrix{p})_{0,\Omega} + (\mymatrix{\lambda}-\mymatrix{\mu}_{hp}, \mymatrix{p}_{hp}-\mymatrix{p})_{0,\Omega} \\
 &\leq (\mymatrix{\lambda}_{hp}-\mymatrix{\mu} + \mymatrix{\lambda}-\mymatrix{\mu}_{hp}, \mymatrix{p})_{0,\Omega} + \varepsilon \, \Vert \mymatrix{p}-\mymatrix{p}_{hp}\Vert_{0,\Omega}^2 + \frac{1}{4\varepsilon} \, \Vert \mymatrix{\lambda}-\mymatrix{\mu}_{hp}\Vert_{0,\Omega}^2.
\end{align*}
Hence, by using \eqref{BBS_eq:proof_aPriori_hild_01} and applying Young's inequality we obtain
\begin{align*}
 \alpha \, \Vert (\myvec{u}-\myvec{u}_{hp}, \mymatrix{p}-\mymatrix{p}_{hp}) \Vert^2 
 &\leq a\big( (\myvec{u}-\myvec{u}_{hp}, \mymatrix{p}-\mymatrix{p}_{hp}), (\myvec{u}-\myvec{u}_{hp}, \mymatrix{p}-\mymatrix{p}_{hp}) \big) \\
 &= a\big( (\myvec{u}-\myvec{u}_{hp}, \mymatrix{p}-\mymatrix{p}_{hp}), (\myvec{u}-\myvec{v}_{hp}, \mymatrix{p}-\mymatrix{q}_{hp}) \big) - (\mymatrix{\lambda}-\mymatrix{\lambda}_{hp}, \mymatrix{q}_{hp}-\mymatrix{p}_{hp})_{0,\Omega} \\
 &= a\big( (\myvec{u}-\myvec{u}_{hp}, \mymatrix{p}-\mymatrix{p}_{hp}), (\myvec{u}-\myvec{v}_{hp}, \mymatrix{p}-\mymatrix{q}_{hp}) \big) + (\mymatrix{\lambda}-\mymatrix{\lambda}_{hp}, \mymatrix{p}-\mymatrix{q}_{hp})_{0,\Omega}\\ 
 & \qquad  - (\mymatrix{\lambda}-\mymatrix{\lambda}_{hp}, \mymatrix{p}-\mymatrix{p}_{hp})_{0,\Omega} \\
 &\leq c_a \, \Vert (\myvec{u}-\myvec{u}_{hp}, \mymatrix{p}-\mymatrix{p}_{hp}) \Vert \, \Vert (\myvec{u}-\myvec{v}_{hp}, \mymatrix{p}-\mymatrix{q}_{hp}) \Vert + \Vert \mymatrix{\lambda}-\mymatrix{\lambda}_{hp}\Vert_{0,\Omega} \, \Vert \mymatrix{p}-\mymatrix{q}_{hp}\Vert_{0,\Omega}\\
 & \qquad  + (\mymatrix{\lambda}_{hp}-\mymatrix{\lambda}, \mymatrix{p}-\mymatrix{p}_{hp})_{0,\Omega} \\
 & \leq \varepsilon \, \Vert (\myvec{u}-\myvec{u}_{hp}, \mymatrix{p}-\mymatrix{p}_{hp}) \Vert^2 + \frac{c_a^2}{4\varepsilon} \, \Vert (\myvec{u}-\myvec{v}_{hp}, \mymatrix{p}-\mymatrix{q}_{hp}) \Vert^2 + \varepsilon \, \Vert \mymatrix{\lambda}-\mymatrix{\lambda}_{hp}\Vert_{0,\Omega}^2 \\
 & \qquad + \frac{1}{4\varepsilon} \, \Vert \mymatrix{p}-\mymatrix{q}_{hp}\Vert_{0,\Omega}^2+(\mymatrix{\lambda}_{hp}-\mymatrix{\mu} + \mymatrix{\lambda}-\mymatrix{\mu}_{hp}, \mymatrix{p})_{0,\Omega} + \varepsilon \, \Vert \mymatrix{p}-\mymatrix{p}_{hp}\Vert_{0,\Omega}^2 \\ 
 & \qquad \qquad + \frac{1}{4\varepsilon} \, \Vert \mymatrix{\lambda}-\mymatrix{\mu}_{hp}\Vert_{0,\Omega}^2,
\end{align*}
from which we deduce, by inserting \eqref{eq:error_lambda_est_part1}, that
\begin{align*}
 \big(\alpha-2(1+c_a^2)\varepsilon\big) \, \Vert (\myvec{u}-\myvec{u}_{hp}, \mymatrix{p}-\mymatrix{p}_{hp}) \Vert^2 
 & \leq  \frac{1+c_a^2}{4\varepsilon} \, \Vert (\myvec{u}-\myvec{v}_{hp}, \mymatrix{p}-\mymatrix{q}_{hp}) \Vert^2 + \left(8\varepsilon + \frac{1}{4\varepsilon} \right) \, \Vert \mymatrix{\lambda}-\mymatrix{\mu}_{hp}\Vert_{0,\Omega}^2  \\
 & \qquad +(\mymatrix{\lambda}_{hp}-\mymatrix{\mu} + \mymatrix{\lambda}-\mymatrix{\mu}_{hp}, \mymatrix{p})_{0,\Omega} .
\end{align*}
For any $\varepsilon < \alpha \, (2 + 2c_a^2)^{-1}$ we therefore obtain
\begin{align*}
 \Vert (\myvec{u}-\myvec{u}_{hp}, \mymatrix{p}-\mymatrix{p}_{hp}) \Vert^2 
 & \leq \frac{1 + c_a^2}{4\varepsilon c_\varepsilon} \, \Vert (\myvec{u}-\myvec{v}_{hp}, \mymatrix{p}-\mymatrix{q}_{hp}) \Vert^2 + \frac{1 + 32\varepsilon^2}{4\varepsilon c_\varepsilon} \, \Vert \mymatrix{\lambda}-\mymatrix{\mu}_{hp}\Vert_{0,\Omega}^2  \\
 & \qquad + \frac{1}{c_\varepsilon} \, (\mymatrix{\lambda}_{hp}-\mymatrix{\mu} + \mymatrix{\lambda}-\mymatrix{\mu}_{hp}, \mymatrix{p})_{0,\Omega}
\end{align*}
with $c_\varepsilon := \alpha - 2\varepsilon(1 + c_a^2)$, which together with \eqref{BBS_eq:proof_aPriori_hild_03} yields the assertion with the constants
\begin{align*}
 c_1 := \max \left\lbrace (1+2c_a^2) \, \frac{1 + c_a^2}{4\varepsilon c_\varepsilon}, \; (1+2c_a^2) \left( \frac{1 + 32\varepsilon^2}{4\varepsilon c_\varepsilon} + 8 \right) \right\rbrace, \qquad
 c_2 := \frac{1 + 2c_a^2}{c_\varepsilon}.
\end{align*}
\end{proof}

In the following, we make use of an equivalent representation of the set $\Lambda_{hp}$ of discrete Lagrange multipliers. For this purpose, let $\hat{\omega}_{k,T}\in\myspace{R}$ be the positive weight associated with the tensor product Gauss quadrature point $\hat{\myvec{x}}_{k,T}\in\widehat{T}$ for $1\leq k\leq n_T$ and $T\in\mathcal{T}_h$. Moreover, let $\{\hat{\phi}_{k,T}\}_{k=1,\ldots,n_T}$ be the Lagrange basis functions on $\widehat{T}$ defined via these Gauss points, i.e. 
\begin{align*}
 \hat{\phi}_{k,T}\in\myspace{P}_{p_T-1}(\widehat{T}),
 \quad 
 \hat{\phi}_{k,T}(\hat{\myvec{x}}_{l,T}) = \delta_{kl} \qquad 
 \forall \, 1\leq k,l\leq n_T \quad \forall \, T\in\mathcal{T}_h,
\end{align*}
where $\delta_{kl}$ is the usual Kronecker delta symbol. These basis functions allow to define the nodal interpolation operator $\mathcal{J}_{hp}(\cdot)$ element wise by
\begin{align}\label{BBS_eq:interpolOperator_Jhp}
 \mathcal{J}_{hp}(q)_{\, | \, T} 
 := \sum_{k=1}^{n_T} q(\myvec{F}_T(\hat{\myvec{x}}_{k,T}))\, \hat{\phi}_{k,T}\circ\myvec{F}_T^{-1} \qquad 
 \forall \, T\in\mathcal{T}_h
\end{align}
for $q\in L^2(\Omega)$ with $q_{\, | \, T}\in C^0(T)$ for $T\in\mathcal{T}_h$. Thereby, we define the discretized plasticity functional $\psi_{hp}: Q_{hp}\rightarrow\myspace{R}$ as
\begin{align} \label{eq:def_psi_hp}
 \psi_{hp}(\mymatrix{q}_{hp}) 
 &:= \int_{\Omega} \sigma_y \, \mathcal{J}_{hp}(\vert \mymatrix{q}_{hp}\vert_F) \mathrm{\, d}\myvec{x}.
\end{align}
To obtain a directly implementable representation of $\psi_{hp}(\cdot)$ we first note that
\begin{align*}
 \psi_{hp}(\mymatrix{q}_{hp}) 
 & = \sum_{T\in\mathcal{T}_h} \int_T \sigma_y \, \mathcal{J}_{hp}(\vert \mymatrix{q}_{hp}\vert_F) \mathrm{\, d}\myvec{x}
 = \sum_{T\in\mathcal{T}_h} \sum_{k=1}^{n_T} \sigma_y \, \big\vert \mymatrix{q}_{hp}\big(\myvec{F}_T(\hat{\myvec{x}}_{k,T})\big)\big\vert_F \int_T \hat{\phi}_{k,T}\circ\myvec{F}_T^{-1}(\myvec{x}) \mathrm{\, d}\myvec{x}.
\end{align*}
If $p_T = 1$ there holds
\begin{align*}
 \int_T \hat{\phi}_{1,T}\circ\myvec{F}_T^{-1}(\myvec{x}) \mathrm{\, d}\myvec{x} 
 = |T|
\end{align*}
as $\hat{\phi}_{1,T}\equiv 1$. If $p_T\geq 2$ the exactness of the Gauss quadrature yields
\begin{align*}
 \int_T \hat{\phi}_{k,T}\circ\myvec{F}_T^{-1}(\myvec{x}) \mathrm{\, d}\myvec{x} 
 &= \int_{\widehat{T}} \hat{\phi}_{k,T}(\hat{\myvec{x}}) \, \vert \det \nabla \myvec{F}_T(\hat{\myvec{x}})\vert \mathrm{\, d}\hat{\myvec{x}} \\
 &= \sum_{l=1}^{n_T} \hat{\omega}_{l,T} \,  \hat{\phi}_{k,T}(\hat{\myvec{x}}_{l,T}) \, \vert \det \nabla \myvec{F}_T(\hat{\myvec{x}}_{l,T})\vert \\
 &= \hat{\omega}_{k,T} \, \hat{\phi}_{k,T}(\hat{\myvec{x}}_{k,T}) \, \vert \det \nabla \myvec{F}_T(\hat{\myvec{x}}_{k,T})\vert
\end{align*}
for $1\leq k\leq n_T$. Thus,
\begin{align} \label{eq:psi_ha_as_quadrature}
 \psi_{hp}(\mymatrix{q}_{hp}) 
 = \mathcal{Q}_{hp}\big( \sigma_y \, \vert \mymatrix{q}_{hp} \vert_F \big),
\end{align}
where the mesh dependent quadrature rule $\mathcal{Q}_{hp}(\cdot)$ is given by $\mathcal{Q}_{hp}(\cdot) := \sum_{T\in\mathcal{T}_h} \mathcal{Q}_{hp, T}(\cdot)$
with the local quantities
\begin{align*}
 \mathcal{Q}_{hp, T}(f) 
 := \begin{cases}
 |T| \, f\big(\myvec{F}_T(\myvec{0})\big), & \text{if } p_T = 1, \\
 \sum_{k=1}^{n_T} \hat{\omega}_{k,T} \, \vert \det \nabla \myvec{F}_T(\hat{\myvec{x}}_{k,T})\vert \,  f\big(\myvec{F}_T(\hat{\myvec{x}}_{k,T})\big), & \text{if } p_T\geq 2,
 \end{cases}
 \qquad T\in\mathcal{T}_h.
\end{align*}
Note that on elements $T\in\mathcal{T}_h$ with $p_T \geq 2$ the expression $\mathcal{Q}_{hp}(\cdot)$ represents the standard Gauss quadrature on the reference element while for those elements with $p_T=1$ it is the midpoint rule on the element $T\in\mathcal{T}_h$ since the Gauss quadrature on the reference element with only one point is not exact even for constant $f$ in the case that $|\det \nabla\myvec{F}_T|$ is a polynomial of degree $\geq 2$.

\begin{theorem}\label{BBS_prop:special_cases}
There holds
\begin{align}\label{BBS_eq:characterization_Lambda_hp}
 \Lambda_{hp} 
 = \big\lbrace \mymatrix{\mu}_{hp}\in Q_{hp} \; ; \; (\mymatrix{\mu}_{hp},\mymatrix{q}_{hp})_{0,\Omega} \leq \psi_{hp}(\mymatrix{q}_{hp}) \text{ for all } \mymatrix{q}_{hp}\in Q_{hp}  \big\rbrace.
\end{align}
\end{theorem}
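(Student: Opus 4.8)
The plan is to prove the two set inclusions separately, mirroring the continuous characterization \eqref{BBS_eq:characterization_Lambda} but with one crucial extra ingredient: the $L^2$-inner product of two functions in $Q_{hp}$ is reproduced \emph{exactly} by the mesh-dependent quadrature $\mathcal{Q}_{hp}(\cdot)$. First I would establish this exactness as a preliminary identity, namely $(\mymatrix{\mu}_{hp},\mymatrix{q}_{hp})_{0,\Omega}=\mathcal{Q}_{hp}(\mymatrix{\mu}_{hp}:\mymatrix{q}_{hp})$ for all $\mymatrix{\mu}_{hp},\mymatrix{q}_{hp}\in Q_{hp}$. On an element with $p_T\geq 2$ the pulled-back integrand $(\mymatrix{\mu}_{hp}:\mymatrix{q}_{hp})\circ\myvec{F}_T\,\vert\det\nabla\myvec{F}_T\vert$ is, by the assumption \eqref{BBS_eq:condition_detF} that keeps $\det\nabla\myvec{F}_T$ affine and of one sign, a polynomial of degree at most $2p_T-1$ in each coordinate and is therefore integrated exactly by the $p_T$-point tensor product Gauss rule; on an element with $p_T=1$ the product $\mymatrix{\mu}_{hp}:\mymatrix{q}_{hp}$ is constant, so the midpoint rule defining $\mathcal{Q}_{hp,T}$ is exact as well.

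For the inclusion $\Lambda_{hp}\subseteq\{\ldots\}$, I would take $\mymatrix{\mu}_{hp}\in\Lambda_{hp}$ and an arbitrary $\mymatrix{q}_{hp}\in Q_{hp}$ and combine the exactness identity with the pointwise Cauchy--Schwarz inequality at each Gauss point, $(\mymatrix{\mu}_{hp}:\mymatrix{q}_{hp})(\myvec{F}_T(\hat{\myvec{x}}_{k,T}))\leq \vert\mymatrix{\mu}_{hp}(\myvec{F}_T(\hat{\myvec{x}}_{k,T}))\vert_F\,\vert\mymatrix{q}_{hp}(\myvec{F}_T(\hat{\myvec{x}}_{k,T}))\vert_F\leq \sigma_y\,\vert\mymatrix{q}_{hp}(\myvec{F}_T(\hat{\myvec{x}}_{k,T}))\vert_F$, where the last step is precisely the defining constraint of $\Lambda_{hp}$. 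Since the effective quadrature weights $\hat{\omega}_{k,T}\vert\det\nabla\myvec{F}_T(\hat{\myvec{x}}_{k,T})\vert$ (respectively $\vert T\vert$) are nonnegative, summing the estimate over all points preserves the inequality and gives $(\mymatrix{\mu}_{hp},\mymatrix{q}_{hp})_{0,\Omega}\leq \mathcal{Q}_{hp}(\sigma_y\vert\mymatrix{q}_{hp}\vert_F)=\psi_{hp}(\mymatrix{q}_{hp})$ by \eqref{eq:psi_ha_as_quadrature}.

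For the reverse inclusion I would argue by localization at a single Gauss point, in analogy with the continuous proof that produced the test function $\overline{\mymatrix{q}}$. Given $\mymatrix{\mu}_{hp}$ in the right-hand set, fix $T\in\mathcal{T}_h$ and an index $k$, and build $\mymatrix{q}_{hp}\in Q_{hp}$ from the Lagrange basis $\{\hat{\phi}_{k,T}\}$ whose value at $\myvec{F}_T(\hat{\myvec{x}}_{k,T})$ equals a prescribed $\mymatrix{v}\in\myspace{S}_{d,0}$ and which vanishes at every other Gauss point. This is admissible because $Q_{hp}$ carries no inter-element continuity and the nodal interpolation preserves both symmetry and the vanishing-trace property whenever the prescribed matrix lies in $\myspace{S}_{d,0}$. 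Inserting this $\mymatrix{q}_{hp}$, all but one term in the exactness identity and in $\psi_{hp}$ drop out, and dividing by the positive weight leaves $\mymatrix{\mu}_{hp}(\myvec{F}_T(\hat{\myvec{x}}_{k,T})):\mymatrix{v}\leq \sigma_y\vert\mymatrix{v}\vert_F$ for every $\mymatrix{v}\in\myspace{S}_{d,0}$; the choice $\mymatrix{v}=\mymatrix{\mu}_{hp}(\myvec{F}_T(\hat{\myvec{x}}_{k,T}))$ together with $\mymatrix{M}:\mymatrix{M}=\vert\mymatrix{M}\vert_F^2$ then forces $\vert\mymatrix{\mu}_{hp}(\myvec{F}_T(\hat{\myvec{x}}_{k,T}))\vert_F\leq\sigma_y$. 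As $T$ and $k$ are arbitrary, $\mymatrix{\mu}_{hp}\in\Lambda_{hp}$.

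The step I expect to be the main obstacle is the exactness identity, specifically the degree count under the mapping $\myvec{F}_T$: it is exactly the hypothesis \eqref{BBS_eq:condition_detF} that keeps $\vert\det\nabla\myvec{F}_T\vert$ affine and of one sign, so that its product with $\mymatrix{\mu}_{hp}:\mymatrix{q}_{hp}$ stays within the exactness range of the Gauss rule. Without this the quadrature would only be approximate and the clean equality asserted in the theorem would break down. Once exactness is in place, both inclusions reduce to the pointwise Cauchy--Schwarz inequality and the localization afforded by the Lagrange basis, which are otherwise routine.
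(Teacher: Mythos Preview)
Your proposal is correct and follows essentially the same approach as the paper's proof: both inclusions rest on the exactness of the Gauss quadrature for products in $Q_{hp}$ (guaranteed by \eqref{BBS_eq:condition_detF}), the pointwise Cauchy--Schwarz inequality at the Gauss points for the forward inclusion, and a localized test function built from the Lagrange basis (with $\mymatrix{v}=\mymatrix{\mu}_{hp}(\myvec{F}_T(\hat{\myvec{x}}_{k,T}))$) for the reverse inclusion. The only cosmetic difference is that you state the exactness identity $(\mymatrix{\mu}_{hp},\mymatrix{q}_{hp})_{0,\Omega}=\mathcal{Q}_{hp}(\mymatrix{\mu}_{hp}:\mymatrix{q}_{hp})$ once upfront, whereas the paper re-derives it inside each case.
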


\begin{proof}
Let $\mymatrix{\mu}_{hp} \in \Lambda_{hp}$ and $\mymatrix{q}_{hp} \in Q_{hp}$ be arbitrary. If $p_T=1$, then $\mymatrix{\mu}_{hp \, | \, T}$ and $\mymatrix{q}_{hp \, | \, T}$ are constant on the element $T\in\mathcal{T}_h$ with $\big|\mymatrix{\mu}_{hp \, | \, T} \big|_F \leq \sigma_y$. Thus,
\begin{align*}
 \int_T \mymatrix{\mu}_{hp}:\mymatrix{q}_{hp}\mathrm{\, d}\myvec{x} 
 \leq \int_T |\mymatrix{\mu}_{hp}|_F \, |\mymatrix{q}_{hp}|_F\mathrm{\, d}\myvec{x}  
 \leq \int_T \sigma_y \, |\mymatrix{q}_{hp}|_F\mathrm{\, d}\myvec{x} 
 = \int_T \sigma_y \, \mathcal{J}_{hp}(\vert \mymatrix{q}_{hp}\vert_F)\mathrm{\, d} \myvec{x} 
 = \mathcal{Q}_{hp, T} \big( \sigma_y \, \vert \mymatrix{q}_{hp} \vert_F \big).
\end{align*}
If $p_T \geq 2$ on $T\in\mathcal{T}_h$ we have $|\det \nabla \myvec{F}_T|\in\myspace{P}_1(\widehat{T})$ by \eqref{BBS_eq:condition_detF}. Hence, we exploit the exactness of the Gauss quadrature for polynomials of degree $2p_T - 1$ and the positivity of the corresponding weights to obtain
\begin{align*}
 \int_T \mymatrix{\mu}_{hp}:\mymatrix{q}_{hp}\mathrm{\, d}\myvec{x}
 &=  \int_{\widehat{T}} \mymatrix{\mu}_{hp}\big(\myvec{F}_T(\hat{\myvec{x}})\big) : \mymatrix{q}_{hp}\big(\myvec{F}_T(\hat{\myvec{x}})\big) \, \big|\det \nabla \myvec{F}_T(\hat{\myvec{x}}) \big| \mathrm{\, d}\hat{\myvec{x}} \\
 &=  \sum_{k=1}^{n_T} \hat{\omega}_{k,T} \, \mymatrix{\mu}_{hp}\big(\myvec{F}_T(\hat{\myvec{x}}_{k,T})\big) : \mymatrix{q}_{hp}\big(\myvec{F}_T(\hat{\myvec{x}}_{k,T})\big) \, \big|\det \nabla \myvec{F}_T(\hat{\myvec{x}}_{k,T}) \big| \\
 &\leq  \sum_{k=1}^{n_T} \hat{\omega}_{k,T} \, \big\vert \mymatrix{\mu}_{hp}\big(\myvec{F}_T(\hat{\myvec{x}}_{k,T})\big) \big\vert_F \, \big\vert \mymatrix{q}_{hp}\big(\myvec{F}_T(\hat{\myvec{x}}_{k,T})\big) \big\vert_F \, \big|\det \nabla \myvec{F}_T(\hat{\myvec{x}}_{k,T}) \big| \\
 &\leq   \sum_{k=1}^{n_T} \sigma_y \, \big\vert \mymatrix{q}_{hp}\big(\myvec{F}_T(\hat{\myvec{x}}_{k,T})\big) \big\vert_F \, \hat{\omega}_{k,T} \, \big|\det \nabla \myvec{F}_T(\hat{\myvec{x}}_{k,T}) \big| \\
 &=   \sum_{k=1}^{n_T} \sigma_y \, \big\vert \mymatrix{q}_{hp}\big(\myvec{F}_T(\hat{\myvec{x}}_{k,T})\big) \big\vert_F \sum_{l=1}^{n_T} \hat{\omega}_{l,T} \, \hat{\phi}_{k,T}(\hat{\myvec{x}}_{l,T}) \, \big|\det \nabla \myvec{F}_T(\hat{\myvec{x}}_{k,T}) \big| \\
 &=  \sum_{k=1}^{n_T} \sigma_y \, \big\vert \mymatrix{q}_{hp}\big(\myvec{F}_T(\hat{\myvec{x}}_{k,T})\big) \big\vert_F \int_{\widehat{T}} \hat{\phi}_{k,T}(\hat{\myvec{x}}) \, \big|\det \nabla \myvec{F}_T(\hat{\myvec{x}}) \big| \mathrm{\, d}\hat{\myvec{x}} \\
 &=  \sum_{k=1}^{n_T} \sigma_y \, \big\vert \mymatrix{q}_{hp}\big(\myvec{F}_T(\hat{\myvec{x}}_{k,T})\big) \big\vert_F \int_{T} \hat{\phi}_{k,T}\big(\myvec{F}_T^{-1}(\myvec{x})\big) \mathrm{\, d} \myvec{x} \\
 &=   \int_T \sigma_y  \sum_{k=1}^{n_T} \big\vert \mymatrix{q}_{hp}\big(\myvec{F}_T(\hat{\myvec{x}}_{k,T})\big) \big\vert_F \, \hat{\phi}_{k,T}\big(\myvec{F}_T^{-1}(\myvec{x})\big)  \mathrm{\, d} \myvec{x} \\
 & = \int_T \sigma_y \, \mathcal{J}_{hp}(\vert \mymatrix{q}_{hp}\vert_F)\mathrm{\, d} \myvec{x} \\
 &= \mathcal{Q}_{hp, T} \big( \sigma_y \, \vert \mymatrix{q}_{hp} \vert_F \big).
\end{align*}
Summing up over all elements yields $(\mymatrix{\mu}_{hp},\mymatrix{q}_{hp})_{0,\Omega} \leq \mathcal{Q}_{hp} \big( \sigma_y \, \vert \mymatrix{q}_{hp} \vert_F \big) = \psi_{hp}(\mymatrix{q}_{hp})$.\\

Conversely, if $\mymatrix{\mu}_{hp}\in Q_{hp}$ satisfies $(\mymatrix{\mu}_{hp}, \mymatrix{q}_{hp})_{0,\Omega} \leq \psi_{hp}(\mymatrix{q}_{hp})$ for all $\mymatrix{q}_{hp}\in Q_{hp}$ it is also true for
\begin{align*}
 \widetilde{\mymatrix{q}}_{hp} := \begin{cases}
    \mymatrix{\mu}_{hp}(\myvec{F}_T(\hat{\myvec{x}}_{l,T})) \, \phi_{l,T} , &\text{ on }  T, \\
    \mymatrix{0} ,  &\text{ on } \Omega\setminus T
\end{cases}
\end{align*}
for any $1\leq l\leq n_T$ and $T\in\mathcal{T}_h$. If $p_T = 1$ we have 
\begin{align*}
 |T| \, \big|\mymatrix{\mu}_{hp \, | \, T} \big|_F^2 
 = \int_T |\mymatrix{\mu}_{hp}|_F^2 \mathrm{\, d}\myvec{x} 
 = (\mymatrix{\mu}_{hp}, \widetilde{\mymatrix{q}}_{hp})_{0,\Omega} 
 \leq \mathcal{Q}_{hp, T} \big( \sigma_y \, \vert \mymatrix{\mu}_{hp \, | \, T} \vert_F \big)
 = |T| \, \sigma_y \, \big|\mymatrix{\mu}_{hp \, | \, T} \big|_F ,
\end{align*}
which implies $|\mymatrix{\mu}_{hp \, | \, T}|_F \leq \sigma_y$. If $p_T \geq 2$ on $T\in\mathcal{T}_h$ the assumption $|\det \nabla \myvec{F}_T|\in\myspace{P}_1(\widehat{T})$, the exactness of the Gauss quadrature and the support of $\widetilde{\mymatrix{q}}_{hp}$ yield
\begin{align*}
 \hat{\omega}_{l,T} \, \big\vert \mymatrix{\mu}_{hp}\big(\myvec{F}_T(\hat{\myvec{x}}_{l,T})\big) \big\vert_F^2 \, \big|\det \nabla \myvec{F}_T(\hat{\myvec{x}}_{l,T}) \big| 
 &= \sum_{k=1}^{n_T} \hat{\omega}_{k,T} \, \mymatrix{\mu}_{hp}\big(\myvec{F}_T(\hat{\myvec{x}}_{k,T})\big) : \big(\mymatrix{\mu}_{hp}\big(\myvec{F}_T(\hat{\myvec{x}}_{l,T})\big) \, \hat{\phi}_{l,T}(\hat{\myvec{x}}_{k,T})\big) \, \big|\det \nabla \myvec{F}_T(\hat{\myvec{x}}_{k,T}) \big| \\
 &= (\mymatrix{\mu}_{hp}, \widetilde{\mymatrix{q}}_{hp})_{0,\Omega} \\
 & \leq \mathcal{Q}_{hp} \big( \sigma_y \, \vert \widetilde{\mymatrix{q}}_{hp} \vert_F \big) \\
 &= \sum_{k=1}^{n_T} \hat{\omega}_{k,T} \, \big|\det \nabla \myvec{F}_T(\hat{\myvec{x}}_{k,T}) \big| \, \sigma_y \, \big\vert \mymatrix{\mu}_{hp}\big(\myvec{F}_T(\hat{\myvec{x}}_{l,T})\big) \big\vert_F \, \hat{\phi}_{l,T}(\hat{\myvec{x}}_{k,T}) \\
 &= \sigma_y \, \hat{\omega}_{l,T} \, \big\vert \mymatrix{\mu}_{hp}\big(\myvec{F}_T(\hat{\myvec{x}}_{l,T})\big) \big\vert_F \, \big|\det \nabla \myvec{F}_T(\hat{\myvec{x}}_{l,T}) \big|.
\end{align*}
Since $\hat{\omega}_{l,T} \big|\det \nabla \myvec{F}_T(\hat{\myvec{x}}_{l,T}) \big|>0$ we obtain $\big\vert \mymatrix{\mu}_{hp}\big(\myvec{F}_T(\hat{\myvec{x}}_{l,T})\big) \big\vert_F \leq \sigma_y$ and, thus, $\mymatrix{\mu}_{hp} \in \Lambda_{hp}$.
\end{proof}

In the following we use the symbol $\lesssim$ to hide a constant $c>0$ in the expression $A \leq c \, B$ which is independent of $h$ and $p$. To derive guaranteed convergence rates we apply two projection operators: An $H^1$-projection operator $\mathcal{I}_{hp} : H^1(\Omega,\myspace{R}^d)\rightarrow V_{hp}$ with the approximation property 
\begin{align} \label{BBS_eq:interpolError_Ihp}
 \Vert \myvec{v}-\mathcal{I}_{hp}(\myvec{v})\Vert_{1,\Omega}
 \lesssim \frac{h^{\min(p,s-1)}}{p^{s-1}} \, |\myvec{v}|_{s,\Omega} \qquad 
 \forall \,  \myvec{v} \in H^s(\Omega,\myspace{R}^d) \text{ with } s\geq 1
\end{align}
and the standard $L^2$-projection $\mathcal{P}_{hp}: Q \rightarrow Q_{hp}$ with
\begin{align}\label{BBS_eq:interpolError_Php_01}
 \Vert \mymatrix{q}-\mathcal{P}_{hp}(\mymatrix{q})\Vert_{0,T}
 \lesssim \frac{h_T^{\min(p_T,t)}}{p_T^{t}} \, |\mymatrix{q}|_{t,T} \qquad 
 \forall \,  \mymatrix{q} \in H^t(T,\myspace{R}^{d\times d}) \text{ with } t\geq 0 \quad \forall \, T \in \mathcal{T}_h .
\end{align}
Operators with these approximation properties are proposed in~\cite{AINSWORTH2000329} for bilinear mappings $\mathfrak{F}_T(\cdot)$ and in \cite{Melenk2005, sanchez1984estimations}, respectively. Recall that the local polynomial degree in $Q_{hp}$ is $p_T-1$ in contrast to the polynomial degree $p_T$ in $V_{hp}$. 
Finally, we use a nodal interpolation operator $\mathcal{J}_{hp}:\prod_{T \in \mathcal{T}_h} C^0(T,\myspace{R}^{d\times d})\cap Q \rightarrow Q_{hp}$, which is the componentwise interpolation operator already defined in \eqref{BBS_eq:interpolOperator_Jhp}. By using scaling arguments, we obtain from \cite{bernardi1992polynomial} that
\begin{align} \label{BBS_eq:interpolError_Jhp}
 \Vert \mymatrix{q}-\mathcal{J}_{hp}(\mymatrix{q})\Vert_{0,T}
 \lesssim \frac{h_T^{\min(p_T,t)}}{p_T^{t}} \, |\mymatrix{q}|_{t,T} \qquad 
 \forall \, \mymatrix{q} \in H^t(T,\myspace{R}^{d\times d}) \text{ with } t>d/2 \quad \forall \, T \in \mathcal{T}_h.
\end{align}
We emphasize that by the definition of $\Lambda_{hp}$ we have $\mathcal{J}_{hp}\big( \prod_{T \in \mathcal{T}_h} C^0(T,\myspace{R}^{d\times d})\cap \Lambda \big) \subseteq \Lambda_{hp}$.
With these approximation estimates at hand we can apply density arguments to prove convergence of the discrete solution under minimal regularity conditions.

\begin{theorem}\label{BBS_thm:convergence_mixed_Lambda_hp}
There holds
\begin{align*}
 \lim_{h/p \to 0} \Big( \Vert \myvec{u}-\myvec{u}_{hp}\Vert_{1,\Omega}^2 + \Vert \mymatrix{p}-\mymatrix{p}_{hp}\Vert_{0,\Omega}^2 + \Vert\mymatrix{\lambda}-\mymatrix{\lambda}_{hp}\Vert_{0,\Omega}^2 \Big) 
 = 0.
\end{align*}
\end{theorem}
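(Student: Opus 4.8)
The plan is to start from the a priori estimate of Theorem~\ref{BBS_prop:apriori_estimate_hild} and to show that, for a judicious choice of its free arguments, the right-hand side vanishes as $h/p\to0$. Abbreviating $E_{hp}^2 := \Vert(\myvec{u}-\myvec{u}_{hp},\mymatrix{p}-\mymatrix{p}_{hp})\Vert^2 + \Vert\mymatrix{\lambda}-\mymatrix{\lambda}_{hp}\Vert_{0,\Omega}^2$, which is exactly the quantity in the assertion, the estimate reads $E_{hp}^2 \le c_1 A_{hp} + c_2 C_{hp}$ with the approximation part $A_{hp} := \Vert(\myvec{u}-\myvec{v}_{hp},\mymatrix{p}-\mymatrix{q}_{hp})\Vert^2 + \Vert\mymatrix{\lambda}-\mymatrix{\mu}_{hp}\Vert_{0,\Omega}^2$ and the consistency part $C_{hp} := (\mymatrix{\lambda}_{hp}-\mymatrix{\mu}+\mymatrix{\lambda}-\mymatrix{\mu}_{hp},\mymatrix{p})_{0,\Omega}$. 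Since $E_{hp}^2\ge0$, it suffices to prove $A_{hp}\to0$ and $\limsup C_{hp}\le0$ along suitable choices.

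First I would treat $A_{hp}$ by density, as $\myvec{u}\in V$, $\mymatrix{p}\in Q$ and $\mymatrix{\lambda}\in\Lambda$ have only minimal regularity. Given $\delta>0$ I would pick smooth $\myvec{u}_\delta,\mymatrix{p}_\delta$ that are $\delta$-close to $\myvec{u},\mymatrix{p}$ and a smooth $\mymatrix{\lambda}_\delta\in\Lambda$ that is $\delta$-close to $\mymatrix{\lambda}$; such a $\mymatrix{\lambda}_\delta$ exists because mollification preserves the pointwise convex constraint $\vert\cdot\vert_F\le\sigma_y$ as well as symmetry and the vanishing trace, so $C^\infty(\overline\Omega)\cap\Lambda$ is dense in $\Lambda$. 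Choosing $\myvec{v}_{hp}:=\mathcal{I}_{hp}(\myvec{u}_\delta)$, $\mymatrix{q}_{hp}:=\mathcal{P}_{hp}(\mymatrix{p}_\delta)$ and $\mymatrix{\mu}_{hp}:=\mathcal{J}_{hp}(\mymatrix{\lambda}_\delta)\in\Lambda_{hp}$ (the inclusion being the stated property $\mathcal{J}_{hp}\big(\prod_{T\in\mathcal{T}_h} C^0(T,\myspace{R}^{d\times d})\cap\Lambda\big)\subseteq\Lambda_{hp}$), a triangle inequality splits each factor into the fixed density error (a multiple of $\delta$) plus an interpolation error that vanishes for fixed smooth data by \eqref{BBS_eq:interpolError_Ihp}, \eqref{BBS_eq:interpolError_Php_01} and \eqref{BBS_eq:interpolError_Jhp}. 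Letting $h/p\to0$ and then $\delta\to0$ gives $A_{hp}\to0$, and the same $\mymatrix{\mu}_{hp}$ yields $(\mymatrix{\lambda}-\mymatrix{\mu}_{hp},\mymatrix{p})_{0,\Omega}\to0$ by Cauchy--Schwarz.

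For the consistency part I would set $\mymatrix{\mu}:=\mymatrix{\lambda}\in\Lambda$, which collapses $C_{hp}$ to $(\mymatrix{\lambda}_{hp}-\mymatrix{\mu}_{hp},\mymatrix{p})_{0,\Omega}=(\mymatrix{\lambda}_{hp}-\mymatrix{\lambda},\mymatrix{p})_{0,\Omega}+(\mymatrix{\lambda}-\mymatrix{\mu}_{hp},\mymatrix{p})_{0,\Omega}$, so only $\limsup(\mymatrix{\lambda}_{hp}-\mymatrix{\lambda},\mymatrix{p})_{0,\Omega}\le0$ remains. By the stability bound \eqref{BBS_eq:stabilityIQ_lambda} the family $(\mymatrix{\lambda}_{hp})$ is bounded in $Q$; along a subsequence realizing the $\limsup$ I would extract a further subsequence with $\mymatrix{\lambda}_{hp}\rightharpoonup\mymatrix{\lambda}^\ast$ weakly in $Q$, whence $(\mymatrix{\lambda}_{hp},\mymatrix{p})_{0,\Omega}\to(\mymatrix{\lambda}^\ast,\mymatrix{p})_{0,\Omega}$. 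The decisive claim is $\mymatrix{\lambda}^\ast\in\Lambda$. To see it, fix a smooth $\mymatrix{q}\in Q$ and test the discrete characterization \eqref{BBS_eq:characterization_Lambda_hp} with $\mymatrix{q}_{hp}:=\mathcal{J}_{hp}(\mymatrix{q})$, giving $(\mymatrix{\lambda}_{hp},\mathcal{J}_{hp}(\mymatrix{q}))_{0,\Omega}\le\psi_{hp}(\mathcal{J}_{hp}(\mymatrix{q}))$. Since $\mathcal{J}_{hp}(\mymatrix{q})$ coincides with $\mymatrix{q}$ at all Gauss points, \eqref{eq:psi_ha_as_quadrature} shows $\psi_{hp}(\mathcal{J}_{hp}(\mymatrix{q}))=\mathcal{Q}_{hp}(\sigma_y\,\vert\mymatrix{q}\vert_F)$, a genuine composite Gauss quadrature of the continuous integrand $\sigma_y\vert\mymatrix{q}\vert_F$ which, by positivity of the weights, converges to $\psi(\mymatrix{q})$; together with $\mathcal{J}_{hp}(\mymatrix{q})\to\mymatrix{q}$ in $Q$ by \eqref{BBS_eq:interpolError_Jhp} and the weak convergence of $(\mymatrix{\lambda}_{hp})$ this passes to the limit as $(\mymatrix{\lambda}^\ast,\mymatrix{q})_{0,\Omega}\le\psi(\mymatrix{q})$. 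Density of smooth fields and the Lipschitz continuity of $\psi$ extend this to all $\mymatrix{q}\in Q$, so $\mymatrix{\lambda}^\ast\in\Lambda$ by \eqref{BBS_eq:characterization_Lambda}. Finally \eqref{eq:sup_def_psi} and the complementarity \eqref{eq:complementarityCondition} give $(\mymatrix{\lambda}^\ast,\mymatrix{p})_{0,\Omega}\le\psi(\mymatrix{p})=(\mymatrix{\lambda},\mymatrix{p})_{0,\Omega}$, hence $\limsup(\mymatrix{\lambda}_{hp}-\mymatrix{\lambda},\mymatrix{p})_{0,\Omega}\le0$ and therefore $\limsup E_{hp}^2\le0$, which is the claim.

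The main obstacle is precisely the claim $\mymatrix{\lambda}^\ast\in\Lambda$: the discrete multipliers are \emph{non-conforming}, since $\Lambda_{hp}\not\subseteq\Lambda$ as soon as some $p_T\ge2$, so one must verify that their weak limits nonetheless respect the continuous constraint. The argument hinges on the consistency $\psi_{hp}(\mathcal{J}_{hp}(\mymatrix{q}))\to\psi(\mymatrix{q})$, which, thanks to the observation that $\psi_{hp}\circ\mathcal{J}_{hp}$ reduces to $\mathcal{Q}_{hp}(\sigma_y\vert\mymatrix{q}\vert_F)$ of a fixed continuous function, follows from convergence of composite Gauss rules rather than from interpolating the merely Lipschitz map $\vert\cdot\vert_F$. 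The remaining ingredients (density under the pointwise constraint, weak compactness, and the fact that the delicate $\varepsilon$-absorption has already been performed inside Theorem~\ref{BBS_prop:apriori_estimate_hild}) are routine.
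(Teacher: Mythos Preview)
Your proof is correct. Both yours and the paper's rest on Theorem~\ref{BBS_prop:apriori_estimate_hild} with $\mymatrix{\mu}=\mymatrix{\lambda}$ and the same density choices for the approximation part, and both exploit the key identity $\psi_{hp}(\mathcal{J}_{hp}(\mymatrix{q}))=\mathcal{Q}_{hp}(\sigma_y\,|\mymatrix{q}|_F)$ together with convergence of the composite Gauss quadrature on continuous integrands. The difference lies in how the crucial non-conformity term $(\mymatrix{\lambda}_{hp}-\mymatrix{\lambda},\mymatrix{p})_{0,\Omega}$ is handled. The paper argues \emph{directly}: it smooths $\mymatrix{p}$ to $\mymatrix{p}_\varepsilon\in C^\infty$, tests \eqref{BBS_eq:characterization_Lambda_hp} with the single function $\mathcal{J}_{hp}(\mymatrix{p}_\varepsilon)$, and splits $(\mymatrix{\lambda}_{hp}-\mymatrix{\lambda},\mymatrix{p})_{0,\Omega}$ into pieces each of which is shown to be $\lesssim\varepsilon$ for $h/p$ small, invoking Corollary~\ref{BBS_lem:properties_lagrangeMult} for the $(\mymatrix{\lambda},\mymatrix{p})_{0,\Omega}$ contribution. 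You argue via \emph{weak compactness}: use the stability bound \eqref{BBS_eq:stabilityIQ_lambda} to extract a weak limit $\mymatrix{\lambda}^\ast$ along a subsequence realizing the $\limsup$, test \eqref{BBS_eq:characterization_Lambda_hp} with $\mathcal{J}_{hp}(\mymatrix{q})$ for \emph{all} smooth $\mymatrix{q}$ to conclude $\mymatrix{\lambda}^\ast\in\Lambda$, and only then invoke the complementarity \eqref{eq:complementarityCondition}. The paper's route is slightly more elementary (no subsequence extraction; the interplay of the $\varepsilon$- and $h/p$-limits is explicit), while your route has the pleasant by-product of showing that every weak cluster point of the discrete multipliers is admissible in $\Lambda$, a structural statement of some independent interest.
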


\begin{proof}
Let $\varepsilon>0$ be arbitrary. Since $C^{\infty}(\Omega, \myspace{S}_{d,0})$ is dense in $Q = L^2(\Omega, \myspace{S}_{d,0})$ there exists a $\mymatrix{p}_\varepsilon \in C^{\infty}(\Omega, \myspace{S}_{d,0})$ such that $\|\mymatrix{p}-\mymatrix{p}_\varepsilon\|_{0,\Omega} \leq \varepsilon $. From Theorem~\ref{BBS_prop:special_cases}, \eqref{eq:psi_ha_as_quadrature} and the definition of $\mathcal{Q}_{hp}(\cdot)$ and $\mathcal{J}_{hp}(\cdot)$ we obtain
\begin{align}
 (\mymatrix{\lambda}_{hp},\mathcal{J}_{hp}(\mymatrix{p}_\varepsilon) )_{0,\Omega} 
 \leq  \psi_{hp}(\mathcal{J}_{hp}(\mymatrix{p}_\varepsilon)) 
 = \mathcal{Q}_{hp}(\sigma_y \, |\mathcal{J}_{hp}(\mymatrix{p}_\varepsilon)|_F )
 = \mathcal{Q}_{hp}(\sigma_y \, |\mymatrix{p}_\varepsilon|_F).
\end{align}
By applying Corollary~\ref{BBS_lem:properties_lagrangeMult} and the Cauchy-Schwarz inequality we find that
\begin{align*}
 (\mymatrix{p}, \mymatrix{\lambda}_{hp}-\mymatrix{\lambda})_{0,\Omega} 
 &= (\mymatrix{p}-\mymatrix{p}_\varepsilon, \mymatrix{\lambda}_{hp})_{0,\Omega}  
 + (\mymatrix{p}_\varepsilon - \mathcal{J}_{hp}(\mymatrix{p}_\varepsilon), \mymatrix{\lambda}_{hp})_{0,\Omega} 
 + (\mathcal{J}_{hp}(\mymatrix{p}_\varepsilon), \mymatrix{\lambda}_{hp})_{0,\Omega} 
 - (\mymatrix{p},\mymatrix{\lambda})_{0,\Omega} \\
 &\leq \| \mymatrix{\lambda}_{hp} \|_{0,\Omega} \, \big( \| \mymatrix{p}-\mymatrix{p}_\varepsilon \|_{0,\Omega} + \| \mymatrix{p}_\varepsilon - \mathcal{J}_{hp}(\mymatrix{p}_\varepsilon) \|_{0,\Omega} \big) + \mathcal{Q}_{hp}(\sigma_y \, |\mymatrix{p}_\varepsilon|_F) - \int_\Omega \sigma_y \, |\mymatrix{p}_\varepsilon|_F \mathrm{\, d} \myvec{x} \\
 &\qquad + (\sigma_y, |\mymatrix{p}_\varepsilon|_F-|\mymatrix{p}|_F )_{0,\Omega}.
\end{align*}
From Theorem~\ref{BBS_thm:discrete_mixedF_solvable} and \eqref{BBS_eq:interpolError_Jhp} we obtain for $h/p$ being sufficiently small that
\begin{align*}
 \| \mymatrix{\lambda}_{hp} \|_{0,\Omega} \, \big( \| \mymatrix{p}-\mymatrix{p}_\varepsilon \|_{0,\Omega} + \| \mymatrix{p}_\varepsilon - \mathcal{J}_{hp}(\mymatrix{p}_\varepsilon) \|_{0,\Omega} \big) 
 \lesssim \varepsilon + \frac{h^{\min(p,2)}}{p^2} \, |\mymatrix{p}_\varepsilon|_{2,\Omega} 
 \lesssim \varepsilon.
\end{align*}
Since $\sigma_y \, |\mymatrix{p}_\varepsilon|_F \in C^0(\Omega)$ we deduce from the convergence of the Gauss quadrature (as a consequence of the Weierstrass approximation theorem) that for $h/p$ being sufficiently small there holds
\begin{align*}
 \mathcal{Q}_{hp}(\sigma_y \, |\mymatrix{p}_\varepsilon|_F) - \int_\Omega \sigma_y \, |\mymatrix{p}_\varepsilon|_F \mathrm{\, d} \myvec{x} 
 \lesssim \varepsilon.
\end{align*}
Next, we use the inverse triangle inequality to obtain
\begin{align*}
 (\sigma_y,|\mymatrix{p}_\varepsilon|_F-|\mymatrix{p}|_F )_{0,\Omega} 
 \leq \| \sigma_y \|_{0,\Omega} \, \big\| \, |\mymatrix{p}|_F-|\mymatrix{p}_\varepsilon|_F \big\|_{0,\Omega}
 \leq \| \sigma_y \|_{0,\Omega} \, \big\| \, |\mymatrix{p} - \mymatrix{p}_\varepsilon|_F \big\|_{0,\Omega} 
 = \| \sigma_y \|_{0,\Omega} \, \| \mymatrix{p} - \mymatrix{p}_\varepsilon \|_{0,\Omega} 
 \lesssim \varepsilon.
\end{align*}
Hence, combining these estimates we have
\begin{align*}
 (\mymatrix{p}, \mymatrix{\lambda}_{hp}-\mymatrix{\lambda})_{0,\Omega}
 \lesssim \varepsilon.
\end{align*}

As there exists a $ \myvec{u}_\varepsilon \in C^\infty(\Omega, \myspace{R}^d)$ with $\| \myvec{u} - \myvec{u}_\varepsilon \|_{1,\Omega} \leq \varepsilon$ and a $\mymatrix{\lambda}_\varepsilon \in H^2(\Omega,\myspace{R}^{d \times d}) \cap \Lambda$ with $\|\mymatrix{\lambda} - \mymatrix{\lambda}_\varepsilon\|_{0,\Omega} \leq \varepsilon $, see e.g.~\cite{Banz_2019OptimalControl}, choosing $\myvec{v}_{hp}=\mathcal{I}_{hp}(\myvec{u}_\varepsilon)$, $\mymatrix{q}_{hp}=\mathcal{P}_{hp}(\mymatrix{p}_\varepsilon)$, $\mymatrix{\mu}_{hp}=\mathcal{J}_{hp}(\mymatrix{\lambda}_\varepsilon)$, $\mymatrix{\mu} = \mymatrix{\lambda}$ in Theorem~\ref{BBS_prop:apriori_estimate_hild} and taking $h/p$ sufficient small finally yield
\begin{align*}
   \Vert (\myvec{u}-\myvec{u}_{hp}, \mymatrix{p}-\mymatrix{p}_{hp}) \Vert^2  + \Vert\mymatrix{\lambda}-\mymatrix{\lambda}_{hp}\Vert_{0,\Omega}^2
 & \leq  c_1\left( \Vert \myvec{u}-\myvec{v}_{hp} \Vert_{1,\Omega}^2  + \Vert \mymatrix{p}-\mymatrix{q}_{hp}  \Vert_{0,\Omega}^2 +  \Vert \mymatrix{\lambda}-\mymatrix{\mu}_{hp}\Vert_{0,\Omega}^2 \right)  \\
 & \qquad + c_2 \, (\mymatrix{\lambda}_{hp}-\mymatrix{\lambda}, \mymatrix{p})_{0,\Omega} + c_2 \,  \| \mymatrix{p} \|_{0,\Omega} \, \Vert \mymatrix{\lambda}-\mymatrix{\mu}_{hp}\Vert_{0,\Omega} \\
 & \lesssim \varepsilon,
\end{align*}
which completes the proof.
\end{proof}

As Theorem~\ref{BBS_thm:convergence_mixed_Lambda_hp} only considers norm convergence of the discrete solution there is no need to differ between the elements for which there holds $p_T=1$ and $p_T \geq 2$, respectively. As mentioned earlier (see the beginning of the proof of Theorem~\ref{thm:conver_p1} for the formal arguments) we find that $\Lambda_{hp} \subseteq \Lambda$ and $\mathcal{P}_{hp}(\Lambda) \subseteq \Lambda_{hp}$ if $p_T=1$ for all $T \in \mathcal{T}_h$. Thus, in that case Theorem~\ref{BBS_prop:apriori_estimate_hild}, $(\mymatrix{\lambda}_{hp}-\mymatrix{\lambda},\mymatrix{p})_{0,\Omega} \leq 0$ and similar arguments as in the proof of Theorem~\ref{BBS_thm:convergence_mixed_Lambda_hp} directly yield
\begin{align*}
\Vert (\myvec{u}-\myvec{u}_{hp}, \mymatrix{p}&-\mymatrix{p}_{hp}) \Vert^2 + \Vert\mymatrix{\lambda}-\mymatrix{\lambda}_{hp}\Vert_{0,\Omega}^2 \\
&\lesssim \inf_{\substack{(\myvec{v}_{hp},\mymatrix{q}_{hp},\mymatrix{\mu}_{hp}) \\ \in V_{hp}\times Q_{hp}\times \Lambda_{hp}}} \Vert (\myvec{u}-\myvec{v}_{hp}, \mymatrix{p}-\mymatrix{q}_{hp}) \Vert^2 +  \Vert \mymatrix{\lambda}-\mymatrix{\mu}_{hp}\Vert_{0,\Omega}^2 +  \Vert \mymatrix{\lambda}-\mymatrix{\mu}_{hp}\Vert_{0,\Omega} 
\xrightarrow{h \rightarrow 0} 0.
\end{align*}
As neither the Gauss quadrature nor $\mathcal{J}_{hp}(\cdot)$ is needed, we are not restricted to specific shapes of the elements $T\in\mathcal{T}_h$ in that case.

To obtain guaranteed convergence rates we need to require a certain regularity of the solution. Let us assume $(\myvec{u},\mymatrix{p}) \in H^s(\Omega,\myspace{R}^d) \times H^t(\Omega,\myspace{R}^{d\times d}) $ for some $s \geq 1$ and $t \geq 0$. From Theorem~\ref{BBS_thm:equivalence_continuous_level} we know that $\mymatrix{\lambda} = \operatorname{dev}(\mymatrix{\sigma}(\myvec{u},\mymatrix{p}) - \myspace{H}\mymatrix{p})$ and, thus, provided that $\myspace{C}$ and $\myspace{H}$ are sufficiently regular, we have
\begin{align*}
 |\mymatrix{\lambda}|_{l,\Omega} 
 &\leq |\mymatrix{\sigma}(\myvec{u},\mymatrix{p}) - \myspace{H}\mymatrix{p} |_{l,\Omega} + d^{-1} \big|\operatorname{tr}(\mymatrix{\sigma}(\myvec{u},\mymatrix{p}) - \myspace{H}\mymatrix{p})\mymatrix{I} \big|_{l,\Omega} \\
 &\lesssim |\mymatrix{\sigma}(\myvec{u},\mymatrix{p}) - \myspace{H}\mymatrix{p} |_{l,\Omega} \\
 &\leq |\myspace{C} \mymatrix{\varepsilon}(\myvec{u}) |_{l,\Omega} + |(\myspace{C}+ \myspace{H})\mymatrix{p} |_{l,\Omega} \\
 & \lesssim |\myvec{u} |_{1+l,\Omega} + |\mymatrix{p} |_{l,\Omega},
\end{align*}
which shows that $\mymatrix{\lambda}$ has at least the regularity $H^{\min\{s-1,t\}}(\Omega,\myspace{R}^{d\times d})$. Note that the Sobolev regularity of $\mymatrix{\lambda}$ might be better if the limitation of the global Sobolev regularity of $(\myvec{u},\mymatrix{p})$ does not effect the regularity of $\mymatrix{\lambda}$. This is for instance the case if a geometric singularity such as a re-entrant corner lies in the plastic region. We therefore assume $\mymatrix{\lambda}\in H^l(\Omega,\myspace{R}^{d\times d})$ for some $l\geq 0$.

\begin{theorem}\label{thm:conver_p1}
Let $p_T=1$ for all $T \in \mathcal{T}_h$ and $(\myvec{u},\mymatrix{p},\mymatrix{\lambda}) \in H^s(\Omega,\myspace{R}^d) \times H^t(\Omega,\myspace{R}^{d\times d}) \times H^l(\Omega,\myspace{R}^{d\times d})$ for some $s \geq 1$, $t\geq 0$ and $l \geq 0$. Then there holds
\begin{align*}
   \Vert (\myvec{u}-\myvec{u}_{hp}, \mymatrix{p}-\mymatrix{p}_{hp}) \Vert^2  + \Vert\mymatrix{\lambda}-\mymatrix{\lambda}_{hp}\Vert_{0,\Omega}^2 
   & \lesssim h^{2\min(1,s-1,t,l)} \left( |\myvec{u}|_{s,\Omega}^2 + |\mymatrix{p}|_{t,\Omega}^2 + |\mymatrix{\lambda}|_{l,\Omega}^2 \right).
\end{align*}
\end{theorem}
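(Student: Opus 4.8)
The plan is to derive the estimate from the abstract a priori bound of Theorem~\ref{BBS_prop:apriori_estimate_hild} and to exploit two structural features that are available \emph{only} in the lowest order case $p_T=1$, and which together upgrade the generically reduced rate to the optimal one. First I would record the two set inclusions $\Lambda_{hp}\subseteq\Lambda$ and $\mathcal{P}_{hp}(\Lambda)\subseteq\Lambda_{hp}$. The first holds because for $p_T=1$ the functions in $Q_{hp}$ are piecewise constant (the local degree is $p_T-1=0$), so the pointwise constraint at the single Gauss point of $\widehat{T}$ coincides with the constraint a.e.\ on $T$; hence every $\mymatrix{\mu}_{hp}\in\Lambda_{hp}$ satisfies $|\mymatrix{\mu}_{hp}|_F\leq\sigma_y$ a.e.\ and thus lies in $\Lambda$. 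The second holds because $\mathcal{P}_{hp}(\mymatrix{\mu})_{\,|\,T}$ is the elementwise mean of $\mymatrix{\mu}$, so convexity of the Frobenius norm (Jensen's inequality) gives $|\mathcal{P}_{hp}(\mymatrix{\mu})_{\,|\,T}|_F\leq|T|^{-1}\int_T|\mymatrix{\mu}|_F\,\mathrm{d}\myvec{x}\leq\sigma_y$ whenever $\mymatrix{\mu}\in\Lambda$.

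Next I would apply Theorem~\ref{BBS_prop:apriori_estimate_hild} with $\myvec{v}_{hp}=\mathcal{I}_{hp}(\myvec{u})$, $\mymatrix{q}_{hp}=\mathcal{P}_{hp}(\mymatrix{p})$, $\mymatrix{\mu}_{hp}=\mathcal{P}_{hp}(\mymatrix{\lambda})$ (admissible by the second inclusion) and $\mymatrix{\mu}=\mymatrix{\lambda}\in\Lambda$. With this choice the last term of the abstract estimate collapses to $c_2\,(\mymatrix{\lambda}_{hp}-\mymatrix{\mu}_{hp},\mymatrix{p})_{0,\Omega}$, which I split as $c_2\,(\mymatrix{\lambda}_{hp}-\mymatrix{\lambda},\mymatrix{p})_{0,\Omega}+c_2\,(\mymatrix{\lambda}-\mathcal{P}_{hp}(\mymatrix{\lambda}),\mymatrix{p})_{0,\Omega}$. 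The first summand is nonpositive and may be dropped: since $\mymatrix{\lambda}_{hp}\in\Lambda_{hp}\subseteq\Lambda$, the characterization \eqref{BBS_eq:characterization_Lambda} yields $(\mymatrix{\lambda}_{hp},\mymatrix{p})_{0,\Omega}\leq\psi(\mymatrix{p})$, while the complementarity condition \eqref{eq:complementarityCondition} gives $(\mymatrix{\lambda},\mymatrix{p})_{0,\Omega}=\psi(\mymatrix{p})$, so $(\mymatrix{\lambda}_{hp}-\mymatrix{\lambda},\mymatrix{p})_{0,\Omega}\leq 0$.

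The crucial second summand is where the optimal rate is recovered, by means of the $L^2$-orthogonality of $\mathcal{P}_{hp}$. Since $\mymatrix{\lambda}-\mathcal{P}_{hp}(\mymatrix{\lambda})$ is $L^2$-orthogonal to $Q_{hp}$ and $\mathcal{P}_{hp}(\mymatrix{p})\in Q_{hp}$, I may insert $\mathcal{P}_{hp}(\mymatrix{p})$ and apply Cauchy--Schwarz,
\[
 (\mymatrix{\lambda}-\mathcal{P}_{hp}(\mymatrix{\lambda}),\mymatrix{p})_{0,\Omega}
 = (\mymatrix{\lambda}-\mathcal{P}_{hp}(\mymatrix{\lambda}),\mymatrix{p}-\mathcal{P}_{hp}(\mymatrix{p}))_{0,\Omega}
 \leq \Vert\mymatrix{\lambda}-\mathcal{P}_{hp}(\mymatrix{\lambda})\Vert_{0,\Omega}\,\Vert\mymatrix{p}-\mathcal{P}_{hp}(\mymatrix{p})\Vert_{0,\Omega}.
\]
Thus the consistency term, which in the general (higher-order) estimate enters only linearly in $\Vert\mymatrix{\lambda}-\mymatrix{\mu}_{hp}\Vert_{0,\Omega}$ and therefore costs a factor of $2$ in the rate, is here a \emph{product} of two approximation errors and hence quadratic. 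It then remains to insert the approximation bounds \eqref{BBS_eq:interpolError_Ihp} and \eqref{BBS_eq:interpolError_Php_01} with $p_T=1$, which give $h^{\min(1,s-1)}|\myvec{u}|_{s,\Omega}$, $h^{\min(1,t)}|\mymatrix{p}|_{t,\Omega}$ and $h^{\min(1,l)}|\mymatrix{\lambda}|_{l,\Omega}$; applying Young's inequality to the product term and using $h^a\lesssim h^b$ for $a\geq b$ to absorb every exponent into the smallest one $2\min(1,s-1,t,l)$ yields the claim.

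The main obstacle is conceptual rather than computational: the key is to realize that the inclusion $\Lambda_{hp}\subseteq\Lambda$ (which fails once $p_T\geq 2$) makes the sign condition $(\mymatrix{\lambda}_{hp}-\mymatrix{\lambda},\mymatrix{p})_{0,\Omega}\leq 0$ available, and that combining this with the Galerkin orthogonality of $\mathcal{P}_{hp}$ converts the otherwise linear consistency term into a quadratic one. Once these two observations are in place, the remaining steps are a routine combination of the abstract estimate with the standard polynomial approximation estimates.
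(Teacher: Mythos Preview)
Your proposal is correct and follows essentially the same route as the paper: both proofs exploit the two inclusions $\Lambda_{hp}\subseteq\Lambda$ and $\mathcal{P}_{hp}(\Lambda)\subseteq\Lambda_{hp}$ available when $p_T=1$, and both convert the consistency term into the product $\Vert\mymatrix{\lambda}-\mathcal{P}_{hp}(\mymatrix{\lambda})\Vert_{0,\Omega}\,\Vert\mymatrix{p}-\mathcal{P}_{hp}(\mymatrix{p})\Vert_{0,\Omega}$ via the $L^2$-orthogonality of $\mathcal{P}_{hp}$. The only cosmetic difference is that the paper chooses $\mymatrix{\mu}=\mymatrix{\lambda}_{hp}\in\Lambda$ in Theorem~\ref{BBS_prop:apriori_estimate_hild}, which kills the term $(\mymatrix{\lambda}_{hp}-\mymatrix{\mu},\mymatrix{p})_{0,\Omega}$ outright, whereas you choose $\mymatrix{\mu}=\mymatrix{\lambda}$ and then argue $(\mymatrix{\lambda}_{hp}-\mymatrix{\lambda},\mymatrix{p})_{0,\Omega}\leq 0$ separately; these are equivalent uses of the same inclusion.
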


\begin{proof}
Let $\myvec{v}_{hp} = \mathcal{I}_{hp}(\myvec{u})$, $\mymatrix{q}_{hp} = \mathcal{P}_{hp}(\mymatrix{p}) $ and $\mymatrix{\mu}_{hp} = \mathcal{P}_{hp}(\mymatrix{\lambda})$. Since $p_T=1$ we find that
\begin{align*}
 \mymatrix{\mu}_{hp \, | \, T} 
 = \frac{1}{|T|} \int_T \mymatrix{\lambda} \mathrm{\, d} \myvec{x},
\end{align*}
which implies
\begin{align*}
\big| \mymatrix{\mu}_{hp \, | \, T} \big|_F 
 \leq \frac{1}{|T|} \int_T |\mymatrix{\lambda}|_F \mathrm{\, d} \myvec{x} 
 \leq \sigma_y.
\end{align*}
Therefore, it holds $\mymatrix{\mu}_{hp} \in \Lambda_{hp}$. As $\mymatrix{\lambda}_{hp} \in \Lambda_{hp}$ is piecewise constant we have that $|\mymatrix{\lambda}_{hp}|_F \leq \sigma_y$ everywhere and, thus, $\mymatrix{\lambda}_{hp} \in \Lambda$. Hence, choosing $ \mymatrix{\mu}=\mymatrix{\lambda}_{hp} $ and Theorem~\ref{BBS_prop:apriori_estimate_hild} together with \eqref{BBS_eq:interpolError_Ihp} and \eqref{BBS_eq:interpolError_Php_01} imply that
\begin{align*}
   \Vert (\myvec{u}-\myvec{u}_{hp}, \mymatrix{p}-\mymatrix{p}_{hp}) \Vert^2  + \Vert\mymatrix{\lambda}-\mymatrix{\lambda}_{hp}\Vert_{0,\Omega}^2
 & \leq  c_1 \Big( \Vert \myvec{u}-\mathcal{I}_{hp}(\myvec{u}) \Vert^2_{1,\Omega} + \Vert \mymatrix{p}-\mathcal{P}_{hp}(\mymatrix{p}) \Vert^2_{0,\Omega} +  \Vert \mymatrix{\lambda}-\mathcal{P}_{hp}(\mymatrix{\lambda})\Vert_{0,\Omega}^2 \Big)  \\
 & \qquad + c_2 \, \big(\mymatrix{\lambda}-\mathcal{P}_{hp}(\mymatrix{\lambda}), \mymatrix{p} \big)_{0,\Omega} \\
 & =  c_1 \Big( \Vert \myvec{u}-\mathcal{I}_{hp}(\myvec{u}) \Vert^2_{1,\Omega} + \Vert \mymatrix{p}-\mathcal{P}_{hp}(\mymatrix{p}) \Vert^2_{0,\Omega} +  \Vert \mymatrix{\lambda}-\mathcal{P}_{hp}(\mymatrix{\lambda})\Vert_{0,\Omega}^2 \Big)_{0,\Omega} \\
 & \qquad + c_2 \, \big( \mymatrix{\lambda}-\mathcal{P}_{hp}(\mymatrix{\lambda}), \mymatrix{p}-\mathcal{P}_{hp}(\mymatrix{p}) \big) \\
 & \leq  c_1 \, \Vert \myvec{u}-\mathcal{I}_{hp}(\myvec{u}) \Vert^2_{1,\Omega} + \frac{2c_1+c_2}{2} \Big( \Vert \mymatrix{p}-\mathcal{P}_{hp}(\mymatrix{p}) \Vert^2_{0,\Omega} +  \Vert \mymatrix{\lambda}-\mathcal{P}_{hp}(\mymatrix{\lambda})\Vert_{0,\Omega}^2 \Big) \\
 & \lesssim h^{2\min(1,s-1,t,l)} \big( |\myvec{u}|_{s,\Omega}^2 + |\mymatrix{p}|_{t,\Omega}^2 + |\mymatrix{\lambda}|_{l,\Omega}^2 \big),
\end{align*}
which completes the argument.
\end{proof}

It is not surprising that we obtain optimal order of convergence for the lowest order $h$-version as in that case the discretization is conforming; in particular, we have $\Lambda_{hp} \subseteq \Lambda$. We emphasize that we never use the Gauss quadrature in the above proof and, thus, avoid the assumption \eqref{BBS_eq:condition_detF}. Therefore, in this case there is no restriction for the shapes of elements $T\in\mathcal{T}_h$. Applying a higher-order method we need to estimate the non-conformity error $(\mymatrix{p},\mymatrix{\lambda}_{hp}-\mymatrix{\mu})_{0,\Omega}$ for $\mymatrix{\mu} \in \Lambda$. Since $\mathcal{P}_{hp}( \Lambda) \not\subset \Lambda_{hp}$ in the presents of elements $T\in\mathcal{T}_h$ with $p_T\geq 2$ we have to make use of the operator $\mathcal{J}_{hp}(\cdot)$ and therefore may not achieve optimal convergence rates.

\begin{lemma}\label{BBS_lem:consistError_02}
If $\mymatrix{p}_{ \, | \, T}\in H^t(T,\myspace{R}^{d\times d})$ and $\mymatrix{\lambda}_{ \, | \, T}\in H^l(T,\myspace{R}^{d\times d})$ for some $t$, $l>d/2$ there holds
\begin{align*}
 \int_T \mymatrix{p}: \big( \mymatrix{\lambda}_{hp}-\mymatrix{\lambda} \big) \mathrm{\, d} \myvec{x}  
 \lesssim \frac{h_T^{\min(p_T,l)}}{p_T^l} \, \Vert \mymatrix{p}\Vert_{0,T} \, |\mymatrix{\lambda}|_{l,T} + \frac{h_T^{\min(p_T,t)}}{p_T^t} \, \big(\Vert \mymatrix{\lambda}_{hp} \Vert_{0,T} + \Vert \mymatrix{\lambda}\Vert_{l,T}  \big) \, \vert \mymatrix{p}\vert_{t,T}
\end{align*}
for all $T\in\mathcal{T}_h$ with $p_T \geq 2$.
\end{lemma}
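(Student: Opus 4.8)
The plan is an element-wise argument on a fixed $T\in\mathcal{T}_h$ with $p_T\geq 2$, resting on a double interpolation of both arguments by the nodal (Gauss-point) operator $\mathcal{J}_{hp}(\cdot)$. First I would note that, since $t,l>d/2$, the Sobolev embedding makes $\mymatrix{p}_{\,|\,T}$ and $\mymatrix{\lambda}_{\,|\,T}$ continuous on $T$, so both nodal interpolants are well defined and the complementarity identity $\mymatrix{p}:\mymatrix{\lambda}=\sigma_y\,\vert\mymatrix{p}\vert_F$ of Corollary~\ref{BBS_lem:properties_lagrangeMult}, which a priori only holds a.e., holds in fact pointwise, in particular at the Gauss points. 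With this I would split
\begin{align*}
 \int_T \mymatrix{p}:(\mymatrix{\lambda}_{hp}-\mymatrix{\lambda}) \mathrm{\, d}\myvec{x}
 &= \underbrace{\int_T \mymatrix{p}:\big(\mathcal{J}_{hp}(\mymatrix{\lambda})-\mymatrix{\lambda}\big) \mathrm{\, d}\myvec{x}}_{=:(I)}
 + \underbrace{\int_T \big(\mymatrix{p}-\mathcal{J}_{hp}(\mymatrix{p})\big):\big(\mymatrix{\lambda}_{hp}-\mathcal{J}_{hp}(\mymatrix{\lambda})\big) \mathrm{\, d}\myvec{x}}_{=:(II)} \\
 &\qquad + \underbrace{\int_T \mathcal{J}_{hp}(\mymatrix{p}):\big(\mymatrix{\lambda}_{hp}-\mathcal{J}_{hp}(\mymatrix{\lambda})\big) \mathrm{\, d}\myvec{x}}_{=:(III)}.
\end{align*}

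The two interpolation terms are routine and reproduce exactly the two terms of the claimed bound. For $(I)$ the Cauchy--Schwarz inequality and the approximation estimate \eqref{BBS_eq:interpolError_Jhp} give $(I)\leq\Vert\mymatrix{p}\Vert_{0,T}\,\Vert\mymatrix{\lambda}-\mathcal{J}_{hp}(\mymatrix{\lambda})\Vert_{0,T}\lesssim\frac{h_T^{\min(p_T,l)}}{p_T^l}\,\Vert\mymatrix{p}\Vert_{0,T}\,\vert\mymatrix{\lambda}\vert_{l,T}$. For $(II)$, Cauchy--Schwarz and \eqref{BBS_eq:interpolError_Jhp} yield the factor $\frac{h_T^{\min(p_T,t)}}{p_T^t}\,\vert\mymatrix{p}\vert_{t,T}$, while the remaining factor is controlled by the triangle inequality and \eqref{BBS_eq:interpolError_Jhp} via $\Vert\mymatrix{\lambda}_{hp}-\mathcal{J}_{hp}(\mymatrix{\lambda})\Vert_{0,T}\leq\Vert\mymatrix{\lambda}_{hp}\Vert_{0,T}+\Vert\mathcal{J}_{hp}(\mymatrix{\lambda})\Vert_{0,T}\lesssim\Vert\mymatrix{\lambda}_{hp}\Vert_{0,T}+\Vert\mymatrix{\lambda}\Vert_{l,T}$, giving the second term of the bound.

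The crux, and the step I expect to be the main obstacle, is to show that the consistency term $(III)$ is nonpositive and may simply be discarded. The key observation is that, pulled back to $\widehat{T}$, both $\mathcal{J}_{hp}(\mymatrix{p})$ and $\mymatrix{\lambda}_{hp}-\mathcal{J}_{hp}(\mymatrix{\lambda})$ lie in $\myspace{P}_{p_T-1}(\widehat{T})^{d\times d}$, so their Frobenius product has degree $2p_T-2$; multiplied by $\vert\det\nabla\myvec{F}_T\vert\in\myspace{P}_1(\widehat{T})$ (assumption \eqref{BBS_eq:condition_detF}) the reference integrand has degree $2p_T-1$ and is integrated exactly by the $p_T$-point tensor Gauss rule. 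Writing $\myvec{x}_k:=\myvec{F}_T(\hat{\myvec{x}}_{k,T})$ and using the nodal property $\mathcal{J}_{hp}(\mymatrix{p})(\myvec{x}_k)=\mymatrix{p}(\myvec{x}_k)$, $\mathcal{J}_{hp}(\mymatrix{\lambda})(\myvec{x}_k)=\mymatrix{\lambda}(\myvec{x}_k)$, each quadrature summand equals $\mymatrix{p}(\myvec{x}_k):\mymatrix{\lambda}_{hp}(\myvec{x}_k)-\mymatrix{p}(\myvec{x}_k):\mymatrix{\lambda}(\myvec{x}_k)$. The constraint $\vert\mymatrix{\lambda}_{hp}(\myvec{x}_k)\vert_F\leq\sigma_y$ built into $\Lambda_{hp}$ bounds the first product by $\sigma_y\,\vert\mymatrix{p}(\myvec{x}_k)\vert_F$ via Cauchy--Schwarz, while the pointwise complementarity identity makes the second product equal to $\sigma_y\,\vert\mymatrix{p}(\myvec{x}_k)\vert_F$; hence every summand is $\leq 0$, and since $\hat{\omega}_{k,T}\,\vert\det\nabla\myvec{F}_T(\hat{\myvec{x}}_{k,T})\vert>0$ we conclude $(III)\leq 0$. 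Adding the three estimates then yields the assertion.
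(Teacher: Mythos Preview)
Your proof is correct and follows essentially the same approach as the paper: the identical three-term decomposition, Cauchy--Schwarz combined with the nodal interpolation estimate \eqref{BBS_eq:interpolError_Jhp} for $(I)$ and $(II)$, and the key observation that $(III)\leq 0$ via Gauss-quadrature exactness and the pointwise complementarity $\mymatrix{p}:\mymatrix{\lambda}=\sigma_y\,\vert\mymatrix{p}\vert_F$. The only cosmetic difference is that the paper routes the bound on $\int_T \mymatrix{\lambda}_{hp}:\mathcal{J}_{hp}(\mymatrix{p})\mathrm{\, d}\myvec{x}$ through the integral characterization of $\Lambda_{hp}$ (Theorem~\ref{BBS_prop:special_cases}) before expanding with quadrature, whereas you invoke the pointwise constraint $\vert\mymatrix{\lambda}_{hp}(\myvec{x}_k)\vert_F\leq\sigma_y$ directly; both arrive at the same nodal cancellation.
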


\begin{proof}
It holds
\begin{align}\label{BBS_eq:estimate_proofLem11}
 \int_T \mymatrix{p} : \big(\mymatrix{\lambda}_{hp} -\mymatrix{\lambda}\big) \mathrm{\, d} \myvec{x} 
 &= \int_T \mymatrix{p}: \big(\mathcal{J}_{hp}(\mymatrix{\lambda})-\mymatrix{\lambda} \big) + \big( \mymatrix{p}-\mathcal{J}_{hp}(\mymatrix{p}) \big): \big(\mymatrix{\lambda}_{hp}-\mathcal{J}_{hp}(\mymatrix{\lambda}) \big) + \mathcal{J}_{hp}(\mymatrix{p}) : \big(\mymatrix{\lambda}_{hp}-\mathcal{J}_{hp}(\mymatrix{\lambda}) \big)  \mathrm{\, d} \myvec{x} \nonumber \\
 &\leq \Vert \mymatrix{p}\Vert_{0,T} \, \Vert \mymatrix{\lambda}-\mathcal{J}_{hp}(\mymatrix{\lambda})\Vert_{0,T} + \Vert \mymatrix{p}-\mathcal{J}_{hp}(\mymatrix{p})\Vert_{0,T} \, \Vert \mymatrix{\lambda}_{hp}-\mathcal{J}_{hp}(\mymatrix{\lambda})\Vert_{0,T} \nonumber \\
 & \qquad + \int_T \mathcal{J}_{hp}(\mymatrix{p}):  \big(\mymatrix{\lambda}_{hp}-\mathcal{J}_{hp}(\mymatrix{\lambda}) \big)\mathrm{\, d} \myvec{x}.
\end{align}
Exploiting the representation of $\Lambda_{hp}$ in Theorem~\ref{BBS_prop:special_cases} gives the element wise estimate
\begin{align*}
\int_T \mymatrix{\lambda}_{hp} : \mathcal{J}_{hp}(\mymatrix{p}) \mathrm{\, d} \myvec{x} \leq \int_T \sigma_y  \, \mathcal{J}_{hp}(\vert\mathcal{J}_{hp}(\mymatrix{p})\vert_F) \mathrm{\, d} \myvec{x}.
\end{align*}
Since $\mymatrix{p}_{\, | \, T}\in H^t(T,\myspace{R}^{d\times d})$ and $\mymatrix{\lambda}_{ \, | \, T}\in H^l(T,\myspace{R}^{d\times d})$ for some $t$, $l>d/2$ we have $\mymatrix{p}_{\, | \, T}$, $\mymatrix{\lambda}_{\, | \, T}\in C^0(T,\myspace{R}^{d\times d})$ and, thus, there holds $\mymatrix{p} : \mymatrix{\lambda} = \sigma_y \, \vert\mymatrix{p}\vert_F$ everywhere in $T$ by Corollary~\ref{BBS_lem:properties_lagrangeMult}. 
Furthermore, as $|\det \nabla\myvec{F}_T|\in\myspace{P}_1(\widehat{T})$ the integrand is a polynomial of degree $2p_T-1$ on the reference element. Therefore, the Gauss quadrature is exact and leads to
\begin{align*}
 \int_T \mathcal{J}_{hp}(\mymatrix{p}): \big( \mymatrix{\lambda}_{hp}-\mathcal{J}_{hp}(\mymatrix{\lambda}) \big) \mathrm{\, d} \myvec{x}
 &\leq \int_T \sigma_y \, \mathcal{J}_{hp}(\vert\mathcal{J}_{hp}(\mymatrix{p})\vert_F) - \mathcal{J}_{hp}(\mymatrix{p}) : \mathcal{J}_{hp}(\mymatrix{\lambda}) \mathrm{\, d}\myvec{x} \\
 &= \sum_{k=1}^{n_T} \hat{\omega}_{k,T} \Big( \sigma_y \, \big\vert \mymatrix{p}\left(\myvec{F}_T(\hat{\myvec{x}}_{k,T})\right)\big\vert_F -  \mymatrix{p}\left(\myvec{F}_T(\hat{\myvec{x}}_{k,T})\right) :  \mymatrix{\lambda}\left(\myvec{F}_T(\hat{\myvec{x}}_{k,T})\right) \Big) \vert \det \nabla\myvec{F}_T(\hat{\myvec{x}}_{k,T})\vert \\
 &= 0.
\end{align*}
Thus, by applying \eqref{BBS_eq:interpolError_Jhp} and \eqref{BBS_eq:estimate_proofLem11} we obtain
\begin{align*}
 \int_T \mymatrix{p}: \big( \mymatrix{\lambda}_{hp}-\mymatrix{\lambda} \big) \mathrm{\, d} \myvec{x}
 &\leq \Vert \mymatrix{p}\Vert_{0,T} \, \Vert \mymatrix{\lambda}-\mathcal{J}_{hp}(\mymatrix{\lambda})\Vert_{0,T} + \Vert \mymatrix{p}-\mathcal{J}_{hp}(\mymatrix{p})\Vert_{0,T} \, \big( \Vert \mymatrix{\lambda}_{hp} \Vert_{0,T} + \Vert\mathcal{J}_{hp}(\mymatrix{\lambda})\Vert_{0,T} \big)\\
 & \lesssim \frac{h_T^{\min(p_T,l)}}{p_T^l} \, \Vert \mymatrix{p}\Vert_{0,T} \, |\mymatrix{\lambda}|_{l,T} + \frac{h_T^{\min(p_T,t)}}{p_T^t} \, \big(\Vert \mymatrix{\lambda}_{hp} \Vert_{0,T} + \Vert \mymatrix{\lambda}\Vert_{l,T}  \big) \, \vert \mymatrix{p}\vert_{t,T},
\end{align*}
which finally gives the assertion.
\end{proof}

\begin{theorem}\label{BBS_thm:convergence_rates_mixedForm}
Let $(\myvec{u},\mymatrix{p},\mymatrix{\lambda})\in H^s(\Omega,\myspace{R}^d) \times H^t(\Omega,\myspace{R}^{d\times d}) \times H^l(\Omega,\myspace{R}^{d\times d})$ for some $s\geq 1$ and $t$, $l>d/2$. Then, there holds
\begin{align*}
 \Vert (\myvec{u}-\myvec{u}_{hp}, \mymatrix{p}-\mymatrix{p}_{hp}) \Vert^2  + \Vert\mymatrix{\lambda}-\mymatrix{\lambda}_{hp}\Vert_{0,\Omega}^2 
 \lesssim \frac{h^{\min(p,2s-2,t,l)}}{p^{\min(2s-2,t,l)}}.
\end{align*}
\end{theorem}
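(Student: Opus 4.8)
The plan is to feed the quasi-optimal estimate of Theorem~\ref{BBS_prop:apriori_estimate_hild} with the three interpolants and then control its non-conformity term by the consistency estimate of Lemma~\ref{BBS_lem:consistError_02}. Concretely, I would insert $\myvec{v}_{hp}=\mathcal{I}_{hp}(\myvec{u})$, $\mymatrix{q}_{hp}=\mathcal{P}_{hp}(\mymatrix{p})$, $\mymatrix{\mu}_{hp}=\mathcal{J}_{hp}(\mymatrix{\lambda})$ and the global choice $\mymatrix{\mu}=\mymatrix{\lambda}$ into Theorem~\ref{BBS_prop:apriori_estimate_hild}. Here $\mymatrix{\mu}=\mymatrix{\lambda}\in\Lambda$ is admissible by Theorem~\ref{BBS_thm:equivalence_continuous_level}, and $\mathcal{J}_{hp}(\mymatrix{\lambda})\in\Lambda_{hp}$ is admissible because $l>d/2$ forces $\mymatrix{\lambda}\in C^0(\Omega,\myspace{R}^{d\times d})$ by Sobolev embedding, so that the emphasized inclusion $\mathcal{J}_{hp}\big(\textstyle\prod_{T}C^0(T,\myspace{R}^{d\times d})\cap\Lambda\big)\subseteq\Lambda_{hp}$ applies. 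With $\mymatrix{\mu}=\mymatrix{\lambda}$ the crossed term of Theorem~\ref{BBS_prop:apriori_estimate_hild} collapses to $(\mymatrix{\lambda}_{hp}-\mathcal{J}_{hp}(\mymatrix{\lambda}),\mymatrix{p})_{0,\Omega}$, which I would immediately rewrite as $(\mymatrix{\lambda}_{hp}-\mymatrix{\lambda},\mymatrix{p})_{0,\Omega}+(\mymatrix{\lambda}-\mathcal{J}_{hp}(\mymatrix{\lambda}),\mymatrix{p})_{0,\Omega}$ so that the first summand is exactly the object estimated in Lemma~\ref{BBS_lem:consistError_02}.

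The $c_1$-part of the bound is handled by the approximation estimates directly: \eqref{BBS_eq:interpolError_Ihp}, \eqref{BBS_eq:interpolError_Php_01} and \eqref{BBS_eq:interpolError_Jhp} yield the squared rates $h^{2\min(p,s-1)}p^{-2(s-1)}$, $h^{2\min(p,t)}p^{-2t}$ and $h^{2\min(p,l)}p^{-2l}$ for the three interpolation errors, after passing from the local $h_T,p_T$ to the global $h=\max_T h_T$, $p=\min_T p_T$ (using $h_T\le h\le 1$ and $p_T\ge p$). For the non-conformity term I would split $\mathcal{T}_h$ according to the local degree. On elements with $p_T=1$ the discretisation is locally conforming, $\mymatrix{\lambda}_{hp}|_T\in\Lambda$ with $|\mymatrix{\lambda}_{hp}|_F\le\sigma_y$, so the complementarity $\mymatrix{p}:\mymatrix{\lambda}=\sigma_y|\mymatrix{p}|_F$ (Corollary~\ref{BBS_lem:properties_lagrangeMult}) together with $\mymatrix{p}:\mymatrix{\lambda}_{hp}\le\sigma_y|\mymatrix{p}|_F$ gives $(\mymatrix{\lambda}_{hp}-\mymatrix{\lambda},\mymatrix{p})_{0,T}\le 0$; these elements drop out, exactly as in the proof of Theorem~\ref{thm:conver_p1}. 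On the remaining elements with $p_T\ge 2$ I would sum the element-wise bound of Lemma~\ref{BBS_lem:consistError_02}, apply a discrete Cauchy--Schwarz inequality in the element index, and absorb the uniformly bounded factors $\|\mymatrix{\lambda}_{hp}\|_{0,\Omega}$ (bounded independently of $h,p$ by Theorem~\ref{BBS_thm:discrete_mixedF_solvable}), $\|\mymatrix{\lambda}\|_{l,\Omega}$, $\|\mymatrix{p}\|_{0,\Omega}$ and $|\mymatrix{p}|_{t,\Omega}$ into the hidden constant, giving $\lesssim h^{\min(p,t)}p^{-t}+h^{\min(p,l)}p^{-l}$. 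The leftover term $(\mymatrix{\lambda}-\mathcal{J}_{hp}(\mymatrix{\lambda}),\mymatrix{p})_{0,\Omega}$ is bounded by Cauchy--Schwarz and \eqref{BBS_eq:interpolError_Jhp} by $\lesssim h^{\min(p,l)}p^{-l}$.

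Collecting all contributions gives a bound of the form $h^{2\min(p,s-1)}p^{-2(s-1)}+h^{\min(p,t)}p^{-t}+h^{\min(p,l)}p^{-l}$, and the last step is purely algebraic: writing $r:=\min(2s-2,t,l)$, each summand is dominated by $h^{\min(p,r)}p^{-r}$ since $h\le 1$, $p\ge 1$ force $h^{a}\le h^{\min(p,r)}$ whenever $a\ge\min(p,r)$ and $p^{-b}\le p^{-r}$ whenever $b\ge r$ — and both inequalities are checked term by term for the exponents above. The decisive observation, and the origin of the announced factor-$2$ reduction, is that the consistency contributions enter \emph{linearly} rather than quadratically, so their rates are halved relative to the approximation rates and dictate the guaranteed exponent $\min(p,2s-2,t,l)$ in $h$. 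I expect the genuine obstacle to lie entirely in the bookkeeping of the mixed $h$- and $p$-powers (simultaneously verifying the $h$- and $p$-dominations of all three pieces), since the hard analytic content — that the problematic quantity $\int_T\mathcal{J}_{hp}(\mymatrix{p}):(\mymatrix{\lambda}_{hp}-\mathcal{J}_{hp}(\mymatrix{\lambda}))\,\mathrm{d}\myvec{x}$ has the right sign via the complementarity and the exactness of the Gauss quadrature — has already been discharged in Lemma~\ref{BBS_lem:consistError_02}.
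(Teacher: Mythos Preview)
Your proposal is correct and follows essentially the same route as the paper: insert interpolants into Theorem~\ref{BBS_prop:apriori_estimate_hild}, bound the approximation terms by \eqref{BBS_eq:interpolError_Ihp}--\eqref{BBS_eq:interpolError_Jhp}, drop the $p_T=1$ contributions of $(\mymatrix{\lambda}_{hp}-\mymatrix{\lambda},\mymatrix{p})_{0,T}$ by local conformity and Corollary~\ref{BBS_lem:properties_lagrangeMult}, and invoke Lemma~\ref{BBS_lem:consistError_02} together with the stability bound of Theorem~\ref{BBS_thm:discrete_mixedF_solvable} on the remaining elements. The only difference is cosmetic: the paper defines $\mymatrix{\mu}_{hp}$ and $\mymatrix{\mu}$ elementwise (using $\mathcal{P}_{hp}(\mymatrix{\lambda})$ and $\mymatrix{\lambda}_{hp}$ on $p_T=1$ elements, $\mathcal{J}_{hp}(\mymatrix{\lambda})$ and $\mymatrix{\lambda}$ on $p_T\ge2$ elements), whereas you take the global choices $\mymatrix{\mu}_{hp}=\mathcal{J}_{hp}(\mymatrix{\lambda})$ and $\mymatrix{\mu}=\mymatrix{\lambda}$; since $l>d/2$ makes $\mathcal{J}_{hp}(\mymatrix{\lambda})$ admissible on all elements, your simpler choice works and yields the same final rate.
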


\begin{proof}
Define $\myvec{v}_{hp} := \mathcal{I}_{hp}(\myvec{u})$, $\mymatrix{q}_{hp} := \mathcal{P}_{hp}(\mymatrix{p})$ as well as
\begin{align*}
 \mymatrix{\mu}_{hp} := \begin{cases} 
\mathcal{P}_{hp}(\mymatrix{\lambda})_{\, | \, T} , & \text{if } p_T=1, \\
\mathcal{J}_{hp}(\mymatrix{\lambda})_{\, | \, T}, & \text{otherwise}
\end{cases} \qquad \text{and} \qquad
\mymatrix{\mu} := \begin{cases} 
\mymatrix{\lambda}_{hp \, | \, T} , & \text{if } p_T=1, \\
\mymatrix{\lambda}_{\, | \, T}, & \text{otherwise.}
\end{cases}
\end{align*}
Clearly, there holds $\mymatrix{\mu}_{hp} \in \Lambda_{hp}$ and $\mymatrix{\mu} \in \Lambda$ (see the proof of Theorem~\ref{thm:conver_p1} for the case that $p_T=1$).
Then, we obtain from Theorem~\ref{BBS_prop:apriori_estimate_hild} together with \eqref{BBS_eq:interpolError_Ihp}, \eqref{BBS_eq:interpolError_Php_01} and \eqref{BBS_eq:interpolError_Jhp} that
\begin{align*}
   \Vert (\myvec{u}-\myvec{u}_{hp}, \mymatrix{p}-\mymatrix{p}_{hp}) \Vert^2  + \Vert\mymatrix{\lambda}-\mymatrix{\lambda}_{hp}\Vert_{0,\Omega}^2
 & \leq c \left( \frac{h^{2\min(p,s-1)}}{p^{2(s-1)}} \, |\myvec{u}|_{s,\Omega}^2  + \frac{h^{2\min(p,t)}}{p^{2t}} \, |\mymatrix{p}|_{t,\Omega}^2 + \frac{h^{2\min(p,l)}}{p^{2l}} \, |\mymatrix{\lambda}|_{l,\Omega}^2 \right) \\
 & \qquad + c_2 \, (\mymatrix{\lambda}_{hp}-\mymatrix{\mu} , \mymatrix{p})_{0,\Omega} + c_2 \, ( \mymatrix{\lambda}-\mymatrix{\mu}_{hp}, \mymatrix{p})_{0,\Omega}
\end{align*}
for some constant $c>0$. Moreover, the Cauchy-Schwarz inequality and Young's inequality yield together with \eqref{BBS_eq:interpolError_Php_01} and \eqref{BBS_eq:interpolError_Jhp} that
\begin{align*}
 ( \mymatrix{\lambda}-\mymatrix{\mu}_{hp}, \mymatrix{p})_{0,\Omega} 
 & =  \sum_{\substack{T\in\mathcal{T}_h \\ p_T = 1}} \int_T \big( \mymatrix{\lambda}-\mathcal{P}_{hp}(\mymatrix{\lambda}) \big) : \mymatrix{p} \mathrm{\, d} \myvec{x} + \sum_{\substack{T\in\mathcal{T}_h \\ p_T \geq 2}} \int_T \big( \mymatrix{\lambda}-\mathcal{J}_{hp}(\mymatrix{\lambda}) \big) : \mymatrix{p} \mathrm{\, d} \myvec{x} \\
 &\leq \sum_{\substack{T\in\mathcal{T}_h \\ p_T = 1}} \Vert \mymatrix{p} - \mathcal{P}_{hp}(\mymatrix{p}) \Vert_{0,T} \, \Vert \mymatrix{\lambda}-\mathcal{P}_{hp}(\mymatrix{\lambda})\Vert_{0,T} + \sum_{\substack{T\in\mathcal{T}_h \\ p_T \geq 2}} \Vert \mymatrix{p}\Vert_{0,T} \, \Vert \mymatrix{\lambda}-\mathcal{J}_{hp}(\mymatrix{\lambda})\Vert_{0,T} \\
 &\lesssim \sum_{\substack{T\in\mathcal{T}_h \\ p_T = 1}} h_T^{\min(1,t)+ \min(1,l)} \, |\mymatrix{p}|_{t,T}|\mymatrix{\lambda}|_{l,T} + \sum_{\substack{T\in\mathcal{T}_h \\ p_T \geq 2}} 
\frac{h_T^{\min(p_T,l)}}{p_T^l} \, \Vert \mymatrix{p}\Vert_{0,T} \, |\mymatrix{\lambda}|_{l,T} \\
& \lesssim  h^{2\min(1,t)} \, |\mymatrix{p}|_{t,\Omega}^2 + h^{2\min(1,l)} \, |\mymatrix{\lambda}|_{l,\Omega}^2 +
\frac{h^{\min(p,l)}}{p^{l}} \, |\mymatrix{\lambda}|_{l,\Omega}.
\end{align*}
By using the definition of $\mymatrix{\mu}$ and Lemma~\ref{BBS_lem:consistError_02} we find that
\begin{align*}
 (\mymatrix{\lambda}_{hp}-\mymatrix{\mu} , \mymatrix{p})_{0,\Omega} 
 & =  \sum_{\substack{T\in\mathcal{T}_h \\ p_T \geq 2}} \int_T \big( \mymatrix{\lambda}_{hp}-\mymatrix{\lambda} \big): \mymatrix{p} \mathrm{\, d} \myvec{x} \\
 &\lesssim \sum_{\substack{T\in\mathcal{T}_h \\ p_T \geq 2}}   \frac{h_T^{\min(p_T,l)}}{p_T^l}  \Vert \mymatrix{p}\Vert_{0,T}|\mymatrix{\lambda}|_{l,T} + \frac{h_T^{\min(p_T,t)}}{p_T^t} \, \big(\Vert \mymatrix{\lambda}_{hp} \Vert_{0,T} + \Vert \mymatrix{\lambda}\Vert_{l,T} \big) \, \vert \mymatrix{p}\vert_{t,T}\\
 & \leq \Vert \mymatrix{p}\Vert_{0,\Omega} \, \frac{h^{\min(p,l)}}{p^{l}} \, |\mymatrix{\lambda}|_{l,\Omega} + \big( \Vert  \mymatrix{\lambda}_{hp}\Vert_{0,\Omega} + \Vert \mymatrix{\lambda}\Vert_{l,\Omega} \big) \, \frac{h_T^{\min(p,t)}}{p^{t}} \, |\mymatrix{p}|_{t,\Omega}.
\end{align*}
Combining the last three estimates and Theorem~\ref{BBS_thm:discrete_mixedF_solvable} yield
\begin{align*}
   \Vert (\myvec{u}-\myvec{u}_{hp}, \mymatrix{p}-\mymatrix{p}_{hp}) \Vert^2  + \Vert\mymatrix{\lambda}-\mymatrix{\lambda}_{hp}\Vert_{0,\Omega}^2 
   & \lesssim \frac{h^{2\min(p,s-1)}}{p^{2(s-1)}} \, |\myvec{u}|_{s,\Omega}^2  +  \frac{h^{2\min(p,t)}}{p^{2t}} \, |\mymatrix{p}|_{t,\Omega}^2 + \frac{h^{2\min(p,l)}}{p^{2l}} \, |\mymatrix{\lambda}|_{l,\Omega}^2 \\
   & \qquad + \frac{h^{\min(p_T,l)}}{p^{l}} \, |\mymatrix{\lambda}|_{l,\Omega} + \frac{h^{\min(p,t)}}{p^{t}} \, |\mymatrix{p}|_{t,\Omega} \\
   & \lesssim \frac{h^{2\min(p,s-1,t,l)}}{p^{2\min(s-1,t,l)}} \, \left( |\myvec{u}|_{s,\Omega}^2  +   |\mymatrix{p}|_{t,\Omega}^2 + |\mymatrix{\lambda}|_{l,\Omega}^2 \right) \\
 & \qquad + \frac{h^{\min(p_T,t,l)}}{p^{\min(t,l)}}  \left( |\mymatrix{\lambda}|_{l,\Omega}^2 + |\mymatrix{p}|_{t,\Omega}^2 \right)^{1/2}.
\end{align*}
Finally, eliminating the dominated convergence rate terms completes the proof.
\end{proof}


\begin{figure}[ht]
  \centering 
  \begin{subfigure}[t]{0.3\textwidth}
    \centering
	\includegraphics[trim = 30mm 37mm 0mm 31mm, clip,height=43mm, keepaspectratio]{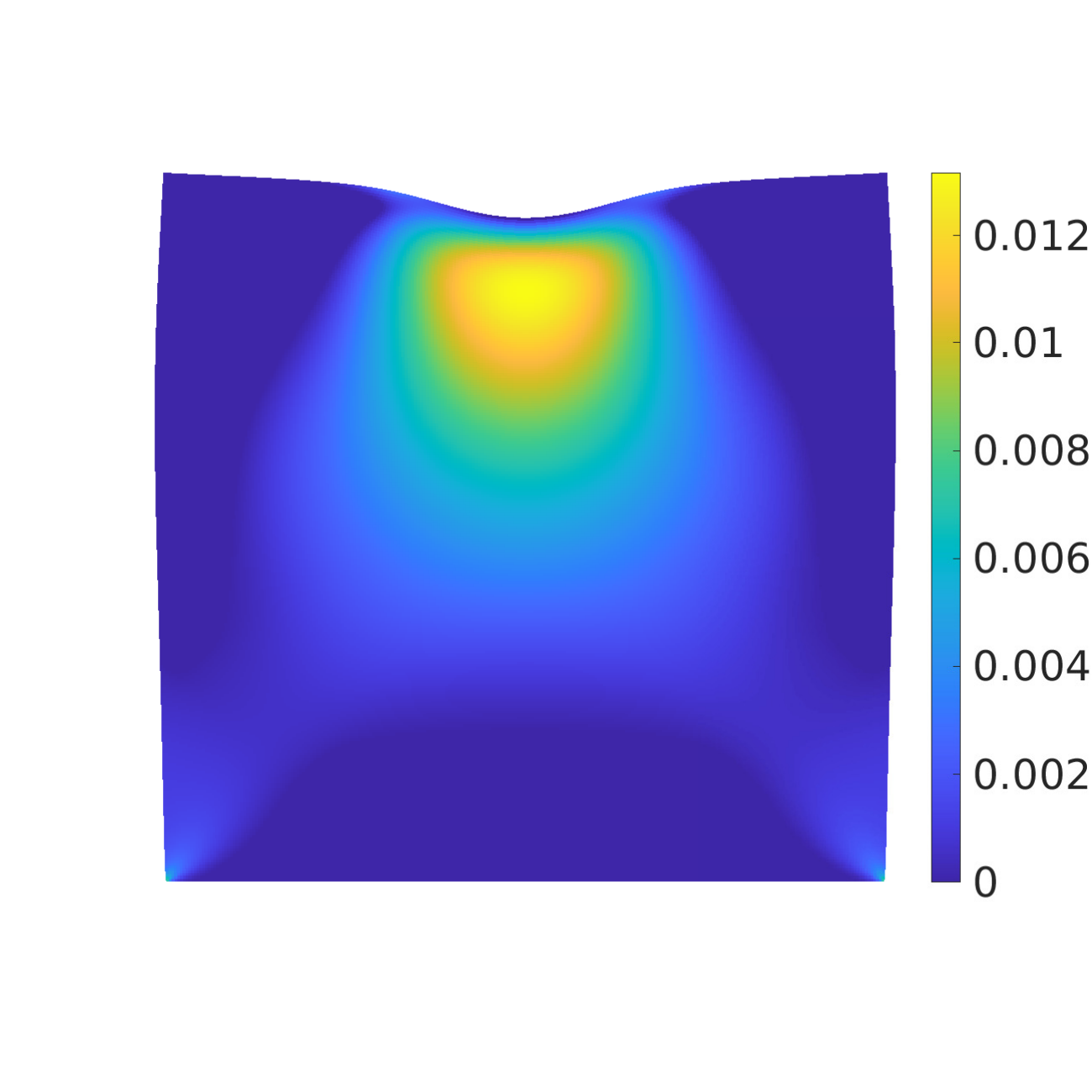}
	\caption{$\vert  \mymatrix{p}_{hp}  \vert_F$}
  \end{subfigure}
  \hspace{0.6cm} 
  \begin{subfigure}[t]{0.3\textwidth}
    \centering
	\includegraphics[trim = 30mm 37mm 15mm 31mm, clip,height=43mm, keepaspectratio]{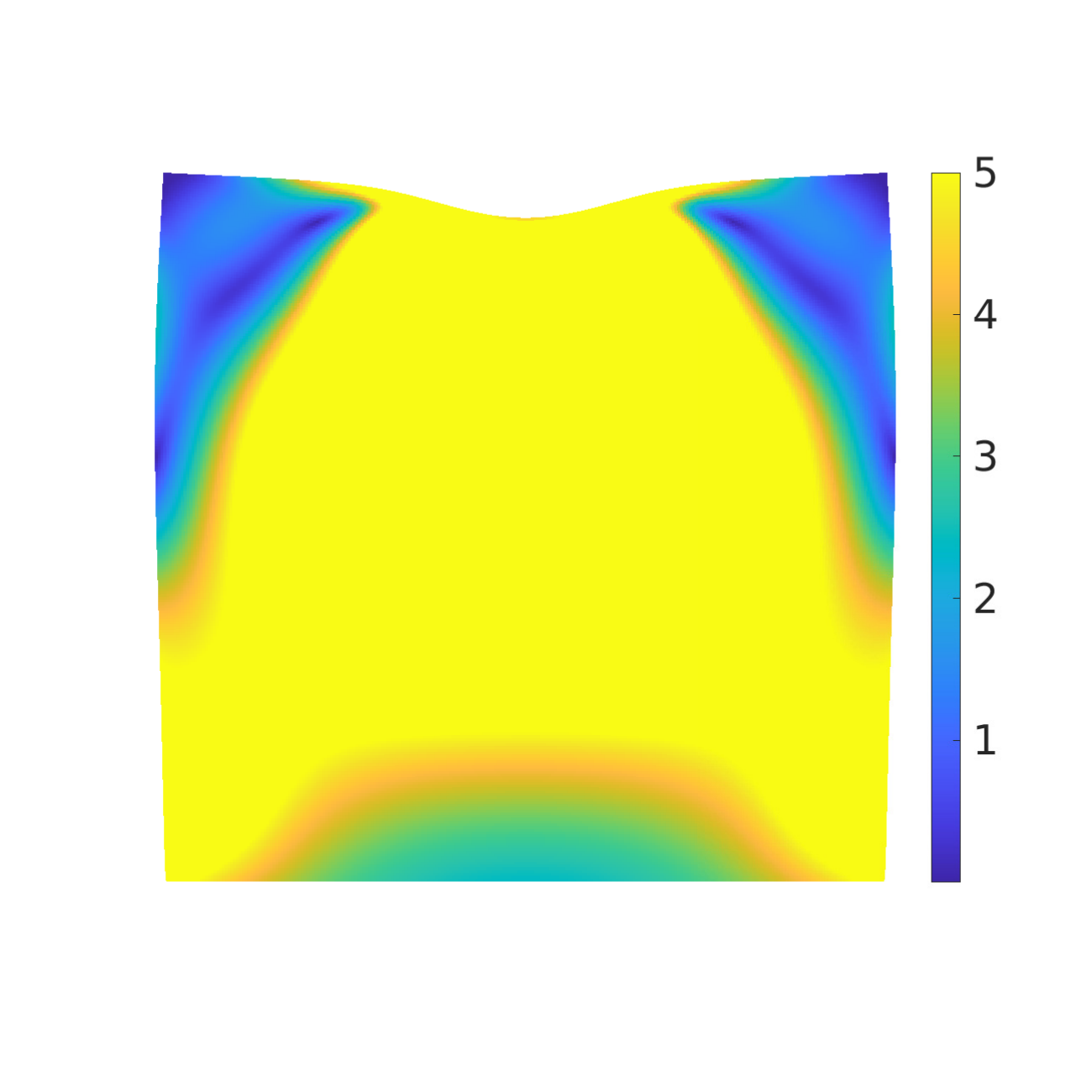}
	\caption{$\vert  \mymatrix{\lambda}_{hp} \vert_F$}
  \end{subfigure}
  \hspace{0.05cm} 
  \begin{subfigure}[t]{0.3\textwidth}
    \centering
	\includegraphics[trim = 45mm 45mm 37mm 38mm, clip,height=43mm, keepaspectratio]{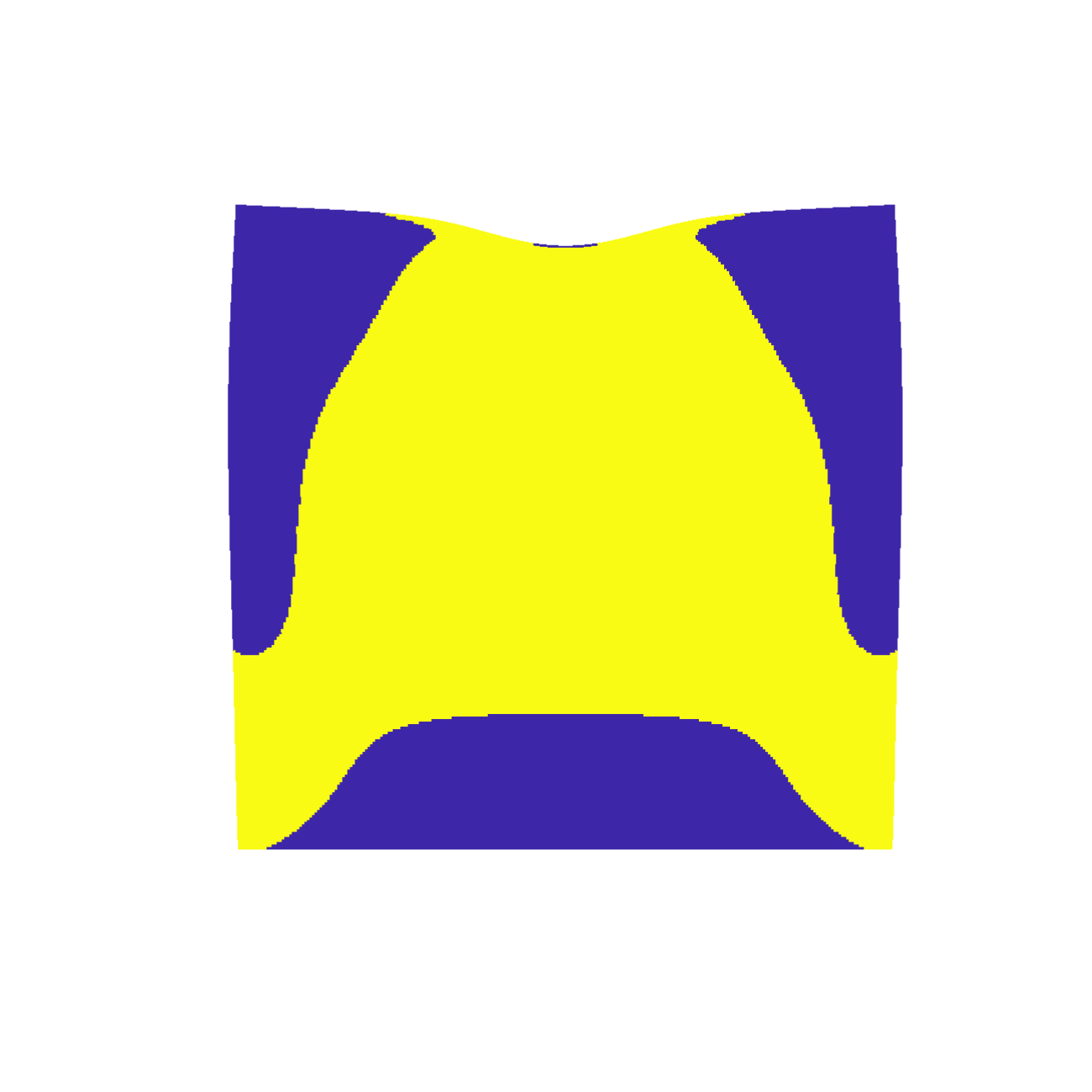}
	\caption{Purly elastic region (blue)}
	 \label{BBS_fig:elastic_region}
  \end{subfigure}
  \caption{\it Deformation of $\Omega$ magnified by factor 10 for uniform mesh with $h=2^{-7}$ and $p=1$.}
  \label{BBS_fig:solution}
\end{figure}




\section{Numerical Results}\label{BBS_sec:numerical_results}

In the following numerical experiments we choose $\Omega := (-1,1)^2$ with Dirichlet boundary $\Gamma_D := [-1,1]\times\lbrace -1 \rbrace$ and apply the volume forces $\myvec{f} := \myvec{0}$ on $\Omega$ and $\myvec{g} := (0,-400 \min(0,x_1^2-1/4)^2)^{\top}$ on $[-1,1]\times \lbrace 1\rbrace$ and zero elsewhere on the Neumann boundary. Moreover, we choose $\myspace{C}\mymatrix{\tau} := \lambda \operatorname{tr}(\mymatrix{\tau})\mymatrix{I}+2\mu\mymatrix{\tau}$ with Lam\'e constants $\lambda:=\mu:=1000$, $\myspace{H}\mymatrix{\tau}:=500\mymatrix{\tau}$ and $\sigma_y:=5$. Unfortunately, an analytic solution to that problem is unknown. Instead, a discrete solution on a very fine mesh is plotted in Figure~\ref{BBS_fig:solution}, where the displacement $\myvec{u}$ is added to the mesh nodes. The deformed body is colored according to the value of either $\vert \mymatrix{p}_{hp} \vert_F$ or $\vert \mymatrix{\lambda}_{hp} \vert_F$. In Figure~\ref{BBS_fig:elastic_region} the region of (nearly) purely elastic deformation with $\vert \mymatrix{p}_{hp}\vert_F \leq 2.22\cdot 10^{-15}$ is colored in blue and its complement in yellow. To quantify the approximation error we use 
\begin{align*}
 e_{\myvec{u}} := \| \myvec{u}_\text{fine} - \myvec{u}_{hp}\|_{1, \Omega}, \quad 
 e_{\mymatrix{p}} := \|\mymatrix{p}_\text{fine}-\mymatrix{p}_{hp} \|_{0, \Omega}, \quad
 e_{\mymatrix{\lambda}} := \| \mymatrix{\lambda}_\text{fine}- \mymatrix{\lambda}_{hp} \|_{0, \Omega},
\end{align*}
where \textit{fine} indicates an overkill discrete solution by halving $h_T$ and increasing $p_T$ by one on all elements of the finest mesh. In the following, we consider the convergence of the discretization schemes in the case of uniform $h$-refinements with $p=1,2,3$ and uniform $p$-refinements with $h=0.4$. We compare these results with those obtained by applying $h$-adaptive schemes with $p=1,2,3$ as well as with an $hp$-adaptive scheme, where the adaptivity is steered by the a posteriori error estimator
\begin{align*}
 \eta^2 
 &:= \sum_{T\in \mathcal{T}_h } \eta_T^2 +\sum_{e\in \mathcal{E}_h^I} \eta_e^2 + \sum_{e\in \mathcal{E}_h^N} \eta_{e,N}^2 + \big\Vert \operatorname{dev}(\mymatrix{\sigma}(\myvec{u}_{hp},\mymatrix{p}_{hp})-\myspace{H} \mymatrix{p}_{hp})- \mymatrix{\lambda}_{hp} \big\Vert_{0,\Omega}^2\\
 &\qquad + \Vert \mymatrix{\lambda}_{hp}-\mymatrix{\mu}^*\Vert_{0,\Omega}^2+(\sigma_y,\vert\mymatrix{p}_{hp}\vert_F)_{0,\Omega} - (\mymatrix{\mu}^*, \mymatrix{p}_{hp})_{0,\Omega}
\end{align*}
with the local contributions
\begin{align*}
 \eta_T^2 := \frac{h_T^2}{p_T^2} \, \big\Vert \myvec{f}+ \operatorname{div} \mymatrix{\sigma}(\myvec{u}_{hp},\mymatrix{p}_{hp})\big\Vert_{0,T}^2, \quad 
 \eta_e^2 := \frac{h_e}{p_e} \, \big\Vert  \llbracket \mymatrix{\sigma}(\myvec{u}_{hp},\mymatrix{p}_{hp}) \, \myvec{n}_e\rrbracket\big\Vert_{0,e}^2 , \quad 
 \eta_{e,N}^2 := \frac{h_e}{p_e} \, \big\Vert \mymatrix{\sigma}(\myvec{u}_{hp},\mymatrix{p}_{hp}) \, \myvec{n}_e-\myvec{g}\big\Vert_{0,e}^2.
\end{align*}
Here, $h_e$, $p_e$, $\mathcal{E}_h^I$, $\mathcal{E}_h^N$, $\llbracket \cdot \rrbracket$ is the edge length, edge polynomial degree, set of edges interior to $\Omega$, set of Neumann edges and the usual jump function, respectively. Furthermore, the cut-off function $\mymatrix{\mu}^*$ is defined element wise by
\begin{align*}
 \mymatrix{\mu}_{\, | \, T}^* 
 := \min\left\lbrace 1, \frac{\sigma_y}{ \; \big\vert \widehat{\mymatrix{\mu}}_{\, | \, T} \big\vert_F} \right\rbrace \, \widehat{\mymatrix{\mu}}_{\, | \, T},
 \quad \text{where} \quad
 \widehat{\mymatrix{\mu}}_{\, | \, T} 
 := \mymatrix{\lambda}_{hp \, | \, T} + \frac{1}{2} \, \mymatrix{p}_{hp \, | \, T} \qquad
 \forall \, T\in\mathcal{T}.
\end{align*}

The marking is done by the Dörfler-marking strategy with parameter $\theta=0.5$ in combination with a local regularity estimate based on the decay rate of the error estimator in $p$, see e.g.~\cite{banz2022priori,banz2020posteriori,banz2018higher}. We refer to \cite{Bammer2023Posteriori} for the introduction and detailed analysis of the above a posteriori error estimator. \\


\begin{figure}
  \centering
  \begin{subfigure}[t]{0.3\textwidth}
    \centering
	\begin{tikzpicture}[scale=0.6] 
		\begin{loglogaxis}[
			width=1.8\textwidth,
			mark size=2.5pt,
			line width=0.75pt,
			xmin=1e1,xmax=1e8,
			ymin=1e-9,ymax=2e-2,
			legend style={at={(0,0)},anchor=south west},
            legend style={fill=none}
			]
   
			\addplot+[mark=o, color=blue] table[x index=0,y index=3] {logos/h1.txt};
            \addplot+[mark=square, color=red] table[x index=0,y index=3] {logos/h2.txt};
            \addplot+[mark=x, color=green!60!black] table[x index=0,y index=3] {logos/h3.txt};
            \addplot+[mark=asterisk, color=darkbrown] table[x index=0,y index=3] {logos/p.txt};

			\addplot+[mark=+, color=purple] table[x index=0,y index=3] {logos/a1.txt};
            \addplot+[mark=o, solid, color=black] table[x index=0,y index=3] {logos/a2.txt};
            \addplot+[mark=10-pointed star, solid, color=teal] table[x index=0,y index=3] {logos/a3.txt};
            \addplot+[mark=diamond, solid, color=orange] table[x index=0,y index=3] {logos/hp.txt};

			\draw (1e6,2e-4) -- (1e7,2e-4);
			\draw (1e7,2e-4) -- (1e7,2e-4*0.464158883361278);
			\draw (1e6,2e-4) -- (1e7,2e-4*0.464158883361278);
			\node at (2.5e7,1.3e-4) {$1/3$};
   
			\draw (1e6,3e-6) -- (1e7,3e-6);
			\draw (1e7,3e-6) -- (1e7,3e-6*0.1);
			\draw (1e6,3e-6) -- (1e7,3e-6*0.1);
			\node at (1.8e7,1.2e-6) {$1$};
			
			\draw (1e5,1e-6*0.031622776601684) -- (1e6,1e-6*0.031622776601684);
 			\draw (1e5,1e-6) -- (1e5,1e-6*0.031622776601684);
 			\draw (1e5,1e-6) -- (1e6,1e-6*0.031622776601684);
			\node at (3.5e4,2e-7) {$1.5$};

   \legend{{$h1$},{$h2$},{$h3$},{$p$},{$a1$},{$a2$},{$a3$},{$hp$}}
   
		\end{loglogaxis}
	  \end{tikzpicture}
      \caption{$e_{\myvec{u}}$}
      \label{BBS_fig:errorU}
    \end{subfigure}    
    \hspace{0.3cm} 
    \begin{subfigure}[t]{0.3\textwidth}
      \centering
	  \begin{tikzpicture}[scale=0.6] 
		\begin{loglogaxis}[
			width=1.8\textwidth,
			mark size=2.5pt,
			line width=0.75pt,
			xmin=1e1,xmax=1e8,
			ymin=1e-9,ymax=2e-2,
			legend style={at={(0,0)},anchor=south west},
            legend style={fill=none}
			]
	
   			\addplot+[mark=o, color=blue] table[x index=0,y index=4] {logos/h1.txt};
            \addplot+[mark=square, color=red] table[x index=0,y index=4] {logos/h2.txt};
            \addplot+[mark=x, color=green!60!black] table[x index=0,y index=4] {logos/h3.txt};
            \addplot+[mark=asterisk, color=darkbrown] table[x index=0,y index=4] {logos/p.txt};

			\addplot+[mark=+, color=purple] table[x index=0,y index=4] {logos/a1.txt};
            \addplot+[mark=o, solid, color=black] table[x index=0,y index=4] {logos/a2.txt};
            \addplot+[mark=10-pointed star, solid, color=teal] table[x index=0,y index=4] {logos/a3.txt};
            \addplot+[mark=diamond, solid, color=orange] table[x index=0,y index=4] {logos/hp.txt};
  
			\draw (1e6,1.2e-4) -- (1e7,1.2e-4);
			\draw (1e7,1.2e-4) -- (1e7,1.2e-4*0.464158883361278);
			\draw (1e6,1.2e-4) -- (1e7,1.2e-4*0.464158883361278);
			\node at (2.5e7,8e-5) {$1/3$};
			
			\draw (1e6,3e-6) -- (1e7,3e-6);
			\draw (1e7,3e-6) -- (1e7,3e-6*0.1);
			\draw (1e6,3e-6) -- (1e7,3e-6*0.1);
			\node at (1.8e7,1.2e-6) {$1$};
			
			\draw (1e5,1e-6*0.031622776601684) -- (1e6,1e-6*0.031622776601684);
 			\draw (1e5,1e-6) -- (1e5,1e-6*0.031622776601684);
 			\draw (1e5,1e-6) -- (1e6,1e-6*0.031622776601684);
			\node at (3.5e4,2e-7) {$1.5$};

            \legend{{$h1$},{$h2$},{$h3$},{$p$},{$a1$},{$a2$},{$a3$},{$hp$}}
   
		\end{loglogaxis}
	   \end{tikzpicture}
       \caption{$e_{\mymatrix{p}}$}
       \label{BBS_fig:errorP}
     \end{subfigure}
    \hspace{0.3cm} 
    \begin{subfigure}[t]{0.3\textwidth}
      \centering
	  \begin{tikzpicture}[scale=0.6] 
		\begin{loglogaxis}[
			width=1.8\textwidth,
			mark size=2.5pt,
			line width=0.75pt,
			xmin=1e1,xmax=1e8,
			ymin=1e-6,ymax=1e1,
			legend style={at={(0,0)},anchor=south west},
            legend style={fill=none}
			]

   			\addplot+[mark=o, color=blue] table[x index=0,y index=5] {logos/h1.txt};
            \addplot+[mark=square, color=red] table[x index=0,y index=5] {logos/h2.txt};
            \addplot+[mark=x, color=green!60!black] table[x index=0,y index=5] {logos/h3.txt};
            \addplot+[mark=asterisk, color=darkbrown] table[x index=0,y index=5] {logos/p.txt};

			\addplot+[mark=+, color=purple] table[x index=0,y index=5] {logos/a1.txt};
            \addplot+[mark=o, solid, color=black] table[x index=0,y index=5] {logos/a2.txt};
            \addplot+[mark=10-pointed star, solid, color=teal] table[x index=0,y index=5] {logos/a3.txt};
            \addplot+[mark=diamond, solid, color=orange] table[x index=0,y index=5] {logos/hp.txt};
  
			\draw (1e6,9e-2) -- (1e7,9e-2);
			\draw (1e7,9e-2) -- (1e7,9e-2*0.316227766016838);
			\draw (1e6,9e-2) -- (1e7,9e-2*0.316227766016838);
			\node at (2.5e7,5.5e-2) {$0.5$};
			
			\draw (1e6,2e-3) -- (1e7,2e-3);
			\draw (1e7,2e-3) -- (1e7,2e-3*0.1);
			\draw (1e6,2e-3) -- (1e7,2e-3*0.1);
			\node at (1.8e7,8e-4) {$1$};
			
			\draw (1e5,2e-3*0.031622776601684) -- (1e6,2e-3*0.031622776601684);
 			\draw (1e5,2e-3) -- (1e5,2e-3*0.031622776601684);
 			\draw (1e5,2e-3) -- (1e6,2e-3*0.031622776601684);
			\node at (3.5e4,3.5e-4) {$1.5$};

            \legend{{$h1$},{$h2$},{$h3$},{$p$},{$a1$},{$a2$},{$a3$},{$hp$}}
      
		\end{loglogaxis}
	  \end{tikzpicture}
      \caption{$e_{\mymatrix{\lambda}}$}
      \label{BBS_fig:errorL}
    \end{subfigure}

   \caption{\it Individual approximation errors vs.~degrees of freedom.
   Legend: $hi$ stands for uniform $h$-refinement with $p=i$, $p$ for uniform $p$-refinement with $h=0.4$, $ai$ for $h$-adaptive scheme with $p=i$ and $hp$ for $hp$-adaptive scheme.}
  \label{BBS_fig:error}
\end{figure}
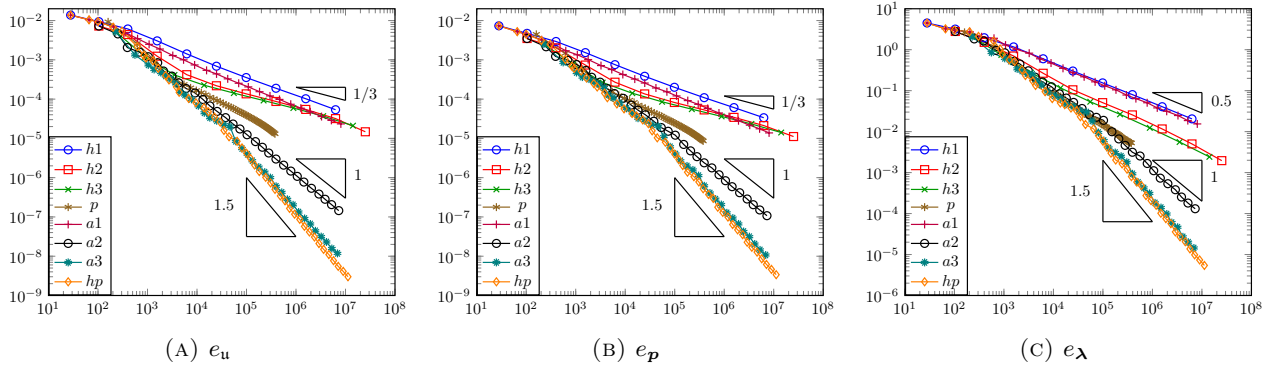


Figure~\ref{BBS_fig:error} shows the reduction of the three errors $e_{\myvec{u}}$, $e_{\mymatrix{p}}$ and $e_{\mymatrix{\lambda}}$ separately for the different discretization schemes. The observed experimental order of convergence (EOC) with respect to the degrees of freedom $N$ are collected in Table~\ref{tab:EOC}. First of all, all methods converge, which is in agreement with Theorem~\ref{BBS_thm:convergence_mixed_Lambda_hp} for the uniform methods. Secondly, the EOC of the method based on the uniform $h$-refinement with $p=1$ is 0.43 and in agreement with Theorem~\ref{thm:conver_p1}. In view of the Dirichlet-to-Neumann (DtN) transition singularity in the lower left and right corner the EOC is still better than asymptotically expected. The EOC of the other two methods based on uniform $h$-refinements with $p=2,3$ is $1/3$ and therefore in agreement with Theorem~\ref{BBS_thm:convergence_rates_mixedForm}. As the EOC of the method based on uniform $p$-refinements is roughly $2/3$ we get an indication that the DtN-singularities in the two corners, for which this method has a doubled EOC compared to the those using uniform $h$-refinements, are stronger than the singularity at the free boundary separating the purely elastic deformed region from the region with plastic deformations, see Figure~\ref{BBS_fig:elastic_region}. 

Applying $h$-adaptive refinements we obtain optimal EOC, namely $1/2$, $1$ and $3/2$ for $p=1$, $p=2$ and $p=3$, respectively. Increasing $p$ even further does not increase the EOC  as isotropic refinements have limited capabilities to resolve the singularity of the curved free boundary. The best EOC is obtained by the $hp$-adaptive scheme.


\begin{table}[ht]
\centering
\begin{tabularx}{\textwidth}{X | X X X X X X X X}
&  $h1$ & $h2$ & $h3$ & $p$ & $a1$ & $a2$ & $a3$ & $hp$    \\  \hline
$e_{\myvec{u}}$            & 0.46 & 0.34 & 0.33 & 0.70 & 0.5 & 1.0 & 1.41 & 1.55 \\ 
$e_{\mymatrix{p}}$         & 0.43 & 0.33 & 0.33 & 0.68 & 0.5 & 1.0 & 1.45 & 1.50 \\ 
$e_{\mymatrix{\lambda}}$   & 0.49 & 0.55 & 0.54 & 0.77 & 0.5 & 1.1 & 1.48 & 1.46
\end{tabularx} 
\vspace{0.25cm}
\caption{\it Experimental order of convergence (EOC).}
\label{tab:EOC}
\end{table}


\pagebreak

\bibliographystyle{amsalpha}

\begin{thebibliography}{9}


\bibitem{AINSWORTH2000329}
M.~Ainsworth and D.~Kay, Approximation theory for the $hp$-version finite element method and application to the non-linear laplacian, \emph{Appl.~Numer.~Math.} \textbf{34} (2000) 329–344.


\bibitem{Bammer2022Icosahom}
P.~Bammer, L.~Banz and A.~Schr\"{o}der, $hp$-Finite Elements with Decoupled Constraints for Elastoplasticity, \emph{Spectral and High Order Methods for Partial Differential Equations ICOSAHOM 2020+ 1, Springer} (2023) 141–153.

\bibitem{Bammer2023Posteriori}
P.~Bammer, L.~Banz and A.~Schr\"{o}der, A Posteriori Error Estimates for $hp$-FE Discretizations in Elastoplasticity, \emph{arXiv preprint} (2023).

\bibitem{banz2022priori}
L.~Banz, O.~Hern\'{a}ndez and E.P.~Stephan, A priori and a posteriori error estimates for $hp$-FEM for a Bingham type variational inequality of the second kind, \emph{Comput. Math. Appl.} \textbf{126} (2022) 14–30.

\bibitem{banz2020posteriori}
L.~Banz, M.~Hinterm\"{u}ller and A.~Schr\"{o}der, A posteriori error control for distributed elliptic optimal control problems with control constraints discretized by $hp$-finite elements, \emph{Comput. Math. Appl.} \textbf{80} (2020) 2433–2450.

\bibitem{Banz_2019OptimalControl}
L.~Banz, M.~Hinterm\"{u}ller and A.~Schr\"{o}der, $hp$-finite elements for elliptic optimal control problems with control constraints, in preparation.

\bibitem{banz2018higher}
L.~Banz, B.P.~Lamichhane and E.P.~Stefan, Higher order FEM for the obstacle problem of the $p$-Laplacian -- variational inequality approach, \emph{Comput.~Math.~Appl.} \textbf{76} (2018) 1639-1660.

\bibitem{banz2019hybridization}
L.~Banz, J.~Petsche and A.~Schr\"{o}der, Hybridization and stabilization for $hp$-finite element methods, \emph{Appl.~Numer.~Math.} \textbf{136} (2019) 66-102.

\bibitem{Banz2021abstract}
L.~Banz and A.~Schr\"{o}der, A posteriori error control for variational inequalities with linear constraints in an abstract framework, \emph{J.~Appl.~Numer.~Optim} \textbf{3} (2021) 333--359.

\bibitem{bernardi1992polynomial}
C.~Bernardi, Y.~Maday, Polynomial interpolation results in Soblolev spaces, \emph{J.~Comput.~Appl.~Math.} \textbf{43} (1992) 53-80.

\bibitem{brokate2005quasi}
M.~Brokate, C.~Carstensen and J.~Valdman, A quasi-static boundary value problem in multi-surface elastoplasticity: part 2 -- numerical solution, \emph{Math.~Methods Appl.~Sci.} \textbf{28} (2005) 881-901.

\bibitem{burg2015posteriori}
M.~B\"{u}rg and A.~Schr\"{o}der, A posteriori error control for $hp$-finite elements for variational inequalities of the first and second kind, \emph{Comput.~Math.~Appl.} \textbf{70} (2015) 2783-2802.

\bibitem{byfut2017unsymmetric}
A.~Byfut and A.~Schr\"{o}der, Unsymmetric multi-level hanging nodes and anisotropic polynomial degrees in $h1$-conforming higher-order finite element methods, \emph{Comput.~Math.~Appl.} \textbf{73} (2017) 2092-2150.


\bibitem{carstensen1999numerical}
C.~Carstensen, Numerical analysis of the primal problem of elastoplasticity with hardening, \emph{Numer.~Math.} \textbf{82} (1999) 577-597.

\bibitem{carstensen2006reliable}
C.~Carstensen, R.~Klose and A.~Orlando, Reliable and efficient equilibrated a posteriori finite element error control in elastoplasticity and elastoviscoplasticity with hardening, \emph{Comput.~Methods Appl.~Mech.~Engrg.} \textbf{195} (2006) 2574-2598.

\bibitem{zbMATH06537874}
C.~Carstensen, A.~Schr\"{o}der and S.~Wiedemann, An optimal adaptive finite element method for elastoplasticity, \emph{Numer.~Math.} \textbf{132} (2016) 131-154.

\bibitem{chen1988plasticity}
W.F.~Chen and D.J.~Han, \emph{Plasticity for Structural Engineers}, Springer, 1988.

\bibitem{christensen2002nonsmooth}
P.W.~Christensen, A nonsmooth Newton method for elasoplastic problems, \emph{Comp.~Methods Appl.~Mech.~Engrg.} \textbf{191} (2002) 1189-1219.


\bibitem{Ekeland_1976}
I.~Ekeland and R.~T\'{e}mam, \emph{Convex analysis and variational problems}, North-Holland Publishing Company, 1976.


\bibitem{han1991finite}
W.~Han, Finite element analysis of a holonomic elastic-plastic problem, \emph{Numer.~Math.} \textbf{60} (1991) 493-508.

\bibitem{han1995finite}
W.~Han and B.D.~Reddy, On the finite element method for mixed variational inequalities arising in elastoplasticity, \emph{SIAM J.~Numer.~Anal.} \textbf{32} (1995) 1778-1807.

\bibitem{Han2013}
W.~Han and B.D.~Reddy, \emph{Plasticity. Mathematical Theory and Numerical Analysis}, Springer, 2 edition, 2013.

\bibitem{hild2002quadratic}
P.~Hild and P.~Laborde, Quadratic finite element methods for unilateral contact problems, \emph{Appl.~Numer.~Math.} \textbf{41} (2002) 401-421.


\bibitem{Luksan1998}
L.~Luk\v{s}an and J.~Vl\v{c}ek, A bundle-Newton method for nonsmooth unconstrained minimization, \emph{Math.~Program.} \textbf{83} (1998) 373-391.


\bibitem{Melenk2005}
J.M.~Melenk, $hp$-interpolation of nonsmooth functions and an application to $hp$-a posteriori error estimation, \emph{SIAM J.~Numer.~Anal.} \textbf{43} (2005) 127-155.


\bibitem{ovcharova2017coupling}
N.~Ovcharova and L.~Banz, Coupling regularization and adaptive $hp$-BEM for the solution of a delamination problem, \emph{Numer.~Math.} \textbf{137} (2017) 303-337.


\bibitem{reddy1987variational}
B.D.~Reddy and T.B.~Griffin, Variational principles and convergence of finite element approximations of a holonomic elastic-plastic problem, \emph{Numer.~Math.} \textbf{52} (1987) 101-117.


\bibitem{sanchez1984estimations}
A.~Sanchez and R.~Arcangeli, Estimations des erreurs de meilleure approximation polynomiale et d'interpolation de Lagrange dans les espaces de Sobolev d'ordre non entier, \emph{Numer. Math.} \textbf{45} (1984) 301-321.

\bibitem{schroder2011mixed}
A.~Schr\"{o}der, Mixed FEM of higher-order for a frictional contact problem, \emph{PAMM} \textbf{11} (2011) 7-10.

\bibitem{zbMATH05872978}
A.~Schr\"{o}der, H.~Blum, A.~Rademacher and H.~Kleemann, Mixed FEM of higher order for contact problems with fiction, \emph{Int.~J.~Numer.~Anal.~Model.} \textbf{8} (2011) 302-323.

\bibitem{schroder2011error}
A.~Schr\"{o}der and S.~Wiedemann, Error estimates in elastoplasticity using a mixed method, \emph{Appl.~Numer.~Math.} \textbf{61} (2011) 1031-1045.


\bibitem{wiedemann2013Adaptive}
S.~Wiedemann, \emph{Adaptive finite elements for a contact problem in elastoplasticity with Lagrange techniques}, Ph.D.~thesis, Humboldt-Universit\"{a}t zu Berlin, Mathematisch-Naturwissenschaftliche Fakult\"{a}t II, 2013.


\end{thebibliography}

\vspace*{-0.25cm}

\end{document}